\documentclass[pdflatex,sn-mathphys-num]{sn-jnl}

\usepackage{graphicx}%
\usepackage{multirow}%
\usepackage{amsmath,amssymb,amsfonts}%
\usepackage{amsthm}%
\usepackage{mathrsfs}%
\usepackage[title]{appendix}%
\usepackage{xcolor}%
\usepackage{textcomp}%
\usepackage{manyfoot}%
\usepackage{booktabs}%
\usepackage{algorithm}%
\usepackage{algorithmicx}%
\usepackage{algpseudocode}%
\usepackage{listings}%

\newtheorem{theorem}{Theorem}

\newtheorem{proposition}[theorem]{Proposition}%
\newtheorem{remark}{Remark}%

\newtheorem{structural}{Structural Condition}

\newtheorem{definition}{Definition}%

\newtheorem{lemma}[theorem]{Lemma}

\newtheorem{notation}{Notation}
\begin{document}

\title[Article Title]{Translating solitons to higher order mean curvature flows in Riemannian products}


\author[1]{\fnm{Jorge H. S. de Lira}}\email{jorge.lira@mat.ufc.br}

\author[2]{\fnm{Rafael R. de Farias}}\email{rafaelfariasmat@ufpa.br}
\equalcont{These authors contributed equally to this work.}

\affil[1]{\orgdiv{Federal University of Ceará}, \orgname{Campus do Pici}, \orgaddress{\street{Bloco 914}, \city{Fortaleza}, \country{Brazil}}}

\affil[2]{\orgdiv{Federal University of Pará}, \orgname{Campus do Guamá}, \orgaddress{\street{ICEN}, \city{Belém}, \country{Brazil}}}


\abstract{In this paper we prove existence and classification results for translating solitons defined as initial conditions for higher order mean curvature flows that are invariant by translations in warped product manifolds   $\mathbb{P}\times_\chi \mathbb{R}$. Here, $\mathbb P$ is a Cartan-Hadamard  manifold endowed with a rotationally symmetric metric and $\chi$ is a radial function defined in $\mathbb{P}$. In this setting, the higher order mean curvature flow is, up to a change of time parameter, given by translations along the factor $\mathbb{R}$ in the warped product. This setting encompasses the cases of translating solitons  in $\mathbb{R}^{n+1}$, $\mathbb{H}^n \times \mathbb{R}$ and $\mathbb{H}^{n+1}$ studied in recent papers. In particular we prove the existence of families of  bowl-type and catenoid-type translating solitons under mild assumptions about the curvature of the warped product. We also describe the asymptotic behavior for those solitons in terms of the geometry at infinity of $\mathbb{P}$. Our assumptions about the ambient metric allow us to control the higher order mean curvature of cylinders and to use them as barriers.}

\keywords{higher order mean curvatures, solitons, extrinsic curvature flows, warped products.}



\maketitle

\section{Introduction}\label{sec1}

We consider product manifolds $\bar M^{n+1} = \mathbb{P}\times \mathbb{R}$ where $(\mathbb{P}, \sigma)$ is a $n$-dimensional complete Riemannian manifold. We suppose that the Riemannian metric $\bar g$ in $\bar M$ is a warped metric of the form 
\begin{equation}
\label{warped-2}
 \bar g = \chi^2 ds^2 + \sigma,   
\end{equation}
for some smooth positive function $\chi: \mathbb P\to \mathbb R$. Here, $s$ is the natural coordinate in the factor $\mathbb{R}$ of the product $\bar M$. The vector field $X=\partial_s$ is a Killing vector field with norm given by $|X|=\chi$. If $\chi$ is constant, $X$ is a parallel vector field. We indicate \eqref{warped-2} by $\bar M =\mathbb{P}\times_{\chi} \mathbb{R}$.
This geometric setting encompasses space forms as $\mathbb{R}^{n+1}$ and $\mathbb{H}^{n+1}(\kappa)$ as well as Riemannian products as $\mathbb{H}^n (\kappa) \times \mathbb{R}$ among other examples.

The main results in this paper concern existence of hypersurfaces $M$ immersed into  $\bar M$ which are initial conditions for  solutions of a higher order mean curvature flow invariant by the translations generated by $X$. Those geometric flows are defined as a one-parameter family of hypersurfaces $\Psi:[0, t^*) \times M \to \bar M$, for some $t^*>0$, satisfying
\begin{equation}
\label{flow}
 (\partial_t \Psi)^\perp = S^\alpha N.
 \end{equation}
Given $m\in\{2,...,n-1,n\}$, the speed of this flow is a power, with exponent $\alpha\in\{1/m,1\}$, of the $m$-curvature of $M$  defined as
\begin{equation}
\label{scalar}
    S = \underset{i_1 < \ldots < i_m}{\sum_{\{i_1, \ldots, i_m\}\subset\{1,...,n\}}}  \lambda_{i_1} (A)\cdot \ldots \cdot \lambda_{i_m} (A).
\end{equation}
The case $m=2$ corresponds to the extrinsic scalar curvature of the immersed hypersurface and $S$ is the Gauss-Kronecker curvature of $M$ when $m=n$. 

The Weingarten map $A$ is computed with respect to the unit normal vector field $N=N|_{\Psi_t(M)}$ in \eqref{flow} which defines an orientation for  $\Psi_t(M)$, for each $t\in [0, t^*)$, where $\Psi_t(\cdot) = \Psi(t, \,\cdot)$. The principal curvatures  $\lambda_i(A)$ in \eqref{scalar} are, by definition, the eigenvalues of $A$. Note that the left-hand side in \eqref{flow}
involves only the orthogonal projection of the variational vector field $\partial_t \Psi$ onto the normal bundle. This means that the flow is defined up to local tangential diffeomorphisms in $M$. In section \ref{prelim} one proves that the condition of invariance by translations has the following infinitesimal expression:
\begin{equation}
\label{sol-eq-tensor}
   II_{-S^\alpha N} + \frac{c}{2} \pounds_{X^T} g = 0,
\end{equation}
where $g$ and $II$ are, respectively, the induced metric and second fundamental form of $\Psi_t (M)$ and $c$ is a constant related to the ratio between the parameters $s$ of the flow of $X$ and the time parameter in \eqref{flow}.
Taking traces in \eqref{sol-eq-tensor} one obtains the higher order mean curvature soliton equation
\begin{equation}
    \label{sol-eq-scalar}
    S^\alpha = c \langle X, N\rangle.
\end{equation}
In what follows  we refer to \eqref{sol-eq-scalar} as the $m$-th mean curvature flow soliton equation. In the Euclidean setting this soliton equation for $\alpha = 1/m$ is invariant by rescaling of the metric.

Our main existence results (see Theorems \ref{ExistenceTheorem-m<nXnonparallel} and \ref{ExistenceTheorem-m=nXparallel})  are obtained under the assumptions that $\mathbb{P}$ is a Hadamard manifold with a rotationally invariant metric
\begin{equation}
    \sigma = dr^2 + \xi^{2}(r) g_{\mathbb S^{n-1}}
\end{equation}
and that $\chi = \chi(r)$, where $r$ is the Riemannian distance in $\mathbb{P}$ from some pole $o\in \mathbb{P}$. Some structural assumptions concerning the behavior of the warping functions $\chi$ and $\xi$ are required to establish the existence of complete examples of solitons. At this respect, we refer the reader to section \ref{Section3}. Theorems \ref{ExistenceTheorem-m<nXnonparallel} and \ref{ExistenceTheorem-m=nXparallel} guarantee the existence of $m$-th mean curvature flow solitons in $\mathbb{R}^{n+1}$, $\mathbb{H}^n \times \mathbb{R}$ and $\mathbb{H}^{n+1}$ since the structural assumptions are valid in these particular cases.

In this setting, we have proved the existence of i) $C^2$ bowl-type solitons, ii) a one-parameter family of complete and $C^1$  solitons to the $m$-th mean curvature flow for $m$ odd, and iii) two non-complete one-parameter families of $C^1$ solitons to the $m$-th mean curvature flow for $m$ even. The existence of such examples corroborates the results in \cite{Ronaldo}, where the authors consider the particular case when $X$ is parallel, specifically the Riemannian products  $\mathbb{R}^{n+1}=\mathbb{R}^n\times \mathbb{R}$ and $\mathbb{H}^n\times\mathbb{R}$.

The main technical strategy in the proof of the existence results consists of using the structural conditions to control the behavior of the $m$-th mean curvature $S_m(r)$ of the Killing cylinders of radii $r>0$ in $\mathbb{P}\times \mathbb{R}$. The Gauss-Kronecker curvature case, $m=n$, is distinct from the others, therefore we consider the cases $m\in \{2,\dots,n-1\}$ and $m=n$ separately. In fact, we analyze the cases  $\alpha=1/m$ and $m$ even; $\alpha=1$ and $m$ even; and $\alpha\in\{1/m,1\}$ and $m$ odd separately. Moreover, for $m=n$ we have different approaches for the cases when $X$ is  parallel, where $\chi\equiv1$ and $S_n(r)\equiv0$; and the cases when $\chi^\prime(r)>0$.

The structure of this paper is as follows. In Section \ref{prelim} we introduce the notions and fundamental equations of $m$-th mean curvature flows invariant by translations and the corresponding translating solitons for those flows. In Sections \ref{Section3} and \ref{Section4}, we model the $m$-th mean curvature soliton equation for rotationally invariant  Killing graphs in terms of a system of first-order differential equations, whose phase portrait analysis is done in Section \ref{Section4} following the techniques introduced in in \cite{Bueno}. Also, we describe the asymptotic behavior of the rotating flow solitons for either $m<n$ or $m=n$ and nonparallel $X$. Finally, in Section \ref{Section5} we give some explicit expressions of some examples of solitons. 

Mean curvature flow solitons have been a major topic of research with a massive body of papers devoted to it. We refer the reader to the encyclopedic approach to the subject in \cite{andrews}. The study of solitons for extrinsic non-linear geometric flows is relatively more recent and some references which were fundamental for our contributions here are \cite{torres}, \cite{Ronaldo}, \cite{Lima2025}, \cite{Pipoli2021}, \cite{Lira2019}, \cite{Lima2023} and \cite{Pipoli2020}.

The research leading to these results received funding from CNPq under Grant Agreement No 306626/2022-5 and from the Coordenação de Aperfeiçoamento de Pessoal de Nível Superior – Brasil (CAPES) – Finance Code 001. All figures were created using Mathematica software.

\section{Preliminares}
\label{prelim}

We consider the variation of a given isometric immersion $\psi: M^n \to \bar M^{n+1}$ defined by the geometric flow $\Psi:(\omega_*, \omega^*)\times M \to \bar
M$ whose speed is the function $F$ of the Weingarten map (more precisely, of its principal values) given by
\begin{equation}
F(h_{ij}) = F({\pmb \kappa}) \doteq S^\alpha,
\end{equation}
where $\alpha \in \{1/m, 1\}$ and ${\pmb \kappa}$ is the vector of the principal curvatures of the Weingarten map whose components are $h_{ij}$. This means that
\begin{align}
    \left( \frac{\partial\Psi}{\partial t} \right)^\perp= S^\alpha N.
\end{align}
Let $\Phi:\mathbb{R}
\times \bar M \to \bar M$  be the
flow generated by $X$ defined in $\mathbb{R}$.
Let  $s$ be the flow parameter in $\Phi$ and define
\begin{equation}
\widetilde \Psi_{t} (x) =\widetilde \Psi(t, x)=
\Phi^{-1}(\sigma(t),\Psi_{t}(x)), \quad x\in M,
\end{equation}
where $\sigma:(\omega_*,\omega^*)\to \mathbb{R}$
is a
reparametrization of the flow lines of $X$ of the form
\[
s=\sigma(t).
\]
Equivalently we can write
\begin{equation}
\label{psiphi} \Psi(t,x) =
\Phi(\sigma(t),\widetilde\Psi(t,x)), \quad (t,x)\in (\omega_*,\omega^*)\times M .
\end{equation}

\begin{definition}\label{def1}
\label{selfsimilar} Let $\bar M^{n+1}$ be a Riemannian manifold
endowed with a Killing vector field $X\in \Gamma(T\bar
M)$. Given an $n$-dimensional Riemannian manifold $M^n$, we say that
the higher order mean curvature flow $\Psi:(\omega_*, \omega^*)\times M \to \bar
M$ is \textit{invariant by translations} generated by $X$ if  there exists an isometric immersion
$\psi:M \to \bar M$ and a reparametrization $\sigma:(\omega_*, \omega^*)\to \mathbb{R}$ of the flow lines of $X$ such that
\begin{equation}
\Psi_{t}(M) =\Phi_{\sigma(t)}(\psi(M)),
\end{equation}
for all $t\in (\omega_*,\omega^*)$, where
 $\Phi:\mathbb{R}\times \bar M\to\bar M$ is the flow generated by $X$. In other terms, $\widetilde\Psi_t(M) = \psi(M)$, for all $t\in (\omega_*, \omega^*)$.
\end{definition}

\begin{remark}
Although  Definition \ref{def1} does not require in principle any special properties of $X$, we will restrict ourselves to Killing fields. 
\end{remark}
\noindent Recall  that $X$ is said to be Killing if the equation
\begin{equation}
\label{killing-1} \pounds_{X}\bar  g = 0
\end{equation}
holds.
In this case, it turns out that each map $\Phi_s = \Phi(s, \cdot):\bar M \to \bar M$, $s\in \mathbb{R}$, is an isometry  in the sense that
\begin{equation}
\label{conformal} \Phi^*_s \bar g =  \bar g.
\end{equation}
We are tacitly assuming that there are no singular points of $X$ in
$\bar M$ by replacing $\bar M$ with a proper open subset of it,
if necessary. Considering  $\mathbb{P}$ as a fixed integral leaf of the distribution orthogonal to $X$, it is convenient to parameterize the flow $\Phi$ by fixing initial conditions on  $\mathbb{P}$. In other words, this means that we consider $\Phi:\mathbb{R}\times \mathbb{P}\to \bar M$ as
a global chart of $\bar M$.  Having fixed this map,  the integral  leaves $\mathbb{P}_s:=\Phi_s(\mathbb{P})$ are identified with the slices $\{s\}\times \mathbb{P}$, $s\in \mathbb{R}$. Recall that we can describe $\bar M$ as the \textit{warped} product $\mathbb{P} \times_\chi \mathbb{R}$ with warped
Riemannian metric given by
\[
\chi^2 \textrm{d}s^2 +  \sigma,
\]
where $\sigma$ is the metric in $\mathbb{P}$ and $\chi = |X|$ is constant along flow lines of $X$.  In terms of this model of $\bar M$ we have
\begin{equation}
\label{conformal-warped}
X = \partial_s .
\end{equation}

Once established this geometric setting and notations, we may deduce some fundamental  equations describing  translating solitons in analytical terms. We start proving some preliminary facts about the interaction between Killing fields and $m$-th mean curvature flows and solitons.

\begin{lemma}
    If $U\in\Gamma(T\bar M)$ then
    \begin{equation}
    \label{Xderivative-killing}
         \bar \nabla_U X = \langle U, \bar \nabla \log \chi \rangle X - \langle U, X \rangle \bar \nabla \log \chi .
\end{equation}
\end{lemma}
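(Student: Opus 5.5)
The identity is tensorial in $U$, since both sides are $C^\infty(\bar M)$-linear in $U$. So I will check it separately on vectors parallel to $X$ and on vectors orthogonal to $X$, i.e.\ tangent to the slices $\mathbb{P}_s$. I will use two facts. First, $X=\partial_s$ is Killing for $\bar g=\chi^2 ds^2+\sigma$, because the metric coefficients do not depend on $s$. Second, $\chi$ is a function on $\mathbb{P}$, so $\bar\nabla\chi$ is tangent to the slices and satisfies $\langle \bar\nabla\chi, X\rangle = X(\chi)=0$.

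\emph{Case $U=X$.} I need to show $\bar\nabla_X X = -\chi^2\,\bar\nabla\log\chi = -\chi\bar\nabla\chi$, since the first term vanishes because $\langle X,\bar\nabla\log\chi\rangle=0$. The Killing equation gives $\langle \bar\nabla_X X, V\rangle = -\langle \bar\nabla_V X, X\rangle = -\tfrac12 V\langle X,X\rangle = -\tfrac12 V(\chi^2) = -\chi\langle V,\bar\nabla\chi\rangle$ for every $V$, which is the claim.

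\emph{Case $U=Y$ tangent to $\mathbb{P}$.} Here the claim reads $\bar\nabla_Y X = Y(\log\chi)\,X$. Extend $Y$ as the lift of a vector field on $\mathbb{P}$, which is independent of $s$, so that $[X,Y]=0$ and hence $\bar\nabla_Y X=\bar\nabla_X Y$. I will compute its components.
\begin{itemize}
\item Along $X$: $\langle\bar\nabla_Y X, X\rangle = \tfrac12 Y(\chi^2)=\chi^2 Y(\log\chi)$, which matches $\langle Y(\log\chi)X, X\rangle$.
\item Along another lifted field $Z$ tangent to $\mathbb{P}$: by the Killing equation, $\langle\bar\nabla_Y X, Z\rangle = -\langle \bar\nabla_Z X, Y\rangle$. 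Since also $[X,Z]=0$, this equals $-\langle\bar\nabla_X Z, Y\rangle$. On the other hand, $\langle \bar\nabla_X Y, Z\rangle + \langle Y,\bar\nabla_X Z\rangle = X\langle Y,Z\rangle = X(\sigma(Y,Z))=0$, because $\sigma$ does not depend on $s$. Combining these, $\langle\bar\nabla_Y X,Z\rangle = \langle \bar\nabla_X Y,Z\rangle = \langle \bar\nabla_Y X, Z\rangle$, which is only a tautology, so I need a different route.
\end{itemize}
Instead, I use that the bilinear form $(V,W)\mapsto\langle\bar\nabla_V X,W\rangle$ is skew, which is the Killing property. Then $\langle\bar\nabla_Y X, Z\rangle$ is skew in $Y,Z$ and also equals $\langle\bar\nabla_X Y,Z\rangle$. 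The Koszul formula for lifted fields with all brackets involving $X$ vanishing gives $2\langle\bar\nabla_X Y,Z\rangle = X\langle Y,Z\rangle + Y\langle X,Z\rangle - Z\langle X,Y\rangle + \langle [Z,X],Y\rangle\ldots = 0$, because $\langle X,Z\rangle=\langle X,Y\rangle=0$ and $\langle Y,Z\rangle$ is $s$-independent. Hence the tangential component vanishes and $\bar\nabla_Y X = Y(\log\chi)X$, which matches the right-hand side, whose second term drops since $\langle Y,X\rangle=0$.

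The main obstacle is the horizontal component computation. I plan to handle it directly via the Koszul formula applied to coordinate (lifted) fields, rather than by juggling the Killing identity, which as shown above leads only to a tautology. Adding the two cases by linearity yields \eqref{Xderivative-killing}.
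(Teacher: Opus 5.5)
Your proof is correct and follows essentially the paper's route: split into the $X$ and $X^\perp$ directions, obtain $\bar\nabla_X X=-\chi\bar\nabla\chi$ and the $X$-component of $\bar\nabla_Y X$ from the Killing equation, and kill the horizontal--horizontal block. The paper does that last step by citing that the slices $\mathbb{P}_s$ are totally geodesic, while you prove it with the Koszul formula for lifted fields. In that Koszul computation, the term $\langle [Y,Z],X\rangle$ does not involve a bracket with $X$, so add that it vanishes because the bracket of two lifts is the lift of their bracket on $\mathbb{P}$, hence orthogonal to $X$; and drop the tautological first attempt from the write-up.
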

\noindent \textit{Proof}. Given $\textrm{u},\textrm{v}\in\Gamma(T\bar M)$, one has
\begin{align*}
    & \langle \bar\nabla_{{\textrm{u}}} X, {\textrm{v}}\rangle =\chi^{-4}\langle {\textrm{u}}, X\rangle \langle {\textrm{v}}, X\rangle \langle \bar\nabla_{X} X, X\rangle \\
    & \ +\langle \bar\nabla_{{\textrm{u}}^\perp} X, {\textrm{v}}^\perp\rangle + \chi^{-2}\langle {\textrm{u}}, X\rangle \langle \bar\nabla_{X} X, {\textrm{v}}^\perp\rangle + \chi^{-2}\langle {\textrm{v}}, X\rangle \langle \bar\nabla_{{\textrm{u}}^\perp} X, X\rangle.
\end{align*}
Here and in what follows, $\bar\nabla$ denotes the Riemannian connection in $(\bar M, \bar g)$. Since $\langle \bar\nabla_X X, V \rangle = - \langle \bar\nabla_V X, X \rangle = -(1/2)\langle \bar\nabla \chi^2 , V \rangle$ we have that
\[
\bar\nabla_X X = -\frac{1}{2}\bar\nabla \chi^2 \quad \mbox{ and } \quad \langle\bar\nabla_{{\textrm{u}}^\perp} X, X\rangle = \frac{1}{2}\langle \bar\nabla \chi^2, {\textrm{u}}^\perp\rangle.
\]
Using the fact that the leaves perpendicular to $X$ are totally geodesic and that $\chi^2 = |X|^2$ is constant along the flow lines of $X$, 
one concludes that
\begin{align*}
    & \langle \bar\nabla_{{\textrm{u}}} X, {\textrm{v}}\rangle =-\langle {\textrm{u}}, X\rangle \langle \bar\nabla\log\chi, {\textrm{v}}\rangle  + \langle {\textrm{v}}, X\rangle \langle \bar\nabla\log\chi, {\textrm{u}}\rangle.
\end{align*}
This finishes the proof.
\hfill $\square$

\begin{proposition}
\label{soliton-prop} Let $\Psi:(\omega_*, \omega^*)\times M \to
\bar M$ be a $m$-th mean curvature flow invariant by translations generated by the Killing  vector field $X\in \Gamma(T\bar M)$. Then for all
$t\in (\omega_*, \omega^*)$ there exists a constant $c_t$
such that
\begin{equation}
\label{solitonA} c_t X =c_t\Psi_{t *}T + S^\alpha N,
\end{equation}
where $S$ is the $m$-th mean curvature \eqref{scalar} of $\Psi_t= \Psi(t, \cdot\,)$ and
$T\in\Gamma(TM)$ is the pull-back by $\Psi_t$ of the tangential
component of $X$. 
Moreover, 
\begin{equation}
\label{solitonB} II_{-S^\alpha N} + \frac{c_t}{2}\pounds_T g =
0,
\end{equation}
where $g$ is the metric induced in $M$ by $\Psi_t$ and $II_{-S^\alpha N}$ is its second fundamental form in the direction of $-S^\alpha N$.
\end{proposition}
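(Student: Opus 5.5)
Differentiating the relation \eqref{psiphi}, $\Psi(t,x)=\Phi(\sigma(t),\widetilde\Psi(t,x))$, in $t$ gives
\[
\partial_t\Psi = \sigma'(t)\,X\circ\Psi + \Phi_{\sigma(t)*}\,\partial_t\widetilde\Psi .
\]
By Definition \ref{def1}, $\widetilde\Psi_t(M)=\psi(M)$ for every $t$. Hence $\partial_t\widetilde\Psi$ is tangent to $\psi(M)$. Since $\Phi_{\sigma(t)}$ is an isometry mapping $\psi(M)$ onto $\Psi_t(M)$, the vector $\Phi_{\sigma(t)*}\partial_t\widetilde\Psi$ is tangent to $\Psi_t(M)$.

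Taking normal components and using the flow equation, we get
\[
S^\alpha N=(\partial_t\Psi)^\perp=\sigma'(t)\langle X,N\rangle N .
\]
Set $c_t=\sigma'(t)$, which depends only on $t$. Then
\[
c_tX = c_t X^T + c_t\langle X,N\rangle N = c_t\Psi_{t*}T + S^\alpha N,
\]
which is \eqref{solitonA}.

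For \eqref{solitonB}, we differentiate $c_tX$ along $\Psi_t$ in tangent directions $U,V\in\Gamma(TM)$, identifying $U$ with $\Psi_{t*}U$. By \eqref{solitonA},
\[
c_t\bar\nabla_U X = c_t\bar\nabla_U(\Psi_{t*}T) + \bar\nabla_U(S^\alpha N).
\]
Pair this with $V$. Since $\langle N,V\rangle=0$, we have $\langle\bar\nabla_U(S^\alpha N),V\rangle=S^\alpha\langle\bar\nabla_UN,V\rangle=-S^\alpha\langle AU,V\rangle$. This equals $II_{-S^\alpha N}(U,V)$, where $II_\eta(U,V)=\langle\bar\nabla_UV,\eta\rangle$, up to the sign convention for $A$. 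Gauss' formula gives $\langle\bar\nabla_U\Psi_{t*}T,V\rangle=\langle\nabla_UT,V\rangle$. Thus
\[
c_t\langle\bar\nabla_UX,V\rangle = c_t\langle\nabla_UT,V\rangle + II_{-S^\alpha N}(U,V).
\]

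Now symmetrize in $U$ and $V$. The Killing equation \eqref{killing-1} says $\langle\bar\nabla_UX,V\rangle+\langle\bar\nabla_VX,U\rangle=0$; this skew-symmetry is also visible in the formula \eqref{Xderivative-killing}. The terms involving $T$ sum to $\langle\nabla_UT,V\rangle+\langle\nabla_VT,U\rangle=(\pounds_Tg)(U,V)$. The second fundamental form is symmetric, so its two terms give $2\,II_{-S^\alpha N}(U,V)$. Therefore
\[
0 = c_t(\pounds_Tg)(U,V) + 2\,II_{-S^\alpha N}(U,V),
\]
and dividing by $2$ yields \eqref{solitonB}.

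The main obstacle I anticipate is bookkeeping rather than substance. Two points need care. First, the identification of the tangential term requires that $\widetilde\Psi_t$ parametrizes the fixed hypersurface $\psi(M)$ possibly up to tangential reparametrization, and this is exactly what Definition \ref{def1} provides. Second, the sign conventions for $II_{\eta}$ and $A$ must be fixed consistently so that the final identity has the stated signs. Taking the trace of \eqref{solitonB} gives $\mathrm{div}\,T$ on one side. Cross-checking this trace against the scalar equation \eqref{sol-eq-scalar} will confirm the sign choice.
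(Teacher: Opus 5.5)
Your proof is correct and follows essentially the paper's route: differentiate $\Psi=\Phi(\sigma(t),\widetilde\Psi)$ in $t$, use $\widetilde\Psi_t(M)=\psi(M)$ to see that the second term is tangent, and set $c_t=\sigma'(t)$; then differentiate \eqref{solitonA} along $M$, pair with tangent vectors and symmetrize, with the Killing property making the left-hand side vanish (the paper checks this cancellation via the explicit formula \eqref{Xderivative-killing}). Taking normal components directly, as you do, is slightly cleaner than the paper's version, which tacitly treats $\partial_t\Psi$ as purely normal, and your sign hedge is unnecessary, since $\langle\bar\nabla_U(S^\alpha N),V\rangle=\langle\bar\nabla_UV,-S^\alpha N\rangle$ holds exactly.
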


\noindent \textit{Proof.} Differentiating both sides in
(\ref{psiphi}) with respect to $t$ we obtain
\begin{eqnarray}
\frac{d\Psi}{dt}\Big|_{(t,x)}&=&
\frac{\partial\Phi}{\partial s} \Big|_{(\sigma
(t),\widetilde\Psi(t, x))}\frac{d\sigma}{dt}\Big|_{t}
+\Phi_{\sigma(t)*}(\widetilde\Psi(t, x))\frac{d\widetilde\Psi}{dt}\Big|_{(t, x)}\nonumber\\
& = & X(\Psi(t,
x))\frac{d\sigma}{dt}\Big|_{t}+\Phi_{\sigma(t)*}(\widetilde\Psi(t,
x))\frac{d\widetilde\Psi}{dt}\Big|_{(t, x)},\label{XT}
\end{eqnarray}
where $\Phi_\sigma = \Phi(\sigma, \cdot).$ Since $\Psi$ is a higher order mean curvature flow invariant by translations generated by $X$, there exists an isometric
immersion $\psi:M\to \bar M$ such that $\Psi(0,\,\cdot\,)=\psi$
and
$\widetilde\Psi_{t} (M) = \psi(M)$
for all $t\in (\omega_*,\omega^*)$.  This 
implies that
\[
\Phi_{\sigma(t)*}(\widetilde\Psi(t,
x))\frac{d\widetilde\Psi}{dt}\Big|_{(t, x)} \in T_{\Psi
(t, x)} \Psi_t(M).
\]
We conclude that for all $t\in (\omega_*,\omega^*)$ the
tangential component of $\frac{d\sigma}{dt} X$ onto
$\Psi_t(M)$ is given by
\[
X^\top(\Psi(t,x))\frac{d\sigma}{dt}\Big|_{t} =
-\Phi_{\sigma(t)*}(\widetilde\Psi(t,x))\frac{d\widetilde\Psi}{dt}\Big|_{(t,x)}
\]
where the superscript $\top$ denotes tangential projection.
We note that the  expression
\begin{equation}
c_t \Psi_{t*}T(t,x) =
-\Phi_{\sigma(t)*}(\widetilde\Psi(t,x))\frac{d\widetilde\Psi}{dt}\Big|_{(t,x)}.
\end{equation}
defines a vector field $T(t,\cdot)\in \Gamma(TM)$,
for each $t\in (\omega_*,\omega^*)$, where $c_t = \frac{d\sigma}{dt}\big|_t$. We conclude from (\ref{XT}) that
\begin{equation}
\label{soliton-quase}
S^\alpha N|_{\Psi(t,x)} =
c_t X(\Psi(t,x))
-c_t\Psi_{t*}T(t,x).
\end{equation}
Then, we rewrite (\ref{soliton-quase}) in the form
\begin{equation}
\label{soliton-tau}
c_t X|_{\Psi_t} = 
c_t\Psi_{t *}T_t   +  S^\alpha N|_{\Psi_\tau} 
\end{equation}
where $T_t(x)= T(t,x)$.

Next, for a fixed $t$,  one denotes by  $g$ and $\nabla$, respectively, the induced metric and connection of the immersion
$\Psi_t$. Hence,  it follows from  (\ref{soliton-tau}) that
\begin{equation}
\label{nablaX} c_t\bar\nabla_{\Psi_* U} X = c_t\Psi_{*}\nabla_U T
+ c_t (\bar\nabla_{\Psi_* U} \Psi_*T)^\perp +\bar\nabla_{\Psi_*U}
S^\alpha N.
\end{equation}
Taking the normal projection in both sides one has
\begin{equation*}
\label{nablaX-bis} c_t(\bar\nabla_{\Psi_* U} X)^\perp = c_t (\bar\nabla_{\Psi_* U} \Psi_*T)^\perp +\bar\nabla^\perp_{\Psi_*U} S^\alpha N.
\end{equation*}
Since $X$ is a Killing vector field we have by \eqref{Xderivative-killing} that
\begin{equation}
    \label{killing-eq}
    \bar\nabla_{\Psi_* U} X = - \langle \Psi_*U , X\rangle \bar\nabla \log \chi + \langle \bar\nabla \log\chi, \Psi_* U\rangle X,
\end{equation}
where $\chi = |X|$. Hence, it follows from  \eqref{nablaX} that
\begin{align*}
   &  c_t (\bar\nabla_{\Psi_* U} \Psi_*T)^\perp +\bar\nabla^\perp_{\Psi_*U} S^\alpha N = -c_t \langle \Psi_*U , X\rangle \bar\nabla^\perp \log \chi + c_t \langle \bar\nabla \log\chi, \Psi_* U\rangle X^\perp\\
    & \,\, =-c_t \langle \Psi_*U , X\rangle \bar\nabla^\perp \log \chi +  \langle \bar\nabla \log\chi, \Psi_* U\rangle S^\alpha N .
\end{align*}
Moreover, combining \eqref{nablaX} and \eqref{killing-eq}
 one has
 \begin{align*}
     & 2 II_{-S^\alpha N} (U, V) + c_t \pounds_{T} g (U,V) = c_t \langle \bar\nabla_{\Psi_* U} X, \Psi_* V\rangle + c_t \langle \bar\nabla_{\Psi_* V} X, \Psi_* U\rangle\\
     & \,\, = - \langle \Psi_*U , X\rangle \langle\bar\nabla \log \chi, \Psi_* V\rangle + \langle \bar\nabla \log\chi, \Psi_* U\rangle \langle X, \Psi_* V\rangle\\
     & \,\,\,\, - \langle \Psi_*V , X\rangle \langle\bar\nabla \log \chi, \Psi_* U\rangle + \langle \bar\nabla \log\chi, \Psi_* V\rangle \langle X, \Psi_* U\rangle =0.
 \end{align*}
 Since the Killing equation implies that $X$ is divergence-free, this completes the proof of Proposition \ref{soliton-prop}. 
 
 \hfill $\square$

Motivated by the above geometric setting, we define a general notion of higher order mean curvature flow soliton  with respect to a given Killing vector field $X\in \Gamma(T\bar M)$ as in Definition \ref{DefinitionM-meanCurvatureFlow}.

\begin{definition}
\label{DefinitionM-meanCurvatureFlow} Let $M^n$ be a $n$-dimensional Riemannian manifold. An isometric immersion   $\psi: M\to \bar M$ oriented by a unit normal vector field is a \textit{$m$-th mean curvature flow soliton} with respect to the Killing field $X$ in $\bar M$ if its $m$-th mean curvature $S$  satisfies the equation
\begin{equation}\label{soliton-translating}
     S^\alpha = c \langle X, N\rangle,    \end{equation}
     for some constant $c>0$ and $\alpha \in \{1, 1/m\}$. We alternatively refer to $m$-th mean curvature flow solitons in this context as {\bf translating solitons}. 
\end{definition}

With a slight abuse of notation, we also say that
the hypersurface  $\psi(M)$ itself is a soliton to the $m$-th mean curvature flow {\textrm{(}}with respect to the vector field $X${\textrm{)}}.

We observe that equation \eqref{solitonA} is enough to deduce the following important consequences that we have considered in Proposition \ref{soliton-prop} in the context of extrinsic geometric flows.

\begin{proposition}
\label{soliton-prop-2} Let $\psi:M^n\to \bar M^{n+1}$ be a soliton to the $m$-th mean curvature flow with respect to a Killing vector field  $X\in \Gamma(T\bar M)$. Then along $\psi$ we have
\begin{equation}
\label{solitonB-2} II_{-S^\alpha N} + \frac{c}{2}\pounds_{T} g =
0,
\end{equation}
where $g$ is the metric induced in $M$ by $\psi$ and $II_{-S^\alpha N}$
is its second fundamental form of $\psi$ in the direction of $-S^\alpha N$. Here   the vector field $T$ is defined by $\psi_*T = X^\top$.
Furthermore,
\begin{equation}
\label{solitonC-3} \langle \nabla S^\alpha, \,\cdot\rangle N+c\, II(T,\,\cdot)=-c \langle T, \,\cdot\rangle \bar\nabla^\perp \log \chi +  \langle \nabla \log\chi, \,\cdot \rangle S^\alpha N,
\end{equation}
where $II$ is the second fundamental tensor of $\psi$ and $\nabla$ is the Riemannian connection in $M$ induced by $\psi$.
\end{proposition}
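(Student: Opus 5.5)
The plan is to recover the decomposition \eqref{solitonA} from the scalar soliton equation, and then differentiate it exactly as in the proof of Proposition \ref{soliton-prop}.

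Decompose $X$ along $\psi$ into its tangential and normal parts, $X = X^\top + \langle X, N\rangle N$. By definition $X^\top = \psi_*T$, and by \eqref{soliton-translating} we have $c\langle X,N\rangle N = S^\alpha N$. Multiplying by $c$ therefore gives
\begin{equation*}
cX = c\,\psi_*T + S^\alpha N
\end{equation*}
along $\psi$. This is \eqref{solitonA} with the constant $c$ in place of $c_t$.

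Next fix tangent fields $U,V$ on $M$ and differentiate this identity along $\psi_*U$. On the left, $c\bar\nabla_{\psi_*U}X$ is given by \eqref{Xderivative-killing}. On the right we get:
\begin{itemize}
\item from the tangential term, $c\,\psi_*\nabla_U T + c\,II(U,T)$, by the Gauss formula;
\item from the normal term, $U(S^\alpha)N - S^\alpha\psi_*A U$, by the Weingarten formula.
\end{itemize}

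For \eqref{solitonB-2}, take the tangential part of the differentiated identity and pair it with $\psi_*V$. This gives
\begin{equation*}
c\langle\bar\nabla_{\psi_*U}X,\psi_*V\rangle = c\langle\nabla_UT,V\rangle - S^\alpha\langle AU,V\rangle .
\end{equation*}
Now symmetrize in $U,V$. The left side vanishes because the expression in \eqref{Xderivative-killing} is skew-symmetric (the Killing equation). The right side becomes $c\,\pounds_T g(U,V) + 2II_{-S^\alpha N}(U,V)$, using the sign convention of the paper. This yields \eqref{solitonB-2}.

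For \eqref{solitonC-3}, take instead the normal part of the differentiated identity. The right side gives $\langle\nabla S^\alpha,U\rangle N + c\,II(T,U)$. On the left, \eqref{Xderivative-killing} gives
\begin{equation*}
-c\langle \psi_*U,X\rangle\bar\nabla^\perp\log\chi + c\langle\bar\nabla\log\chi,\psi_*U\rangle X^\perp .
\end{equation*}
Three substitutions finish it:
\begin{itemize}
\item $\langle \psi_*U, X\rangle = \langle U, T\rangle$;
\item $cX^\perp = S^\alpha N$, by \eqref{soliton-translating};
\item $\langle \bar\nabla\log\chi, \psi_*U\rangle = \langle\nabla\log\chi,U\rangle$, since $\chi$ restricted to $M$ has gradient equal to the tangential projection.
\end{itemize}
This gives \eqref{solitonC-3}.

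The only delicate point is bookkeeping of sign conventions. One must check that $II_{-S^\alpha N}(U,V) = -S^\alpha\langle AU,V\rangle$, up to the paper's convention, matches the term coming from the Weingarten formula. One must also check that the skew-symmetric part coming from \eqref{Xderivative-killing} cancels exactly rather than merely up to a trace. Everything else is a direct repetition of the computation in Proposition \ref{soliton-prop}.
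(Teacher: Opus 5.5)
Your proof is correct and follows essentially the same route as the paper: differentiate $cX = c\,\psi_*T + S^\alpha N$, symmetrize the tangential part and use the Killing equation to get \eqref{solitonB-2}, then read off the normal part using \eqref{Xderivative-killing} to get \eqref{solitonC-3}. The one addition is that you explicitly derive \eqref{solitonA} from the scalar equation \eqref{soliton-translating} via $X = X^\top + \langle X,N\rangle N$, a step the paper takes for granted.
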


\noindent \textit{Proof.}  Using \eqref{solitonA} by a direct computation we have for any tangent vector fields $U, V\in \Gamma(TM)$
\begin{eqnarray*}
& & c\langle \bar\nabla_{\psi_* U} X, \psi_* V\rangle+c\langle \bar\nabla_{\psi_* V} X, \psi_* U\rangle\\
& &\,\, = c\langle \bar\nabla_{\psi_* U} \psi_* T, \psi_* V\rangle+c\langle \bar\nabla_{\psi_* V} \psi_* T, \psi_* U\rangle + \langle \bar\nabla_{\psi_* U} S^\alpha N, \psi_* V\rangle+\langle \bar\nabla_{\psi_* V} S^\alpha N, \psi_* U\rangle\\
& &\,\, = c\langle \nabla_{U} T, V\rangle + c\langle \nabla_{V} T, U\rangle+ 2 II_{-S^\alpha N} (U,V).
\end{eqnarray*}
Since $X$ is a Killing field it follows from \eqref{killing-eq} that
\[
c\langle \bar\nabla_{\psi_* U} X, \psi_* V\rangle+c\langle \bar\nabla_{\psi_* V} X, \psi_* U\rangle = 0,
\]
from which one concludes that
\[
II_{-S^\alpha N} +\frac{c}{2}\pounds_{T} g = 0.
\]
Using the fact that $X$ is Killing, one gets
\begin{eqnarray*}
&  -c \langle \Psi_*U , X\rangle \bar\nabla^\perp \log \chi +  \langle \bar\nabla \log\chi, \Psi_* U\rangle S^\alpha N  = c(\bar\nabla_{\psi_* U} X)^\perp  \\
& = c (\bar\nabla_{\psi_*U}\psi_*T)^\perp + c(\bar\nabla_{\psi_* U} X^\perp)^\perp\\
&  \,\, = c\,II (T, U) + (\bar\nabla_{\psi_*U} (S^\alpha N)^\perp = c\, II (T, U) + \langle \nabla S^\alpha, U\rangle N,
\end{eqnarray*}
what concludes the proof of Proposition 
\ref{soliton-prop-2}. \hfill $\square$

\section{Higher order mean curvature of rotationally invariant hypersurfaces}
\label{Section3}

In this section, we deduce some analytical formulations for $m$-th mean curvature flow solitons in the case when  $\mathbb{P}$ is a Hadamard manifold and its Riemannian  metric $\sigma$ is rotationally invariant in the sense that there exists a pole $o\in \mathbb{P}$ and Gaussian global coordinates $(r, \theta)\in \mathbb{R}^+ \times \mathbb{S}^{n-1}$ centered at $o$ in terms of which $\sigma$ may be written as
\begin{align}
    \sigma = dr^2 + \xi^2(r) d\theta^2,
\end{align}
where $d\theta^2$ is the standard metric in $\mathbb{S}^{n-1}$ and $\xi$ is a smooth positive function that extends smoothly as $r\to 0^+$. We also suppose that $\chi= |X|$ depends (smoothly) only on $r=\operatorname{dist}_P (o, x)$, that is, $\chi(x) = \chi(r(x))$. In sum, the metric $\bar g$ in $\bar M = \mathbb{P} \times_\chi \mathbb{R}$ has a doubly warped structure described in terms of the ``cylindrical'' coordinates $(r, \theta, s)$ as
\begin{equation}
\label{riem-amb-met}
    \bar g = dr^2 + \xi^2(r) d\theta^2 + \chi^2(r) ds^2.
\end{equation}

From now on,  we assume the validity of the following  structural assumptions about $\chi$ and $\xi$.

\begin{structural}
\label{StructuralConditionChi1} 
The positive function $\chi(r)$ is  non-decreasing; either $\chi$ is constant {\rm(}that could be fixed as identically $1$ up to some rescaling in $\mathbb{R}${\rm)} or $\chi(r) \to  \chi_\infty\in (1,+\infty)$ or $\chi(r)\to +\infty$ as $r\to \infty$.
\end{structural}

\begin{structural}
\label{StructuralConditionxi2}
The function  $\xi'(r)/\xi(r)$ is decreasing and the function $(\xi'(r)\chi'(r))/(\xi(r)\chi(r))$ is non-increasing.
\end{structural}

\begin{structural}
 \label{StructuralConditionrto0}
As $r\to0$, the functions $\xi$ and $\chi$ behave  as 
        \begin{align*}
            \xi(r)=r+O(r^{3}), \\
            \chi(r)\equiv 1 \mbox{ or }\chi(r)=1+\frac{r^2}{2}+O(r^4).
        \end{align*}
\end{structural}

\begin{structural}
\label{StructuralConditionrtoinfty}
The functions $\xi$ and $\chi$ behave asymptotically {\rm(}as $r\to+\infty${\rm)} according to one of the following models: 
        \begin{itemize}
            \item{Euclidean model $\mathbb{R}^{n+1}$}, when
            \begin{align}
                \xi(r) = r + O(r^{-1}),  \ \ \ \chi(r)=\chi_\infty + O(r^{-1}), \quad \mbox{ for some }\quad \chi_\infty \in [1,+\infty).
            \end{align}
\item{Riemannian  product $\mathbb{H}^{n}\times\mathbb{R}$ model}, when
            \begin{align}
                \xi(r)=e^r+O(r^{-1}), \ \ \ \chi(r) = \chi_\infty + O(r^{-1}), \quad \mbox{ for some }\quad \chi_\infty \in [1,+\infty).
            \end{align}
            \item{Hyperbolic model $\mathbb{H}^{n+1}$}, when
            \begin{align}
                \xi(r), \,\chi(r)=e^r+O(r^{-1}).
            \end{align}
        \end{itemize}
\end{structural}


\begin{definition}
    \label{notation}
    If the warping functions $\xi$ and $\chi$ behaves asymptotically, as $r\to+\infty$, according to the Euclidean, Riemannian product  or hyperbolic models, we say that the metric asymptotes Euclidean, Riemannian product or hyperbolic models, respectively. If the metric asymptotes either Euclidean model or Riemannian product model, we say that $X$ is asymptotically parallel.
\end{definition}

Recall that $X=\partial_s$ is Killing vector field (in fact, a parallel vector field if $\chi$ is constant) with  
\[
\bar\nabla_{\partial_r} X = \frac{\partial_r \chi}{\chi} X = \frac{\chi'(r)}{\chi(r)} X \quad \mbox{ and }\quad
\bar\nabla_X X  =  -\chi(r) \chi'(r) \partial_r.
\]
Moreover we have
\[
\langle \bar\nabla_{\partial_{\theta^i}} X, X\rangle = \langle \bar\nabla_{\partial_{\theta^i}} X, \partial_r\rangle = \langle \bar\nabla_{\partial_{\theta^i}} X, \partial_{\theta^j}\rangle = 0,
\]
\[
\bar\nabla_{\partial_r} \partial_r = 0 \quad \mbox{ and} \quad \bar\nabla_{\partial_{\theta^i}} \partial_r = \frac{\xi'(r)}{\xi(r)}\partial_{\theta^i}.
\]
A rotationally invariant hypersurface $M$ in $\bar M$ may be parameterized in clylindrical coordinates as 
\[
\Psi(\tau, \vartheta) = (r(\tau), \vartheta, s(\tau)), 
\]
for some parameter $\tau$. Hence the tangent space at a given point at $M$ is spanned by the coordinate vector field 
\[
\partial_\tau \Psi = \dot r \partial_r + \dot s \partial_s
\quad \mbox{ and } \quad
\partial_{\theta^i}\Psi = \partial_{\theta^i}|_{\Psi},
\]
where the superscript $\pmb{^\cdot}$ denotes derivatives with respect to the parameter $\tau$. The induced metric in $M$ has local components in terms of the coordinates $(s, \theta^1, \ldots, \theta^{n-1})$ given by
\begin{align*}
     g_{\tau\tau} = \dot r^2 + \chi^2(r) \dot s^2 =: W^2,\quad 
      g_{\tau i} = 0,\quad
     g_{ij} = \xi^2(r) \theta_{ij},
\end{align*}
where $\theta_{ij}$ are the local components of $d\theta^2$. The  components of the contravariant metric are
\begin{align*}
     g^{\tau\tau} = W^{-2},\quad g^{\tau i} = 0,\quad
      g^{ij} = \xi^{-2}(r) \theta^{ij}.
\end{align*}
An orientation for $M$ is given by
\begin{align}
\label{N-OrientationFormula}
N = \frac{1}{\chi W}\big(-\chi^2(r) \dot s \partial_r + \dot r \partial_s \big),
\end{align}
Now we compute
\begin{align*}
    & \bar\nabla_{\partial_\tau\Psi} (\chi W N) = -\frac{d}{d\tau} (\chi^2 \dot s) \partial_r + \frac{d}{d\tau}\dot r \partial_s - \chi^2 \dot s \bar\nabla_{\partial_\tau \Psi} \partial_r + \dot r \bar\nabla_{\partial_\tau\Psi} \partial_s\\
    & = -\frac{d}{d\tau} (\chi^2 \dot s) \partial_r + \frac{d}{d\tau}\dot r \partial_s + \big(- \chi^2 \dot s^2 \bar\nabla_{\partial s}\partial_r + \dot r^2 \bar\nabla_{\partial_r}\partial_s + \dot r \dot s \bar\nabla_{\partial_s}\partial_s \big)\\
    & = -\frac{d}{d\tau} (\chi^2 \dot s) \partial_r + \frac{d}{d\tau}\dot r \partial_s + \big(\dot r^2 - \chi^2 \dot s^2\big) \frac{\chi'(r)}{\chi(r)}\partial_s  - \dot r \dot s \chi(r) \chi'(r) \partial_r.
\end{align*}
Therefore the component of the second fundamental form of $M$ in the direction of $\partial_\tau \Psi$ is given by
\begin{align*}
  &  h_{\tau\tau} = -\langle \bar\nabla_{\partial_\tau \Psi} N, \partial_\tau \Psi\rangle \\
  & \,\, = \frac{1}{\chi W} \Big(\dot r \frac{d}{d\tau} (\chi^2 \dot s) -\chi^2 \dot s \frac{d}{d\tau}\dot r +  \dot r^2 \dot s \chi(r) \chi'(r) - \dot s\big(\dot r^2 - \chi^2 \dot s^2\big) \chi(r)\chi'(r)  \Big).
\end{align*}
Hence, we have
\begin{align}
    h_{\tau\tau} = \frac{1}{\chi W} \Big(\dot r \frac{d}{d\tau} (\chi^2 \dot s) -\chi^2 \dot s \frac{d}{d\tau}\dot r +  \chi'\chi^3 \dot s^3  \Big).
\end{align}
Now we compute
\begin{align*}
\bar\nabla_{\partial_{\theta^i} \Psi}(\chi W N) = \bar\nabla_{\partial_{\theta^i}} \big(-\chi^2 \dot s \partial_r + \dot r \partial_s\big) = -\chi^2 \dot s \bar\nabla_{\partial_{\theta^i}}\partial_r =  -\chi^2 \dot s \frac{\xi'(r)}{\xi(r)} \partial_{\theta^i}  .
\end{align*}
Therefore 
\begin{align}
    h_{\tau i} \doteq -\langle \bar\nabla_{\partial_{\theta^i}\Psi} N, \partial_\tau \rangle= 0
\end{align}
and
\begin{align}
    h_{ij} \doteq  -\langle \bar\nabla_{\partial_{\theta^i}\Psi} N, \partial_{\theta^j} \rangle = \frac{1}{\chi W}\chi^2 \dot s \frac{\xi'(r)}{\xi(r)}  g_{ij}
    = \frac{\chi}{W} \dot s \xi \xi^\prime \theta_{ij}.
\end{align}
Therefore the principal values of  the Weingarten map
$h^a_b  = g^{ac} h_{cb}$
of the isometric immersion of $M$ into $\bar M$ are given by
\begin{align}
    \label{k}
    \kappa_\tau \doteq h^\tau_\tau 
    = g^{\tau \tau} h_{\tau \tau} + g^{\tau i} h_{i \tau}
    =g^{\tau \tau} h_{\tau \tau}
    = \frac{1}{\chi W^3} \Big(\dot r \frac{d}{d\tau} (\chi^2 \dot s) -\chi^2 \dot s \frac{d}{d\tau}\dot r +  \chi'\chi^3 \dot s^3 \Big),
\end{align}
in $\partial_r$ direction, and
\begin{align}
\label{kappa}
    \kappa_\theta \doteq  \frac{1}{W}\chi \dot s \frac{\xi'(r)}{\xi(r)},
\end{align}
in any direction of $\mathbb{S}^{n-1}$.  Therefore the $m$-th mean curvature of $M$ is given by
\begin{align*}
    S= {n-1\choose m-1}\kappa_r \kappa_\theta^{m-1} + {n-1\choose m}\kappa_\theta^{m},
\end{align*}
for $m<n$, and
\begin{align}
    S=\kappa_r\kappa_\theta^{n-1},
\end{align}
for $m=n$. Therefore, for $2\leq m<n$,
\begin{align}
\label{S-eq}
    S & = {n-1\choose m-1}\frac{1}{\chi W^3} \bigg(\dot r \frac{d}{d\tau} (\chi^2 \dot s) -\chi^2 \dot s \frac{d}{d\tau}\dot r +  \chi'\chi^3 \dot s^3 \bigg) \bigg( \frac{1}{W}\chi \dot s \frac{\xi'(r)}{\xi(r)}\bigg)^{m-1} \nonumber
    \\
    &+ {n-1\choose m}\bigg( \frac{1}{W}\chi \dot s \frac{\xi'(r)}{\xi(r)}\bigg)^{m} 
\end{align}
and for $m=n$,
\begin{align}
    S=\frac{1}{\chi W^3} \bigg(\dot r \frac{d}{d\tau} (\chi^2 \dot s) -\chi^2 \dot s \frac{d}{d\tau}\dot r +  \chi'\chi^3 \dot s^3 \bigg)\bigg(\frac{1}{W}\chi\dot s \frac{\xi^\prime(r)}{\xi(r)}\bigg)^{n-1}
\end{align}
Now, since
\begin{align}
\label{X.N-Formula}
    \langle X, N\rangle = \frac{\chi}{W}\dot r ,
\end{align}
the soliton equation for $\alpha\in\{1/m,1\}$, that is,
\[
S = c^{1/\alpha} \langle X, N\rangle^{1/\alpha},
\]
may be written as
\begin{align*}
  &   {n-1\choose m-1}\frac{1}{\chi W^3} \bigg(\dot r \frac{d}{d\tau} (\chi^2 \dot s) -\chi^2 \dot s \frac{d}{d\tau}\dot r +  \chi'\chi^3 \dot s^3 \bigg) \bigg( \frac{1}{W}\chi \dot s \frac{\xi'(r)}{\xi(r)}\bigg)^{m-1} \nonumber
    \\
    &+ {n-1\choose m}\bigg( \frac{1}{W}\chi \dot s \frac{\xi'(r)}{\xi(r)}\bigg)^{m} = \bigg( \frac{1}{W}c\chi\dot r \bigg)^{1/\alpha}
\end{align*}
for $2\leq m <n$, and
\begin{align*}
    \frac{1}{\chi W^3} \bigg(\dot r \frac{d}{d\tau} (\chi^2 \dot s) -\chi^2 \dot s \frac{d}{d\tau}\dot r +  \chi'\chi^3 \dot s^3 \bigg)\bigg(\frac{1}{W}\chi\dot s \frac{\xi^\prime(r)}{\xi(r)}\bigg)^{n-1}=\bigg( \frac{1}{W}c\chi\dot r \bigg)^{1/\alpha}
\end{align*}
for $m=n$.

If we assume in the previous calculations that $\tau$ is the arc-length parameter of the profile curve $\alpha$ of $M$ parametrized by
\[
\tau \mapsto (r(\tau), s(\tau)),
\]
then
$\dot r^2 + \chi^2 (r) \dot s^2 = 1,$
that is, $W\equiv 1$. Moreover, we can define a smooth determination of the angle $\phi$ between the tangent to the curve $\alpha$ at a given point and the coordinate direction $\partial_r$. One has
\[
\cos \phi = \langle \dot r \partial_r + \dot s \partial_s, \partial_r\rangle = \dot r. 
\]
In fact, one defines $\phi$ such that
\begin{align}
&     \dot r = \cos \phi , \\
& \chi(r) \dot s = \sin \phi.
\end{align}
Hence one has
\begin{align*}
    \dot r \frac{d}{d\tau} (\chi \dot s) -\chi \dot s \frac{d}{d\tau}\dot r = \cos^2\phi \dot \phi + \sin^2 \phi \dot \phi = \dot \phi.
\end{align*}
Therefore
\begin{align}
  &   \chi(r)\kappa_\tau = \dot r \frac{d}{d\tau} (\chi^2 \dot s) -\chi^2 \dot s \frac{d}{d\tau}\dot r +  \chi'\chi^3 \dot s^3 = \chi(r) \dot \phi + \chi(r) \chi'(r) \dot r^2 \dot s  +  \chi'(r)\chi^3(r) \dot s^3 \nonumber\\
    \nonumber
   &  = \chi(r)\dot \phi  + \chi(r)\chi'(r)\dot s(\dot r^2 + \chi^2\dot s^2) = \chi(r)\dot \phi  + \chi(r)\chi'(r)\dot s  
    =  \chi(r)\dot \phi   + \chi'(r)\sin \phi. \nonumber 
    \end{align}
Hence,
\begin{align}
    \kappa_\tau = \dot \phi + \frac{\chi'(r)}{\chi(r)}\sin \phi.
    \label{kappa-eq}
\end{align}
We also have
\begin{align}
    \label{kappa-eq-2}
    \kappa_\theta = \frac{\xi'(r)}{\xi(r)} \sin \phi.
\end{align}
Therefore, \eqref{S-eq} implies that
\begin{align*}
    & S= 
     {n-1\choose m-1}\bigg(\dot\phi   +\frac{\chi^\prime(r)}{\chi(r)}\sin\phi\bigg) \bigg(  \frac{\xi'(r)}{\xi(r)}\sin\phi\bigg)^{m-1} 
    + {n-1\choose m}\bigg(  \frac{\xi'(r)}{\xi(r)}\sin\phi\bigg)^{m} \\
   &  = {n-1\choose m-1} \left( \frac{\xi'(r)}{\xi(r)}\right)^{m-1}  \sin^{m-1}\phi\, \dot\phi + \sin^{m}\phi \bigg( {n-1\choose m-1}\frac{\chi'(r)}{\chi(r)} + {n-1\choose m} \frac{\xi'(r)}{\xi(r)}\bigg)\bigg( \frac{\xi'(r)}{\xi(r)}\bigg)^{m-1}\cdot
\end{align*}
For the special case $m=n$, we have
\begin{align*}
  &   S 
      = \bigg(\dot\phi + \frac{\chi'(r)}{\chi(r)}\sin\phi\bigg) \bigg(  \frac{\xi'(r)}{\xi(r)}\sin\phi\bigg)^{n-1} \\
& \,\,      = \left(\frac{\xi^\prime(r)}{\xi(r)}\right)^{n-1}\sin^{n-1}\phi \, \dot \phi + 
   \sin^n\phi \frac{\chi^\prime(r)}{\chi(r)}\left(\frac{\xi^\prime(r)}{\xi(r)}\right)^{n-1}\cdot
\end{align*}
If $M$ is the Killing cylinder $\{\Phi(s, x): {\rm dist}_{\mathbb{P}} (o, x) = r\}$ over the geodesic sphere of radius $r$ centered at $o$, we have $\phi\equiv \pi/2$. We conclude from the expression above  that the higher order mean curvature of this cylinder is the term between brackets, that is,  
\begin{align}
\label{Sm-formula-}
   &  S_m(r)=   \bigg[ {n-1\choose m-1}\frac{\chi'(r)}{\chi(r)} + {n-1\choose m} \frac{\xi'(r)}{\xi(r)}\bigg]\bigg( \frac{\xi'(r)}{\xi(r)}\bigg)^{m-1},\quad \mbox{ if } \quad 2\le m \le n-1,\\
\label{S-nformula-}
    & S_n(r)=\frac{\chi^\prime(r)}{\chi(r)}\left(\frac{\xi^\prime(r)}{\xi(r)}\right)^{n-1}\cdot
\end{align}
Rewriting the higher order mean of a rotationally invariant hypersurface $M$ in terms of the radial function $S_m(r)$, one gets
\begin{align}
\label{S-formula}
    & S= {n-1\choose m-1} \left( \frac{\xi'(r)}{\xi(r)}\right)^{m-1}  \sin^{m-1}\phi\, \dot\phi + S_m(r) \sin^{m}\phi ,\quad m\in \{2,\dots,n\}.
\end{align}
We conclude from \eqref{sol-eq-scalar}, \eqref{X.N-Formula} and \eqref{S-formula} that $M$ is a $m$-th mean curvature flow soliton  for $\alpha\in \{1/m, 1\}$ if its cylindrical coordinates satisfy the following first order ODE system:
\begin{align}
\label{Soliton-AngleAlpha}
    \begin{cases}
       & \dot r = \cos \phi\\
       & \chi(r) \dot s = \sin \phi\\
       & {n-1\choose m-1} \left( \frac{\xi'(r)}{\xi(r)}\right)^{m-1}  \sin^{m-1}\phi\, \dot\phi  = (c\chi\cos\phi)^{1/\alpha} - S_m(r) \sin^{m}\phi, \ \ m\in\{2,\dots, n\}.
     \end{cases}
\end{align}

\begin{remark}
    \label{remark-symmetries-reflection-orientation}
    Observe that if $\tau\mapsto(r(\tau), \phi(\tau),s(\tau))$ is a solution of the ODE system \eqref{Soliton-AngleAlpha} for $\tau\in(-\epsilon,\epsilon)$, then
\begin{itemize}
        \item for $m$ even we have that
        \begin{align}
        \label{reflection-symmetry}
            \tau\mapsto (r(\tau),-\phi(\tau),-s(\tau))
        \end{align}
        is also a solution. It corresponds to a reflection of the hypersurface with respect to $\mathbb{P}_0$ followed by a changing sign of orientation. In fact, this reflection does not change sign of $\kappa_\theta$, nor $\kappa_\tau$, while changing sign of orientation changes sign of both $\kappa_\theta$ and $\kappa_\tau$, which implies that $S$ does not change sign
        since $m$ is even. These two opperations change sign of $\langle X,N \rangle$ twice, preserving \eqref{sol-eq-scalar}. We call reflection the map \eqref{reflection-symmetry} or, equivalently, a reflection with respect to some $\mathbb{P}_{s_0}$ followed by a changing sign of orientation.
        \item For $m$ odd we have that
        \begin{align}
        \label{reversion-of-orientation}
            \tau\mapsto (r(-\tau),\phi(-\tau)+\pi,s(-\tau))
        \end{align}
        is also a solution. It corresponds to a changing sign of orientation of the hypersurface. In fact, changing sign of orientation changes sign of $\langle X,N\rangle$ and both $\kappa_\theta$ and $\kappa_\tau$, therefore changes sign of $S$, since $m$ is odd, preserving \eqref{sol-eq-scalar}. We call reversion of orientation the map \eqref{reversion-of-orientation} or, equivalently, a changing sign of orientation.
\end{itemize}
To avoid overwriting, we describe each soliton up to reflection or reversion of orientation in Theorems \ref{ExistenceTheorem-m<nXnonparallel} and \ref{ExistenceTheorem-m=nXparallel}.
\end{remark}

\begin{notation}
\label{notation-Sm}
    In order to unify $S_m$ expressions \eqref{Sm-formula-} and \eqref{S-nformula-}, we set, from now on,
    \begin{align*}
        {n-1 \choose n} =0.
    \end{align*}
    Therefore, the cylindrical $m$-th mean curvature is, for $m=2,...,n$,
    \begin{align}
    \label{Sm-formula-general}
        S_m(r)=   \bigg[ {n-1\choose m-1}\frac{\chi'(r)}{\chi(r)} + {n-1\choose m} \frac{\xi'(r)}{\xi(r)}\bigg]\bigg( \frac{\xi'(r)}{\xi(r)}\bigg)^{m-1}.
    \end{align}
\end{notation}

\begin{remark}
\label{Sm-behavior}
    The Structural Condition \ref{StructuralConditionxi2} implies that $S_m(r)$ is a  non-increasing function, since
    \begin{align*}
        &S_m^\prime = \left[{n-1 \choose m-1} \left(\frac{\xi^\prime\chi^\prime}{\xi\chi}\right)^\prime +2{n-1 \choose m} \frac{\xi^\prime}{\xi}\left(\frac{\xi^\prime}{\xi}\right)^\prime \right]\left(\frac{\xi^\prime}{\xi}\right)^{m-2}
        \\
        & \quad + (m-2)\left[{n-1 \choose m-1} \frac{\chi^\prime}{\chi} +{n-1 \choose m} \frac{\xi^\prime}{\xi}\ \right]\left(\frac{\xi^\prime}{\xi}\right)^{m-2} \left(\frac{\xi^\prime}{\xi}\right)^\prime \leq 0.
    \end{align*}
    It follows from the  Structural Condition \ref{StructuralConditionrto0} that as $r\to 0$, we have
    \begin{align*}
        &S_m(r) = {n-1 \choose m} r^{-m} + O(r^{-(m-1)}) \mbox{ for } m <n, \\
        &S_n(r) = r^{-(n-2)} +  O(r^{-(n-3)}) \mbox{ for } m=n \mbox{ and } X \mbox{ nonparallel}.
    \end{align*}
    The Structural Condition \ref{StructuralConditionrtoinfty} implies that as $r\to+\infty$, we have, in terms of the asymptotic behavior of the metric expressed in definition \ref{notation},
    \begin{itemize}
        \item for the Euclidean model and $m<n$,
        \begin{align*}
            S_m(r)={n-1 \choose m}r^{-m} + O(r^{-(m+1)});
        \end{align*}
        \item for the Riemannian product model and $m<n$,
        \begin{align*}
            S_m(r) = S_\infty + O(r^{-2}),
        \end{align*}
        where $S_\infty$ is a positive constant;
        \item for the hyperbolic model and $m\leq n$,
        \begin{align*}
            S_m(r) = S_\infty + O(e^{-r}),
        \end{align*}
        where $S_\infty$ is a positive constant.
    \end{itemize}
\end{remark}

\section{Main existence results: statement and proof}
\label{Section4}

In the following statement, we encompass existence results for higher-order mean curvature solitons for all different choices of $m$, $\alpha$ and $X$.

\begin{theorem}
\label{ExistenceTheorem-m<nXnonparallel}
  Suppose that the functions $\xi$ and $\chi$ in \eqref{riem-amb-met} satisfy the structural conditions \ref{StructuralConditionChi1} to \ref{StructuralConditionrtoinfty}.  Let $m\in\{2,\dots,n-1\}$ or $m=n$ and $X$ nonparallel. Let $\alpha\in\{1,1/m\}$. Then the  rotationally symmetric $m$-th mean curvature flow solitons  with respect to $X$ and $\alpha$ are classified as follows:
    \begin{itemize}
        \item[(a)] $m$-bowl solitons $\Sigma_0$, which are properly embedded, strictly convex and entire graphs contained in a half-space $\mathbb{P}\times[s_0,+\infty)$, with unbounded height and whose orientation $N$ is such that $\langle X,N\rangle$ is positive. Moreover, $\Sigma_0$ is $C^2$ regular, and the profile curve $s(r)$ satisfies, as $r\to 0$,
    \begin{align}
    \label{bowl-expansion}
        s(r) = s_0 + \frac{1}{2} c^{\frac{1}{m\alpha}} {n\choose m}^{-1/m} \, r^2 + O(r^3).
    \end{align}
    Here, $s_0$ is the minimum height of $\Sigma_0$.
    \item[(b)]
            For $\alpha=1/m$ and $m$ even, there are two one-parameter families $\mathcal{C}^i=\{\Sigma^i_{r_0}; \ r_0>0\}$, $i\in\{1,2\}$, of properly embedded annular $m$-th mean curvature flow solitons with nonempty boundary, with the following properties:
            \begin{itemize}
            \item[1.] For $i=1,2$ and for each $r_0>0$, the soliton $\Sigma^i_{r_0}$ is the graph of an unbounded  function defined in  $\mathbb{P} \backslash B(o,r_0)$ contained in a 
            half-space $\mathbb{P}\times[s_0,+\infty)$, where $s_0$ is the minimum height of $\Sigma_{r_0}^i$.
            \item[2.] Along its boundary, $\Sigma^1_{r_0}$ {\rm(}respectively, $\Sigma^2_{r_0}${\rm)}  is $C^1$ {\rm(}respectively, $C^2${\rm)} and tangent {\rm(}respectively, perpendicular{\rm)} to the horizontal hyperplane $\mathbb{P}_{s_0}$; moreover, the orientation of $\Sigma^1_{r_0}$ {\rm(}respectively, $\Sigma^2_{r_0}${\rm)} given by the normal vector field $N$ satisfies $\langle X,N\rangle >0$ everywhere {\rm(}respectively, $\langle X, N\rangle\ge 0$ everywhere, with equality holding only at $s=s_0${\rm)}.  
            We refer to $\Sigma_{r_0}^2$, $r_0>0$, as translating catenoids.
            \item[3.] For $m=n$, there exists an extra family of conic, proper, embedded and rotationally symmetric solutions $\mathcal{C}^1_n=\{\Sigma^1_{\phi_0}; \, \phi_0 \in (0, \pi/2)\}$, where $\Sigma^1_{\phi_0}$ is the graph of a rotationally invariant unbounded function $s(r)$ in $\mathbb{P}$  satisfying $\tan\phi_0 = s^\prime(0)$. Moreover, $\Sigma^1_{\phi_0}$  is contained in a half space $\mathbb{P}\times[s_0,+\infty)$.
    \end{itemize}
    \item[(c)]
        For $\alpha\in\{1/m,1\}$ and
        $m$ odd, there exists a one-parameter family $\mathcal{C}^3=\{\Sigma^3_{r_1}; \ r_1>0\}$ of properly embedded annular rotational $m$-th mean curvature flow solitons, with the following properties:
        \begin{itemize}
            \item[1.] Each soliton
           $\Sigma_{r_1}^3$  is the the union of two graphs $\Sigma_{r_1}^+$ and $\Sigma_{r_1}^-$ of unbounded functions defined in  $\mathbb{P}\backslash B(o,r_1)$ satisfying $\partial \Sigma^{\pm}_{r_1} = \partial B(o,r_1)\times\{s_1\}\subset \mathbb{P}_{s_1}$ for some $s_1 \in \mathbb{R}$; moreover, $\Sigma_{r_1}^3$ is perpendicular to  $ \mathbb{P}_{s_1}$ along $\partial \Sigma^{\pm}_{r_1}$.
            \item[2.] 
           $\Sigma_{r_1}^3$ is  contained in the half space $\mathbb{P}\times[s_0,+\infty)$, where $s_0$ is the minimum height of $\Sigma^3_{r_1}$ and $s_1-s_0$ depends on $r_1$; moreover,  $\Sigma_{r_1}^3\cap\mathbb{P}_{s_0}=\partial B(o,r_0)\times \{s_0\}$ for some $r_0>r_1$ that also depends on $r_1$. These solitons are $C^2$ hypersurfaces except along this intersection with $\mathbb{P}_{s_0}$.
            \item[3.]
            If $m<n$, for every $r_0>0$ there exists $\Sigma^3_{r_1}$, with $r_1\in(0,r_0)$, as described above. If $m=n$, the same fact holds if and only if  $r_0> r_n$ for some $r_n>0$ that depends on $n$. 
            \item[4.]
            For $m=n$, there exists extra two one-parameter families of conic, proper, embedded and rotationally symmetric solutions, $\mathcal{C}^1_n$ as in (a) and $\mathcal{C}^3_n = \{\Sigma^3_{\phi_0}; \, \phi_0\in (\pi/2, \pi)\}$, where $\Sigma_{\phi_0}^3$ is the graph of a rotationally symmetric function $s(r)$ in $\mathbb{P}$ satisfying $s^\prime(0)=\tan \phi_0$; moreover, there exists a decreasing function $r_0(\phi_0)$ with $0 < r_0(\phi_0)\le r_n$ such that $s(r)$ is increasing and unbounded for $r>r_0(\phi_0)$, is bounded for $r \in [0, r_0(\phi_0)]$ whenever $\phi\not \equiv \pi/2$ (mod $\pi$) and satisfies  $s^\prime(r_0(\phi_0))=0$. 
    \end{itemize}
    \item[(d)]
            For $\alpha=1$ and $m$ even, there exists two one-parameter families of $m$-th mean curvature solitons: a family $\mathcal{C}^1$ whose description is exactly as in (b) above; and the family $\mathcal{C}^4=\{\Sigma^4_{r_1}; \ r_1>0\}$ of properly embedded annular rotationally symmetric $m$-th curvature flow solitons, with the following properties: 
                \begin{itemize}
                \item[1.] For each $r_1>0$, the soliton $\Sigma_{r_1}^4$ is contained in the complement of the cylinder
                 $ B(o,r_1)\times\mathbb{R}$ and on the half space $\mathbb{P}\times[s_0,+\infty)$, where $s_0$ is the minimum height of $\Sigma_{r_1}^4$.
                \item[2.] $\Sigma_{r_1}^4$ is tangent to $\mathbb{P}_{s_0}$ and it is $C^2$-singular along the intersection $\partial B(o, r_0) \times \{s_0\} = \Sigma^4_{r_1}\cap \mathbb{P}_{s_0}$, where $r_0$ depends on $r_1$; $\Sigma_{r_1}^4$ is  orthogonal to $\mathbb{P}_{s_1}$ along the intersection $\partial B(o, r_1) \times \{s_1\} = \Sigma^4_{r_1}\cap \mathbb{P}_{s_1}$, for some $s_1 \in\mathbb{R}$ such that  $s_1 - s_0$ also depends on $r_1$. The soliton $\Sigma_{r_1}^4$ is a $C^2$ hypersurface except at its points of minimum height.
                \item[3.] The orientation $N$ of $\Sigma_{r_1}^4$ is such that $\langle \partial_r,N\rangle \leq 0$, the equality being valid only at the points of $\Sigma^4_{r_1}\cap \mathbb{P}_{s_0}$. 
                \item[4.]
                If $m<n$, for every $r_0>0$ there exists $\Sigma^4_{r_1}\in \mathcal{C}^4$, with $r_1\in(0,r_0)$, as in the configuration described above. If $m=n$, the same fact holds if and only if $r_0 > \rho_n$ for some $\rho_n > 0$ that depends on n.
                \item[5.]
                For $m=n$, there exists extra two one-parameter families of conic, proper, embedded and rotationally symmetric solutions: $\mathcal{C}^1_n$ as in (b); and $\mathcal{C}^4_n = \{\Sigma^4_{\phi_0}; \, \phi_0\in [\pi/2, \pi)\}$, where $\Sigma_{\phi_0}^4$ is the graph of a rotationally symmetric function $s(r)$ in $\mathbb{P}$  satisfying $s^\prime(0)=\tan \phi_0$ and $s^\prime(r_0(\phi_0))=0$; here,   $r_0(\phi_0)$ is a decreasing function of $\phi_0$ with $0 < r_0(\phi_0)\le \rho_n$. The function $s(r)$ is bounded for $\phi\not \equiv \pi/2$ (mod $\pi$) and unbounded otherwise.
    \end{itemize}
    \end{itemize}
\end{theorem}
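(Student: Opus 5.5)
The plan is to reduce everything to a phase-plane analysis of the autonomous planar system obtained from \eqref{Soliton-AngleAlpha} by discarding the $s$-equation. That equation can be integrated afterwards via $\chi(r)\dot s=\sin\phi$. Following \cite{Bueno}, I would work in the phase space $\Theta=\{(r,\phi): r>0,\ \phi\in\mathbb R/2\pi\mathbb Z\}$. After the change of parameter $d\tau/dt = {n-1\choose m-1}(\xi'/\xi)^{m-1}\sin^{m-1}\phi$, the system becomes the regular vector field
\[
r' = {n-1\choose m-1}\Big(\tfrac{\xi'}{\xi}\Big)^{m-1}\sin^{m-1}\phi\cos\phi,\qquad \phi' = (c\chi\cos\phi)^{1/\alpha}-S_m(r)\sin^m\phi .
\]
For $\alpha=1/m$ we have $(c\chi\cos\phi)^m$; for $\alpha=1$ we have $c\chi\cos\phi$. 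The orbits of this field are the profile curves. The parity of $m$ and the value of $\alpha$ decide the sign structure of the right-hand sides, which is why the cases (b), (c), (d) separate; Remark \ref{remark-symmetries-reflection-orientation} lets me restrict to half of the phase space.

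First I would describe the curve $\Gamma=\{\phi'=0\}$, i.e. $(c\chi\cos\phi)^{1/\alpha}=S_m(r)\sin^m\phi$. By Remark \ref{Sm-behavior}, $S_m$ is non-increasing, with $S_m\sim{n-1\choose m}r^{-m}$ as $r\to0$ for $m<n$, and it tends to $0$ or to $S_\infty$ at infinity. Since $\chi$ is non-decreasing by Structural Condition \ref{StructuralConditionChi1}, for each $r$ the equation has a unique solution in $(0,\pi/2)$ and, depending on parity, possibly one in $(\pi/2,\pi)$ and in the lower half. Each such branch is a monotone graph $\phi=\Gamma(r)$ tending to $\pi/2$ as $r\to0$ and staying bounded away from $\pi/2$ at infinity. $\Gamma$ together with the axes $\phi\in\{0,\pm\pi/2,\pi\}$, where $r'=0$ or $\sin\phi=0$, partitions $\Theta$ into monotonicity regions. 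In each region the signs of $r'$ and $\phi'$ are fixed, and standard Poincaré–Bendixson type arguments control the orbits.

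(a) The bowl is the orbit issuing from the boundary point $(0,0)$, where the surface meets the axis horizontally. I would prove existence and uniqueness of a solution with $r(0)=0$, $\phi(0)=0$ by writing the profile as a graph $s(r)$. The singular ODE at $r=0$, using $\xi=r+O(r^3)$ and $\chi=1+O(r^2)$ from Structural Condition \ref{StructuralConditionrto0}, becomes a fixed-point problem for $\phi/r$. Matching leading terms gives $(\phi/r)^m{n\choose m}=c^{1/\alpha}$ when we compare $\phi'\sim$ const, which is \eqref{bowl-expansion}. The orbit then enters the region $0<\phi<\Gamma(r)$ and stays there for all $r$, because the vector field points inward on $\Gamma$ and on $\phi=0$. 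Hence $\phi\in(0,\pi/2)$, so we obtain an entire graph with $s'>0$ and convexity, since both $\kappa_\tau$ and $\kappa_\theta$ are positive. Unbounded height follows from a lower bound on $\phi$ at infinity, obtained from the limiting value of $\Gamma$: it is positive in the product and hyperbolic models, and in the Euclidean model $\phi\sim Cr^{-1}$ still yields $s\to\infty$ by a barrier comparison.

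(b)–(d) For the annular families I would start orbits at boundary points $(r_0,0)$ (horizontal tangency, giving $\mathcal C^1$) and $(r_1,\pm\pi/2)$ (vertical tangency, giving catenoid type). I would then follow each orbit through the monotonicity regions. For $m$ odd the term $\sin^m\phi$ changes sign, so orbits cross $\phi=\pi$ or $0$ and glue two graphs (family $\mathcal C^3$). The points $\phi=0$ are where the ODE \eqref{Soliton-AngleAlpha} degenerates, because of the factor $\sin^{m-1}\phi$, and this produces exactly the $C^1$ but not $C^2$ behavior claimed. For $\alpha=1$ with $m$ even the sign of $\cos\phi$ enters linearly, giving $\mathcal C^4$ instead of $\mathcal C^2$. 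For $m=n$ with $X$ nonparallel, $S_n\sim r^{-(n-2)}$ is weaker near $0$. Orbits can then reach $r=0$ with any angle, which yields the conic families $\mathcal C^1_n,\mathcal C^3_n,\mathcal C^4_n$, and the thresholds $r_n,\rho_n$ come from the limiting orbit through $(0,\pi/2)$. The monotone dependence $r_0(\phi_0)$ follows because orbits do not cross.

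The main obstacle I expect is the global continuation: showing that each orbit is defined for all $r\to\infty$ and does not spiral, return to $r=0$, or accumulate at an equilibrium. This requires the Killing cylinders, i.e. the lines $\phi=\pm\pi/2$, together with $\Gamma$, to act as barriers. Here the monotonicity of $S_m$ from Structural Condition \ref{StructuralConditionxi2} and the asymptotics in Structural Condition \ref{StructuralConditionrtoinfty} are used essentially, and I would handle the three asymptotic models separately.
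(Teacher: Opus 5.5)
Your overall strategy is the paper's: the autonomous system in $(r,\phi)$, the nullcline of $\phi$, and non-crossing of orbits. Orbits start at $(r_0,0)$ and $(r_1,\pm\pi/2)$, are glued at $(r_0,0)$ for $m$ odd, and for $m=n$ the conic families and the thresholds $r_n,\rho_n$ come from orbits ending at $(0,\phi_0)$ and $(0,\pm\pi/2)$.

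Your two local devices are sound, and somewhat cleaner than the paper's:
\begin{itemize}
\item After your time change the field at $\phi\in\pi\mathbb{Z}$ is $(0,\pm(c\chi)^{1/\alpha})\neq 0$. So orbits ending at $(r_0,k\pi)$ exist and are unique for free, replacing the limiting construction of Proposition~\ref{proposition-endpoint-raxis}.
\item With $\phi=ru$ the bowl equation becomes $ru'=F(r,u)$ with $F(0,u_0)=0$ and $\partial_uF(0,u_0)=-m\binom{n}{m}/\binom{n-1}{m-1}=-n$. This gives existence, $C^2$ regularity and \eqref{bowl-expansion} in one stroke, instead of the paper's limit of orbits plus barriers $1-Dr^2$. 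For full uniqueness you still must show that every orbit ending at $(0,0)$ has $\phi/r$ bounded above and away from $0$.
\end{itemize}
One correction: $d\tau/dt\propto\sin^{m-1}\phi$ changes sign across $\phi=0$ exactly when $m$ is even. For $m$ even the crossing is therefore a cusp with reversed arc-length normal. That parity, not the sign change of $\sin^m\phi$, is why pieces are glued only for $m$ odd.

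The genuine problem is that your picture of the nullcline is backwards, and your trapping and height arguments in (a) rest on it. Write the nullcline as $\phi=\Phi_{m,\alpha}(r)$, i.e.\ $f(\cos\Phi_{m,\alpha})=S_m/(c\chi)^{1/\alpha}$. Uniqueness for each $r$ comes from $f$ being increasing (Lemma~\ref{lemma-fdiffeomorphism}). Since $S_m/(c\chi)^{1/\alpha}$ is non-increasing (Remark~\ref{Sm-behavior}; this is where Structural Conditions~\ref{StructuralConditionChi1} and~\ref{StructuralConditionxi2} enter), $\Phi_{m,\alpha}$ is non-decreasing. For $m<n$, $\Phi_{m,\alpha}(r)\sim c^{1/(m\alpha)}\binom{n-1}{m}^{-1/m}r\to 0$ as $r\to0$. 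At infinity $\Phi_{m,\alpha}\to\pi/2$ in the Euclidean and hyperbolic models; it stays away from $\pi/2$ only in the product model with $m<n$. With the shape you describe, coming down from $\pi/2$, the horizontal rightward field on the nullcline would point out of $\{0<\phi<\Phi_{m,\alpha}\}$, so ``points inward'' would be false. The invariance holds precisely because $\Phi_{m,\alpha}$ is non-decreasing. You also have to check that your Briot--Bouquet orbit starts below the nullcline. That is the slope inequality $\binom{n}{m}^{-1/m}<\binom{n-1}{m}^{-1/m}$; for $m=n\ge 3$ it is easier still, since $\Phi_{n,\alpha}\sim c^{1/(n\alpha)}r^{(n-2)/n}\gg r$.

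Consequently the height argument fails as written. Along the bowl $\phi$ increases to $\pi/2$; it is not $\sim Cr^{-1}$. In the Euclidean model $\cos\phi$ is comparable to $\cos\Phi_{m,\alpha}\sim r^{-m\alpha}$. For bounded $\chi$, monotonicity of $\phi$ already gives $s'=\tan\phi/\chi\ge\tan\phi(r_1)/\chi_\infty$. In the hyperbolic model, however, $\chi\sim e^r$, so a positive lower bound on $\phi$ only makes $s'$ integrable. You need $\cos\phi=O(e^{-r})$, e.g.\ that $y=\cos\phi$ eventually stays below $2\cos\Phi_{m,\alpha}\sim 2e^{-r}S_\infty^\alpha/c$. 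This does follow from \eqref{G-ODE-yr}: on that curve the factor $(c\chi)^{1/\alpha}$ makes $y'$ far more negative than $(2\cos\Phi_{m,\alpha})'$. But it must be proved; the paper does not spell it out, and its asymptotic proposition with $h=e^{r^\delta}$ is too weak to give it.

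Your last paragraph names global continuation as the obstacle but does not supply the two estimates the paper actually uses, and neither is a cylinder-barrier argument:
\begin{itemize}
\item[(i)] In quadrants where $\cos^{1/\alpha}\phi$ and $\sin^m\phi$ have opposite signs, orbits reach $\phi\in\pi\mathbb{Z}$ at finite $r$ (Proposition~\ref{proposition-orbitScapingQ}). For example, in $Q_4$ for $m$ odd, $d\phi/dr\ge\binom{n-1}{m-1}^{-1}(\xi/\xi')^{m-1}(c\chi)^{1/\alpha}\cos^{1/\alpha-1}\phi/\sin^{m-1}\phi$, which is bounded below by a positive constant along the orbit.
\item[(ii)] For $m<n$ no orbit ends at $(0,\phi_0)$ with $\phi_0\notin\{0,\pm\pi\}$, because the $\binom{n-1}{m}\tan\phi\,\xi'/\xi$ term in $d\phi/dr$ is not integrable at $r=0$ (Proposition~\ref{propostion-noendpoint}).
\end{itemize}
Fact (ii) is what yields item~3 of (c) and item~4 of (d) for every $r_0>0$ when $m<n$, as opposed to the thresholds $r_n,\rho_n$ when $m=n$.
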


In the following sections, we  must consider existence results for solitons in the cases $\alpha\in\{1/m,1\}$ for $m$ odd and $m$ even separately. In any case, the first order ODE system \eqref{Soliton-AngleAlpha} may be written as
\begin{align}
\label{ODE-Matrix}
    \begin{pmatrix}
    \dot r \\
    \dot \phi
    \end{pmatrix}
    =
    \begin{pmatrix}
    \cos \phi\\
    {n-1 \choose m-1}^{-1} \big(\frac{\xi}{\xi'}\big)^{m-1} (c\chi)^{1/\alpha} \sin\phi\Big(\frac{\cos^{1/\alpha}\phi}{\sin^{m}\phi} -\frac{S_m(r)}{(c\chi)^{1/\alpha}}  \Big)
    \end{pmatrix}
    =: M(r,\phi),
\end{align}
an autonomous system. For each pair $(m,\alpha)$ we define a phase portrait $\Theta_{m,\alpha}$, an open set of  $\{(r,\phi)\in\mathbb{R}_+\times\mathbb{R}; \ \phi \neq k\pi, \, k\in \mathbb{Z}\}$ where the system is $C^2$. Note that $\phi = k\pi$, $k\in \mathbb{Z}$, correspond to the totally geodesic hypersurfaces $\mathbb{P}_s$, $s\in \mathbb{R}$, which are not solitons.

By definition, at an equilibrium point $(r_1,\phi_1)\in \Theta_{m, \alpha}$ one has $M(r_1,\phi_1)=0$, which implies $S_m(r_1) = 0$. In the cases $m<n$ or $m=n$ and nonparallel $X$, no equilibrium point exists and, in particular, there is no translating soliton cylinder.

A straightforward consequence of uniqueness of the Cauchy problem for the first order ODE system \eqref{ODE-Matrix} is that the trace of its orbits  are regular proper $C^2$ curves of $\Theta_{m, \alpha}$. Therefore, for any $(r_0,\phi_0)\in \Theta_{m, \alpha}$, there is a unique $C^2$ orbit $\tau\mapsto\gamma(\tau)=(r(\tau),\phi(\tau))$ such that $\gamma(0)=(r_0,\phi_0)$. Moreover,  the properness condition means  that any orbit $\gamma(\tau) = (r(\tau),\phi(\tau))$ cannot have as endpoints  points $(r_1,\phi_1)$ in the interior of $\Theta_{m, \alpha}$, since at these points one has existence and uniqueness of solutions of \eqref{ODE-Matrix}, and thus any orbit around $(r_1,\phi_1)$ could be extended. Hence,  this properness condition implies that any orbit $\gamma(t)$ is a maximal curve contained in $\Theta_{m, \alpha}$ which has its endpoints at the boundary $\partial \Theta_{m, \alpha}$ and splits $\Theta_{m, \alpha}$ into two disjoint open sets. Therefore, another orbit distinct of $\gamma$  that contains a point in one of those open sets is  entirely contained in this open set, since two distinct orbits do not cross. It follows from \eqref{Soliton-AngleAlpha}  that whenever either $\dot r \neq 0$ or $\dot \phi \neq 0$, these orbits may be written as graphs of functions $r = r(\phi)$ or $\phi = \phi(r)$, respectively,  satisfying 
\begin{align}
\label{F-Big-ODE}
   \frac{dr}{d\phi} =  
     {n-1 \choose m-1} \left(\frac{\xi^\prime(r)}{\xi(r)}\right)^{m-1} (c\chi(r))^{-1/\alpha}\cot \phi\left(\frac{\cos^{1/\alpha}\phi}{\sin^m\phi} - \frac{S_m(r)}{(c\chi(r))^{1/\alpha}}\right)^{-1}
    =: F(r,\phi)
    \end{align}
    or
    \begin{align}
    \label{1/F-Big-ODE}
     \frac{d\phi}{dr} = \frac{1}{F(r,\phi)} = {n-1 \choose m-1}^{-1} \left(\frac{\xi}{\xi^\prime}\right)^{m-1} (c\chi)^{1/\alpha} \tan\phi \left(\frac{\cos^{1/\alpha}\phi}{\sin^m\phi} - \frac{S_m}{(c\chi)^{1/\alpha}} \right) . 
\end{align}
From now on, we denote the regions $(0,\pi/2]$, $[\pi/2,\pi)$, $(-\pi,-\pi/2]$ and $[-\pi/2,0)$ of $\Theta_{m, \alpha}$ by $Q_1$, $Q_2$, $Q_3$ and $Q_4$, respectively. We refer to these regions as quadrants of the phase portrait.

\begin{remark}
    \label{rmk-phasespace}
    For $m$ even and $\alpha=1/m$, observe that $c \chi \cos \phi =c\langle X,N\rangle=S^{1/m}\geq0$, therefore we may consider $\phi\in Q_1\cup Q_4$ and the phase space is given by $\Theta_{m,\alpha}=(0,+\infty)\times (Q_1 \cup Q_4)$. For $m$ even and $\alpha=1$ or $m$ odd, $\cos\phi$ and $S$ have same sign and we consider $\Theta_{m,\alpha}=(0,+\infty)\times (Q_1 \cup Q_2 \cup Q_3\cup Q_4)$.
\end{remark}

In order to proceed with the analysis of the phase portrait and behavior of the orbits, we define the following auxiliary function $f:[0,1)\to[0,+\infty)$
\begin{align}
\label{f-function}
    f(x) = \frac{x^{\frac{1}{\alpha}}}{(1-x^2)^{\frac{m}{2}}}\cdot 
\end{align}
We observe that the definition of $f$ depends on $m$ and $\alpha$, but we prefer to not use indices in its notation, since these parameters are clear from the context. The following lemma will be useful in the proof of the subsequent existence results.

\begin{lemma}
\label{lemma-fdiffeomorphism}
    $f:(0,1)\to(0,+\infty)$ is a strictly increasing diffeomorphism, convex for $\alpha=1/m$ and $\underset{x\to0}{\lim}\, f'(x)=1$ for $\alpha=1$.
\end{lemma}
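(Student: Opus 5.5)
The plan is to analyze the two values of $\alpha$ separately, working directly with the explicit formula \eqref{f-function}. Positivity and smoothness on $(0,1)$ are immediate, since $x^{1/\alpha}>0$ and $(1-x^2)^{m/2}>0$ there. Surjectivity onto $(0,+\infty)$ follows from the limits: $f(x)\to 0$ as $x\to 0^+$, because the numerator tends to $0$ and the denominator to $1$; and $f(x)\to+\infty$ as $x\to 1^-$, because the numerator tends to $1$ and the denominator to $0^+$. Once strict monotonicity with $f'>0$ is shown, the inverse function theorem shows that $f$ is a diffeomorphism onto its image, which by the intermediate value theorem is all of $(0,+\infty)$.

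For monotonicity, I would take the logarithmic derivative:
\[
\frac{f'(x)}{f(x)} = \frac{1}{\alpha x} + \frac{m x}{1-x^2},
\]
which is a sum of positive terms on $(0,1)$. Hence $f'>0$ there. Explicitly,
\[
f'(x)= x^{\frac1\alpha -1}(1-x^2)^{-\frac m2-1}\Big(\tfrac1\alpha(1-x^2)+m x^2\Big).
\]
For $\alpha=1$ this becomes $f'(x)=(1-x^2)^{-\frac m2-1}\big(1+(m-1)x^2\big)$, which tends to $1$ as $x\to0$. This settles the claim about the limit of $f'$ at the origin.

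For $\alpha=1/m$ we have $f(x)=x^m(1-x^2)^{-m/2}=\big(x/\sqrt{1-x^2}\big)^m$, and convexity follows by writing $f=h^m$ with $h(x)=x(1-x^2)^{-1/2}$. The function $h$ is positive and increasing, with $h'=(1-x^2)^{-3/2}$. It is also convex, since $h''=3x(1-x^2)^{-5/2}\ge0$. Since $m\ge2$, the map $y\mapsto y^m$ is convex and increasing on $[0,\infty)$, so the composition $h^m$ is convex. As a direct check, $f''=m(m-1)h^{m-2}h'^2+mh^{m-1}h''\ge 0$, and in fact $f''>0$ on $(0,1)$.

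The only step needing care is the convexity claim, and the factorization through $h$ handles it cleanly. I do not expect a real obstacle anywhere in the argument.
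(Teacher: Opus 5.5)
Your proposal is correct. The limits at $0$ and $1$, the positivity of $f'$, and $\lim_{x\to0}f'(x)=1$ for $\alpha=1$ follow the paper's computation: your logarithmic derivative gives exactly $f'(x)=\frac{1+x^2(m\alpha-1)}{\alpha x(1-x^2)}f(x)$. You are also more explicit than the paper about why $f'>0$ together with the two limits yields a diffeomorphism onto $(0,+\infty)$ (inverse function theorem plus the intermediate value theorem).

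The genuine difference is in the convexity step.
\begin{itemize}
\item The paper writes $f'=gf$ and computes $f''=(g'+g^2)f$ in expanded form for general $\alpha$, then specializes to $\alpha=1/m$.
\item You observe that for $\alpha=1/m$ one has $f=h^m$ with $h(x)=x(1-x^2)^{-1/2}$ positive, increasing and convex. Convexity then follows from composing with the convex increasing power $y\mapsto y^m$, and you confirm it with $f''=m(m-1)h^{m-2}h'^2+mh^{m-1}h''$.
\end{itemize}
Your route is shorter and avoids the bookkeeping. It also gives the correct closed form $f''(x)=\frac{m(m-1+3x^2)}{(x(1-x^2))^2}f(x)$. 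The paper's general expression has a sign slip: the term $\alpha\big(2x^2(1-x^2)(m\alpha-1)-(1-3x^2)(1+x^2(m\alpha-1))\big)$ should be added, not subtracted. That is why its specialization reads $m+1-3x^2$ instead of $m-1+3x^2$. The slip does not affect the lemma, since both quantities are positive on $(0,1)$ for $m\ge 2$, but your formula is the one to trust.
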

\noindent\textit{Proof.} A straightforward computation shows that, for $0<x<1$, $\underset{x\to0}{\lim} \, f(x) = 0$, $\underset{x\to 1}{\lim} \, f(x) = +\infty$ and
\begin{align*}
f'(x)=\frac{1+x^2(m\alpha-1)}{\alpha x(1-x^2)}f(x)=\frac{1+x^2(m\alpha-1)}{\alpha (1-x^2)^{\frac{m+2}{2}}}x^{\frac{1}{\alpha}-1}>0.
    \end{align*}
    Hence, 
    \begin{align*}
     f''(x)& =\big\{ \big(1+x^2(m\alpha-1)\big)^2 \\
    & \quad-\alpha\big(2x^2(1-x^2)(m\alpha-1)-(1-3x^2)(1+x^2(m\alpha-1))\big) \big\}\frac{f(x)}{(\alpha x(1-x^2))^2} 
    \end{align*}
    what implies that
    \begin{align*}
     f''(x)=\frac{m+1-3x^2}{m}\frac{m^2f(x)}{( x(1-x^2))^2}>0 \mbox{ for } \alpha=1/m.
\end{align*}
This finishes the proof. 
\hfill $\square$

\begin{definition}
\label{Phima}
    We define the auxiliary function
\begin{align}
\label{f-Phima}
    \Phi_{m,\alpha}(r) = \arccos \left[f^{-1}\left(\frac{S_m(r)}{(c\chi(r))^{1/\alpha}}\right)\right],
\end{align}
which is a non-decreasing functions of $r$ {\rm(}see Remark \ref{Sm-behavior}{\rm)}. Therefore, we may use the expression
\begin{align}
    \frac{S_m(r)}{(c\chi(r))^{1/\alpha}} = f(\cos \Phi_{m,\alpha})
\end{align}
whenever it is convenient. 
\end{definition}

\begin{notation}
    Let $\gamma$ be an orbit passing through a point $(r_0,\phi_0)$, where $\phi_0$ is in the interior of some quadrant $Q_i$ and $r_0>0$. We call right component of $\gamma\setminus\{(r_0,\phi_0)\}$, and denote it by $\gamma^+$, the component that, for some $\epsilon>0$, $\gamma^+ \cap B_\epsilon((r_0,\phi_0))\subset (r_0,+\infty)\times Q_i$. The other component of $\gamma\setminus\{(r_0,\phi_0)\}$ we call left component and denote it by $\gamma^-$. This is well defined, since $\gamma$ is locally the graph of a function $\varphi(r)$ that satisfies \eqref{1/F-Big-ODE}.
\end{notation}

In order to prove Theorem \ref{ExistenceTheorem-m<nXnonparallel}, we set a list of propositions about orbits in the portrait space.

\begin{proposition}
\label{proposition-orbitTrapped}
    Let $m<n$ or $m=n$ and $X$ nonparallel. Let $\phi_0$ be a point in the interior of a quadrant $Q_i$, $r_0>0$ and $\gamma$ the orbit passing through $(r_0,\phi_0)$. If $\cos^{1/\alpha}\phi$ and $\sin^m\phi$ have same sign in $Q_i$, then the right component of $\gamma\setminus(r_0,\phi_0)$ is contained in $[r_0, +\infty)\times Q_i$ and this component is the graph of a function $\varphi:[r_0,+\infty)\to Q_i$ that satisfies \eqref{1/F-Big-ODE}.
\end{proposition}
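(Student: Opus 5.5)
The plan is to use the graph equation \eqref{1/F-Big-ODE}. In the interior of $Q_i$ we have $\cos\phi\neq 0$, so $\dot r=\cos\phi\neq 0$ and the orbit is locally a graph $\phi=\varphi(r)$. I will then show, by barrier arguments, that along the right component $\varphi$ can neither reach $\partial Q_i$ nor blow up at a finite value of $r$. Standard extension for the smooth ODE \eqref{1/F-Big-ODE} on compact subsets of the open region $(0,+\infty)\times\operatorname{int}Q_i$ then gives a solution on all of $[r_0,+\infty)$. Properness of orbits in $\Theta_{m,\alpha}$, as noted after \eqref{ODE-Matrix}, identifies this graph with $\gamma^+$.

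First I rewrite the right-hand side. Set $K(r)={n-1\choose m-1}^{-1}(\xi/\xi')^{m-1}(c\chi)^{1/\alpha}>0$. By hypothesis $\cos^{1/\alpha}\phi$ and $\sin^m\phi$ have the same sign in $Q_i$, so $\cos^{1/\alpha}\phi/\sin^m\phi=f(|\cos\phi|)$. By Definition \ref{Phima}, $S_m/(c\chi)^{1/\alpha}=f(\cos\Phi_{m,\alpha}(r))$. Hence
\[
\frac{d\varphi}{dr}=K(r)\tan\varphi\,\bigl(f(|\cos\varphi|)-f(\cos\Phi_{m,\alpha}(r))\bigr).
\]
This requires $S_m(r)>0$ and finite for $r>0$. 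For $m<n$ this follows from \eqref{Sm-formula-general} with $\xi'>0$ (Hadamard) and $\chi'\ge 0$. For $m=n$ I will use that $X$ nonparallel forces $\chi'>0$; this is the point I would double check against Structural Conditions \ref{StructuralConditionChi1}--\ref{StructuralConditionrto0}. Granting it, $\Phi_{m,\alpha}(r)\in(0,\pi/2)$ for every $r>0$, and $\Phi_{m,\alpha}$ is non-decreasing. Since $f$ is a strictly increasing diffeomorphism (Lemma \ref{lemma-fdiffeomorphism}), the sign of the bracket is the sign of $\Phi_{m,\alpha}(r)-\hat\varphi$. Here $\hat\varphi\in(0,\pi/2)$ denotes the reference angle of $\varphi$, defined by $|\cos\varphi|=\cos\hat\varphi$.

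Next come the barriers, treating $Q_1$ in detail; the other admissible quadrants reduce to it by the symmetries of Remark \ref{remark-symmetries-reflection-orientation}, or by the same computation with signs tracked through $\tan\varphi$. In $Q_1$, whenever $\varphi<\Phi_{m,\alpha}(r)$ the derivative is positive, and whenever $\varphi>\Phi_{m,\alpha}(r)$ it is negative. Fix $R>r_0$. Because $\Phi_{m,\alpha}$ is non-decreasing, $\varphi$ cannot go below $\min\{\varphi_0,\Phi_{m,\alpha}(r_0)\}>0$ on $[r_0,R]$: at any first crossing of this level we would have $\varphi<\Phi_{m,\alpha}(r)$ and hence $\varphi'>0$. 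Similarly, $\varphi$ cannot exceed $\max\{\varphi_0,\Phi_{m,\alpha}(R)\}<\pi/2$. Thus on $[r_0,R]$ the graph stays in a compact subset of $[r_0,R]\times\operatorname{int}Q_1$, on which \eqref{1/F-Big-ODE} has bounded, Lipschitz right-hand side. The maximal solution therefore extends past every $R$, giving $\varphi$ on $[r_0,+\infty)$.

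Finally, the part I expect to need most care is the sign bookkeeping in the remaining quadrants where the hypothesis holds ($Q_3$ for suitable parities, and $Q_2,Q_4$ in the even cases). There $\tan\varphi$ may be negative, and the barriers become $\pm\Phi_{m,\alpha}$ or $\pi\pm\Phi_{m,\alpha}$. I would handle this by transporting the $Q_1$ argument: via $\phi\mapsto-\phi$ when $m$ is even, and via $\phi\mapsto\phi+\pi$ when $m$ is odd (which also covers $\alpha=1$). I would check each time that the transformed equation keeps the same sign structure, so that $\varphi$ is again trapped between two curves that remain strictly inside the quadrant on compact $r$-intervals.
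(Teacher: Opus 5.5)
Your proposal is correct and follows essentially the paper's route: rewrite \eqref{1/F-Big-ODE} as \eqref{ODE-ffunction(+-)}, read off the sign of $d\varphi/dr$ relative to $\pm\Phi_{m,\alpha}$ (or $-\pi+\Phi_{m,\alpha}$), and conclude that the graph cannot reach $\partial Q_i$, so it extends to $[r_0,+\infty)$. Your explicit barrier levels, the continuation argument on compact $r$-intervals and the symmetry reductions $\phi\mapsto-\phi$, $\phi\mapsto\phi+\pi$ simply make the paper's brief ``sign contradiction'' rigorous, and your $\chi'>0$ caveat for $m=n$ matches the paper's standing convention that nonparallel means $\chi'>0$; the only slip is listing $Q_2$, since within $\Theta_{m,\alpha}$ the relevant quadrants are exactly $Q_1,Q_4$ for $m$ even and $Q_1,Q_3$ for $m$ odd.
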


\noindent \textit{Proof.} Let $\varphi(r)$ be a function defined in a neighborhood of $r_0$ whose graph is contained in $\gamma$. We rewrite \eqref{1/F-Big-ODE}, in this context, as
\begin{align}
\label{ODE-ffunction(+-)}
    \frac{d\varphi}{dr} = {n-1 \choose m-1}^{-1} \left(\frac{\xi}{\xi'}\right)^{m-1} (c\chi)^{1/\alpha} \tan\phi \left[f(|\cos\phi|) - f(\cos\Phi_{m,\alpha})\right].
\end{align}

If $m$ is even and $\alpha=1/m$, $\phi_0$ is in $Q_1$ or $Q_4$ (see Remark \ref{rmk-phasespace}), therefore $d\varphi/dr$ is positive if and only if $0<\varphi(r)<\Phi_{m,\alpha}(r)$, if $\phi_0\in Q_1$; or $-\pi/2\leq \varphi(r) <-\Phi_{m,\alpha}(r)$, if $\phi_0\in Q_4$. Similarly,  $d\varphi/dr$ is negative if and only if $\Phi_{m,\alpha}(r)<\varphi(r)\leq \pi/2$, if $\phi_0\in Q_1$; or $0< \varphi(r) <-\Phi_{m,\alpha}(r)$, if $\phi_0\in Q_4$. See Figure \ref{fig:alpha-m-even}. Suppose that at some $r=r_1>r_0$, the graph of $\varphi$ hits the boundary of $Q_i$. However this implies that, for $r<r_1$ sufficiently close to $r_1$, the sign of $d\varphi/dr$ contradicts what we have just established for it. Therefore $\varphi(r)$ can be extended for all $r\geq r_0$. 

If $m$ even and $\alpha=1$, the quadrants where $\cos \phi_0>0$ are, again, $Q_1$ and $Q_4$ and we proceed in the same way.

If $m$ is odd, the quadrants where $\cos\phi_0>0$ are $Q_1$ and $Q_3$. Therefore, $d\varphi/dr$ is positive if and only if $0<\varphi(r) < \Phi_{m,\alpha}(r)$, if $\phi_0\in Q_1$; or $-\pi<\varphi(r) < -\pi + \Phi_{m,\alpha}(r)$, if $\phi_0\in Q_3$. Similarly, $d\varphi/dr$ is negative if and only if $\Phi_{m,\alpha}(r) < \varphi(r) \leq \pi/2$, if $\phi_0\in Q_1$; or $-\pi + \Phi_{m,\alpha}(r) < \varphi(r) \leq -\pi/2$, if $\phi_0\in Q_3$. See Figure \ref{fig:enter-label}. Suppose that at some $r=r_1>r_0$, the graph of $\varphi$ hits the boundary of $Q_i$. However this implies that, for $r<r_1$ sufficiently close to $r_1$, the sign of $d\varphi/dr$ contradicts what we have just established for it. Therefore $\varphi(r)$ can be extended for all $r\geq r_0$. \hfill $\square$

\begin{proposition}
\label{proposition-orbitScapingQ}
    Let $m<n$ or $m=n$ and $X$ nonparallel. Let $\phi_0$ be a point in the interior of a quadrant $Q_i$, $r_0>0$ and $\gamma$ the orbit passing through $(r_0,\phi_0)$. If $\cos^{1/\alpha}\phi$ and $\sin^m\phi$ have opposite signs in $Q_i$, then the right component of $\gamma\setminus \{(r_0,\phi_0)\}$ is not contained in this set. Therefore, there exists $r_1>r_0$ such that $\gamma^+$ intersects the boundary of $Q_i$ at $r=r_1$.
\end{proposition}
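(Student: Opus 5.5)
We plan to argue by contradiction, exploiting the fact that in such a quadrant the right-hand side of \eqref{1/F-Big-ODE} never vanishes and has a definite sign, with a magnitude bounded below. Let $\varphi$ be the local graph function of $\gamma$ near $(r_0,\phi_0)$, as in the proof of Proposition \ref{proposition-orbitTrapped}. Suppose that $\gamma^+$ stays in the interior of $(r_0,+\infty)\times Q_i$. Since $\cos\phi\neq 0$ there, $\dot r\neq 0$ along $\gamma^+$.

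First we check that $\gamma^+$ remains a graph over $r$ and that $\varphi$ is monotone. When $\cos^{1/\alpha}\phi$ and $\sin^m\phi$ have opposite signs, the quotient $\cos^{1/\alpha}\phi/\sin^m\phi$ is negative. By Remark \ref{Sm-behavior}, in the cases $m<n$ or $m=n$ with $X$ nonparallel we have $S_m(r)>0$; we only really need $S_m\ge 0$. Hence the bracket
\[
\frac{\cos^{1/\alpha}\phi}{\sin^m\phi}-\frac{S_m(r)}{(c\chi)^{1/\alpha}}
\]
is strictly negative and never vanishes. Therefore $F(r,\phi)$ in \eqref{F-Big-ODE} is finite and $\gamma^+$ is globally the graph of $\varphi$ over some interval $[r_0,R)$. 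The sign of $d\varphi/dr$ is the sign of $-\tan\phi$. A case check over the admissible quadrants shows that $\varphi$ moves monotonically away from $\pm\pi/2$ and towards the horizontal boundary $\phi=k\pi$ of $Q_i$. The admissible quadrants are $Q_2$ and $Q_4$ for $m$ odd, and $Q_2$ and $Q_3$ for $m$ even with $\alpha=1$; for $m$ even with $\alpha=1/m$ no such quadrant exists, by Remark \ref{rmk-phasespace}. For example, in $Q_2$ we have $\tan\phi<0$, so $\varphi$ increases towards $\pi$. In particular $|\cos\varphi(r)|\ge|\cos\phi_0|>0$ along $\gamma^+$.

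Next we need a uniform lower bound for the speed. Since both terms of the bracket have the same sign, dropping the $S_m$ term gives
\[
\Big|\frac{d\varphi}{dr}\Big|\ \ge\ {n-1\choose m-1}^{-1}\Big(\frac{\xi}{\xi'}\Big)^{m-1}(c\chi)^{1/\alpha}\,\frac{|\cos\varphi|^{1/\alpha-1}}{|\sin\varphi|^{m-1}}
\ \ge\ {n-1\choose m-1}^{-1}\Big(\frac{\xi(r_0)}{\xi'(r_0)}\Big)^{m-1}(c\chi(r_0))^{1/\alpha}\,|\cos\phi_0|^{1/\alpha-1}=:\delta>0 .
\]
Here we use three facts. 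The function $\xi/\xi'$ is increasing by Structural Condition \ref{StructuralConditionxi2}. The function $\chi$ is non-decreasing by Structural Condition \ref{StructuralConditionChi1}. Finally $|\sin\varphi|\le 1$ and $|\cos\varphi|\ge|\cos\phi_0|$; the exponent $1/\alpha-1$ is $\ge 0$, and it equals $m-1$ when $\alpha=1/m$.

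To conclude, note that $\varphi$ is monotone with $|\varphi'|\ge\delta$ and is confined to an interval of length at most $\pi/2$. Hence $R\le r_0+\pi/(2\delta)<\infty$. By the properness of orbits discussed after \eqref{ODE-Matrix}, the graph cannot end at an interior point of $\Theta_{m,\alpha}$. It also cannot end with $\varphi$ approaching $\pm\pi/2$, because $\varphi$ moves away from those values. So $\varphi(r)\to k\pi$ as $r\to R=:r_1$, which means that $\gamma^+$ reaches the boundary of $Q_i$ at $r=r_1$.

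The main point requiring care is the lower bound on the speed when $\alpha=1/m$: there the factor $|\cos\varphi|^{m-1}$ could degenerate near $\pm\pi/2$. This is exactly why the monotonicity of $\varphi$ away from $\pm\pi/2$ has to be established first.
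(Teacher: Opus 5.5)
Your proof is correct, and it takes a somewhat different route from the paper's.

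\textbf{The paper's route.} It argues by contradiction, assuming $\gamma^+$ remains in $[r_0,+\infty)\times Q_i$ for all $r\ge r_0$. It then shows that the total variation $\int_{r_0}^{R}|\varphi'|\,dr$ is unbounded as $R\to+\infty$, splitting into cases:
\begin{itemize}
\item for $m<n$, or $m=n$ with hyperbolic asymptotics, it discards the $\cos^{1/\alpha}\phi/\sin^m\phi$ term and keeps the $S_m$ term, so that $\log\xi(R)$ and $\log\chi(R)$ diverge;
\item for $m=n$ it keeps the $\cos^{1/\alpha}\phi/\sin^m\phi$ term and uses the divergence of $\int(\xi/\xi')^{n-1}(c\chi)^{1/\alpha}\,dr$, together with a lower bound on $|\cot\varphi|$.
\end{itemize}

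\textbf{Your route.} You use the second mechanism uniformly for every $(m,\alpha)$. Having first shown that $\varphi$ moves monotonically away from $\pm\pi/2$, you bound $|\tan\varphi|\,|\cos\varphi|^{1/\alpha}/|\sin\varphi|^{m}=|\cos\varphi|^{1/\alpha-1}/|\sin\varphi|^{m-1}$ below by $|\cos\phi_0|^{1/\alpha-1}$. This gives the pointwise bound $|\varphi'|\ge\delta$.

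\textbf{What this buys.}
\begin{itemize}
\item It needs only $S_m\ge 0$ and the monotonicity in Structural Conditions~\ref{StructuralConditionChi1} and~\ref{StructuralConditionxi2}.
\item It never invokes the asymptotic models, and it also covers the parallel case $S_n\equiv 0$.
\item It yields the explicit estimate $r_1\le r_0+\pi/(2\delta)$.
\item It avoids a weak point in the paper's first case. There the factor $|\tan\varphi|$ is silently dropped when passing to the logarithmic lower bound. Yet that factor tends to $0$ precisely in the scenario $\varphi(r)\to k\pi$ as $r\to+\infty$, which is what has to be excluded. In your estimate $\tan\varphi$ is absorbed into the first term, so the problem does not arise.
\end{itemize}

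\textbf{One point to make explicit.} You also need $\xi'>0$. It holds because $\mathbb{P}$ is Hadamard, so $\xi''\ge 0$, and $\xi'(0)=1$. Without positivity, the decrease of $\xi'/\xi$ would not give the increase of $\xi/\xi'$ that your bound relies on.
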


\noindent \textit{Proof.} Suppose that $\gamma^+$ is contained in $[r_0,+\infty)\times Q_i$. Then $\gamma^+$ is the graph  of a function $\varphi:(r_0,+\infty) \to Q_i$, since, in this context,
\begin{align*}
    &\frac{d\varphi}{dr} = {n-1 \choose m-1} \tan\varphi 
    \\
    &\qquad \times\left\{
    -\left(\frac{\xi}{\xi^\prime}\right)^{m-1} (c\chi)^{1/\alpha} f(|\cos \varphi|) -
    \left[
    {n-1 \choose m-1}\frac{d}{dr}\log \chi + {n-1 \choose m}\frac{d}{dr}\log\xi
    \right]
    \right\}
\end{align*}
has constant sign (see Notation \ref{notation-Sm}). If either $m<n$ or $m=n$ and the metric asymptotes the hyperbolic model, we have
\begin{align*}
    \frac{\pi}{2} &\geq |\varphi(R) - \varphi(r_0)| = \int_{r_0}^{R}\left|\frac{d\varphi}{dr}\right| \, dr  \\ 
    &> 
    {n-1 \choose m-1}
    \left[
    {n-1 \choose m-1}\log\frac{\chi(R)}{\chi(r_0)} + {n-1 \choose m}\log\frac{\xi(R)}{\xi(r_0)}
    \right] \underset{R\to+\infty}{\longrightarrow} +\infty,
\end{align*}
a contradiction. If $n=m$, $\alpha=1$ and $X$ parallel or nonparallel, we have
\begin{align*}
    \left| \frac{d\varphi}{dr} \right| \geq \left(\frac{\xi}{\xi^\prime}\right)^{n-1} (c\chi) \frac{1}{|\sin^{n-1}\varphi|} 
    \end{align*}
    what implies that
    \begin{align*}
    \left| \int_{\varphi_0}^{\varphi(R)} \sin^{n-1}\phi \, d\phi\right| \geq \int_{r_0}^{R} \left(\frac{\xi}{\xi^\prime}\right)^{n-1} (c\chi) \, dr \underset{R\to+\infty}{\longrightarrow} +\infty,
\end{align*}
a contradiction. If $m=n$, $\alpha=1/n$ and $n$ is even, $\cos^{1/\alpha}\phi$ and $\sin^m \phi$ are positive, therefore have the same sign (see Remark \ref{rmk-phasespace}). If $m=n$, $\alpha=1/n$ and $n$ is odd, $\cos^{n}\phi$ and $\sin^n \phi$ have opposite signs in $Q_2$ and $Q_4$. In both cases, $d\varphi/dr > 0$. By monotonicity, $\varphi$ asymptotes $\phi_1$ for some $\phi_0<\phi_1\leq \pi$, if $\varphi\in Q_2$, or $\phi_0<\phi_1\leq 0$, if $\varphi \in Q_4$. In both cases, there exists $C>0$ such that $|\cot^{n-1} \varphi| >C$, therefore
\begin{align*}
    \phi_1-\phi_0\geq\varphi(R)-\varphi(r_0)=\int_{r_0}^{R}\frac{d\varphi}{dr} \, dr > C \int_{r_0}^{R} \left(\frac{\xi}{\xi^\prime}\right)^{n-1} (c\chi)^n \, dr \underset{R\to+\infty}{\longrightarrow} + \infty,
\end{align*}
a contradiction.

\hfill $\square$

\begin{proposition}\label{proposition-endpoint-raxis}
    Given $r_0>0$, there exist unique orbits in $\mathbb{R}_+\times Q_1$, $\mathbb{R}_+\times Q_2$, $\mathbb{R}_+\times Q_3$ and $\mathbb{R}_+\times Q_4$ whose one of its endpoints is $(r_0,0)$, $(r_0,\pi)$, $(r_0,-\pi)$ and $(r_0,0)$, respectively.
\end{proposition}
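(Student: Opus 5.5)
The plan is to remove the singularity at $\phi=k\pi$ by describing orbits near the $r$-axis as graphs $r=r(\phi)$ rather than $\phi=\phi(r)$. That is, I will work with \eqref{F-Big-ODE} instead of \eqref{1/F-Big-ODE}. Consider first the endpoint $(r_0,0)$ with $r_0>0$. Multiplying numerator and denominator of $F$ by $\sin^m\phi$, I rewrite it as
\begin{align*}
F(r,\phi)={n-1 \choose m-1}\left(\frac{\xi'(r)}{\xi(r)}\right)^{m-1}(c\chi(r))^{-1/\alpha}\,\frac{\cos\phi\,\sin^{m-1}\phi}{\cos^{1/\alpha}\phi-\dfrac{S_m(r)}{(c\chi(r))^{1/\alpha}}\sin^{m}\phi}.
\end{align*}
Since $r_0>0$, the factors $\xi'/\xi$, $\chi$ and $S_m$ are smooth near $r_0$. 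At $\phi=0$ the denominator equals $1$. So on a small rectangle $[r_0-\delta,r_0+\delta]\times(-\varepsilon,\varepsilon)$ the function $F$ extends to a $C^1$ function with $F(r,0)=0$, using $m\ge2$.

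By Picard--Lindel\"of, the Cauchy problem $dr/d\phi=F(r,\phi)$, $r(0)=r_0$, then has a unique $C^1$ solution on some interval $(-\varepsilon',\varepsilon')$. For $\phi\neq0$, its graph is a piece of an orbit of \eqref{ODE-Matrix} in $\Theta_{m,\alpha}$, because the two systems have the same trajectories wherever $\dot\phi\neq0$. The part with $\phi\in(0,\varepsilon')$ extends to a maximal orbit in $\mathbb{R}_+\times Q_1$ having $(r_0,0)$ as an endpoint. The part with $\phi\in(-\varepsilon',0)$ does the same in $\mathbb{R}_+\times Q_4$. For $(r_0,\pm\pi)$ the argument is identical. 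By Remark \ref{rmk-phasespace}, $Q_2$ and $Q_3$ only occur when $m$ is odd or $\alpha=1$. In those cases $\cos^{1/\alpha}\phi$ equals $\cos\phi$ or $\cos^m\phi$, which is a smooth function equal to $\pm1$ at $\phi=\pm\pi$. So the denominator is again nonvanishing there, and $F$ is again $C^1$ with $F(r,\pm\pi)=0$.

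For uniqueness, let $\gamma$ be any orbit in $\mathbb{R}_+\times Q_1$ with endpoint $(r_0,0)$. Near this endpoint, $\dot\phi$ has the sign of $\sin\phi\,(\cos^{1/\alpha}\phi-\dots)\approx\sin\phi$, so it is nonzero. Hence $\phi$ is strictly monotone along the end of $\gamma$, and this end is the graph of a function $r(\phi)$ on $(0,\varepsilon)$ solving $dr/d\phi=F$. The endpoint condition gives $r(\phi)\to r_0$ as $\phi\to0^+$. Because $F$ is continuous up to $\phi=0$, this $r$ extends to a $C^1$ solution of the Cauchy problem with $r(0)=r_0$. By Picard--Lindel\"of uniqueness it coincides with the solution constructed above, so $\gamma$ is the orbit already found. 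The same reasoning applies in $Q_2$, $Q_3$ and $Q_4$.

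The main obstacle is this uniqueness step. One must check that $\dot\phi$ does not vanish near the endpoint, uniformly for $r$ near $r_0$. This holds because $S_m(r)\sin^m\phi\to0$ while $\cos^{1/\alpha}\phi\to\pm1$. One must also check that the limiting value of $r$ is attained continuously, which follows from the boundedness of $F$ near $(r_0,0)$.
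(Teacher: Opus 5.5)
Your argument is correct, and it takes a genuinely different route from the paper.

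The paper never regularizes the equation at the axis. For existence, it takes orbits through points $(r_k,\epsilon_k)$ near the axis and uses that $\dot\phi\to+\infty$ uniformly as $\phi\to0$ to show that each one reaches $\phi=0$ at some $\tilde r_{k+1}\in(r_0,r_k)$. It then passes to the limit of the monotone sequence $\rho_k(\phi)$ and checks that the limit still solves \eqref{F-Big-ODE}. For uniqueness, it shows $\partial F/\partial r<0$ near the axis and compares two hypothetical solutions $r_\pm(\phi)$ with the same limit $r_0$.

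You instead clear $\sin^m\phi$ and observe that $F$ is smooth on a full rectangle around $(r_0,k\pi)$, with $F(r,k\pi)=0$, because the denominator equals $\cos^{1/\alpha}(k\pi)=\pm1$ there. Existence and uniqueness then both reduce to Picard--Lindel\"of for $dr/d\phi=F$, $r(k\pi)=r_0$.

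Your approach buys several things. It is shorter and fully rigorous. It uses only $r_0>0$ and the smoothness and positivity of $\xi,\xi',\chi$ near $r_0$, whereas the paper's sign of $\partial F/\partial r$ relies on $\chi'\ge0$ and on $\xi'/\xi$ being decreasing. It also gives more for free. The $Q_1$ and $Q_4$ orbits ending at $(r_0,0)$ (and, modulo $2\pi$, the $Q_2$ and $Q_3$ orbits ending at $(r_0,\pm\pi)$) are the two halves of a single smooth curve $\phi\mapsto r(\phi)$ with $dr/d\phi=0$ on the axis. Moreover, the endpoint depends smoothly on the orbit. This underlies Remark \ref{rmk-continuityofendpoints-orbits} and the later gluing of two orbits at a common endpoint $(r_0,0)$.

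The paper's limiting scheme, for its part, stays inside the open phase space. It is the same kind of argument that is unavoidable at $(0,0)$ in Lemma \ref{orbit-bowl-Xparallel} and case (a), where $\xi'/\xi$ and $S_m$ blow up and your regularization is not available.

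One small slip: near the axis, $\dot\phi$ is a positive factor times $\cos^{1/\alpha}\phi/\sin^{m-1}\phi$, up to a small error. Its sign is therefore not always that of $\sin\phi$: it is positive near $0$ in $Q_4$ for $m$ odd, and negative near $\pi$ in $Q_2$. You only use $\dot\phi\neq0$, so nothing breaks.
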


\noindent \textit{Proof.} We present the proof of the proposition only in the case $\mathbb{R}_+\times Q_1$ since it can be easily adapted to the other cases. Given $r_1>r_0$, define $\delta_1=r_1-r_0$. Since $\lim_{\phi\to0}\dot\phi=+\infty$ is uniform in $r_0 \leq r \leq r_1$ by \eqref{F-Big-ODE}, fixing $R>0$, there exists $\tilde \epsilon>0$ such that $\dot\phi>R$ for all $(r,\phi)\in [r_0, r_1]\times(0,\tilde\epsilon]$.
Define $\epsilon_1=\min\{\tilde\epsilon, \delta_1 R\}$ and let $\gamma_1$ be the orbit that contains $(r_1,\epsilon_1)$. Let $\rho(\phi)$ be the function corresponding to $\gamma_1$, then $\rho'=\dot \rho / \dot \phi = \cos\phi / \dot\phi >0$, by \eqref{ODE-Matrix}. Suppose that there exists $\phi_1 \in [0, \epsilon_1)$ such that $\rho(\phi_1)=r_0$. Then $\phi_1\leq\phi\leq \epsilon_1$ implies  that $ \rho(\phi)\in[r_0,r_1]$ and the following chain of inequalities hold
\begin{align*}
    \frac{d\rho}{d\phi} = \frac{\dot r}{\dot \phi} = \frac{\cos\phi}{\dot \phi}< \frac{1}{R} \Rightarrow
    r_1-\rho(\phi) <\frac{\epsilon_1-\phi}{R} = \frac{\epsilon_1}{R}-\frac{\phi}{R}\leq\delta_1 - \frac{\phi}{R}=r_1-r_0-\frac{\phi}{R},
\end{align*}
   from what follows that
    \begin{align*}
        r_0=\rho(\phi_1)>r_0 + \frac{\phi_1}{R},
    \end{align*}
        a contradiction. We define $\tilde r_2 \in (r_0, r_1)$ as $\tilde r_2 = \lim_{\phi\to0} \rho(\phi)$, which exists by monotonicity. Then $(\tilde r_2,0)$ is one of the endpoints of $\gamma_1$. Now we proceed inductively, defining

\[
    r_k=\min\{\tilde r_k, r_0+2^{-k}\}, \ \ \ \delta_k = r_k - r_0, \ \ \ \mbox{and } \ \ \epsilon_k = \min\{\tilde\epsilon. \delta_k R\}.
\]
Therefore $0<\delta_k \leq 2^{-k}$. We define $\gamma_k$ as the orbit passing through $(r_k,\epsilon_k)$, that converges to $(\tilde r_{k+1}, 0)$ and we have
\[
    r_0 \leq \tilde r_{k+1} < r_k.
\]
Therefore $\tilde r_k \searrow r_0$. If $\rho_i:[0,\epsilon_1]\to [r_0,r_1]$ is the function whose graph is contained in the trace of $\gamma_i$, the sequence $\rho_1(\phi), \rho_2(\phi), \dots$ is monotone, since the orbits do not cross each other, and the functions are $C^2$ in $(0,\epsilon_1)$. Therefore, $\rho_i(\phi)$ converges pointwisely to a function $\rho(\phi)$. By \eqref{F-Big-ODE}, the convergence is uniform in each compact of $(0,\epsilon_1)$, therefore $\rho(\phi)$ is continuous. By \eqref{F-Big-ODE},
the derivatives $\rho_1^\prime(\phi), \rho_2^\prime(\phi), \dots$ converge uniformly in each compact of $(0,\epsilon_1)$, from which we conclude that $\rho(\phi)$ is differentiable and satisfies \eqref{F-Big-ODE}. Finally, the graph of $\rho(\phi)$ is contained in the trace of the desired orbit.

To show uniqueness, we compute
\begin{align*}
   &  \frac{\partial F}{\partial r} = -{n -1 \choose m-1} \cot \phi \\
    & \qquad \times \frac{(c\chi)^{1/\alpha}\big(\frac{\xi} {\xi^\prime}\big)^{m-2}\frac{\cos^{1/\alpha}\phi}{\sin^m\phi}\Big[\frac{1}{\alpha}\frac{\chi^\prime}{\chi} \frac{\xi}{\xi^\prime} + (m-1)\big(\frac{\xi}{\xi^\prime}\big)^\prime \Big] -  \Big[{n-1\choose m-1} \big(\frac{\chi^\prime}{\chi}\big)^\prime + {n-1 \choose m}\big(\frac{\xi^\prime}{\xi}\big)^\prime\Big]}{\left[ (c\chi)^{1/\alpha}\left(\frac{\xi}{\xi^\prime}\right)^{m-1}\frac{\cos^{1/\alpha}\phi}{\sin^m \phi} - \left[{n-1\choose m-1} \frac{\chi^\prime}{\chi} + {n-1 \choose m}\frac{\xi^\prime}{\xi}\right]\right]^2}\cdot 
\end{align*}
Since $\cos^{1/\alpha}\phi\, \sin^{-m} \phi$ is arbitrarily large, we can assume that $\frac{\partial F}{\partial r}<0$ for $(r,\phi) \in [r_0,r_1]\times(0, \epsilon_1]$.

let $r_-,r_+:[0,\epsilon_1)\to[r_0,+\infty]$ be smooth functions such that satisfy \eqref{F-Big-ODE} and $\lim_{\phi\to0}r_-(\phi)=r_0=\lim_{\phi\to0}r_+(\phi)$. Suppose that $r_{-}\neq r_{+}$. The uniqueness of orbits implies that $r_{-}(\phi)\neq r_{+}(\phi)$ for all $\phi>0$. Without loss of generality, assume that $r_{-}(\phi)<r_{+}(\phi)$ for all $\phi>0$. But this is a contradiction, since
\begin{align*}
    r_{-}^\prime(\phi)=F(r_-(\phi),\phi)>F(r_+(\phi),\phi)=r_{+}^\prime(\phi)   \mbox{ for all }   \phi>0.
\end{align*}
In the remaining cases, $\mathbb{R}_+\times Q_2$, $\mathbb{R}_+\times Q_3$ or $\mathbb{R}_+\times Q_4$, considering $(r_0,\pi)$, $(r_0,-\pi)$ or $(r_0,0)$ respectively, we set $r_1>r_0$ or $0<r_1<r_0$ if $\dot r / \dot\phi$ is positive or negative, respectively, and the proof is essentially the same. \hfill $\square$

\begin{remark}
\label{rmk-continuityofendpoints-orbits}
    Fixing $k \in \{-1,0,1\}$ and $0<r_1<r_2$, let $\gamma_1$ and $\gamma_2$ be orbits in the same connected component of $\Theta_{m,\alpha}$ which have one of their endpoints given by $(r_1, k\pi)$ and $(r_2,k\pi)$, respectively. If $r_1<r<r_2$, the orbit $\gamma$, in the same connected component, whose one endpoint is $(r,k\pi)$ is such that it is between $\gamma_1$ and $\gamma_2$, since two different orbits do not intersect. Conversely, an orbit $\gamma$ between $\gamma_1$ and $\gamma_2$ must have one of its endpoints $(r,k\pi)$ for some $r\in(r_1,r_2)$ for the same reason. We conclude that the endpoint changes continuously with the change of the orbit and vice versa. 
\end{remark}

\begin{proposition}
\label{propostion-noendpoint}
    For $m<n$, there is no orbit with an endpoint in $(r,\phi)=(0,\phi_0)$, where $\phi_0 \not\in \{-\pi,0,\pi\}$.
\end{proposition}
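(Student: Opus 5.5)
We argue by contradiction. Suppose an orbit $\gamma$ of \eqref{ODE-Matrix} has an endpoint at $(0,\phi_0)$ with $\phi_0\notin\{-\pi,0,\pi\}$, so that $\sin\phi_0\neq 0$. Near $r=0$ the orbit is then the graph of a function $\varphi(r)$ on some interval $(0,\delta)$ with $\varphi(r)\to\phi_0$ as $r\to 0^+$. This holds because \eqref{1/F-Big-ODE} is regular away from $\phi\in\{k\pi\}$, and $\dot r=\cos\phi$ has constant sign near $\phi_0$ when $\phi_0\neq\pm\pi/2$; the case $\phi_0=\pm\pi/2$ is treated below. We rewrite \eqref{1/F-Big-ODE} as
\[
\frac{d\varphi}{dr}={n-1\choose m-1}^{-1}\tan\varphi\left[\Big(\frac{\xi}{\xi'}\Big)^{m-1}(c\chi)^{1/\alpha}\frac{\cos^{1/\alpha}\varphi}{\sin^m\varphi}-\Big(\frac{\xi}{\xi'}\Big)^{m-1}S_m(r)\right].
\]
We then use the asymptotics of Remark \ref{Sm-behavior} together with Structural Condition \ref{StructuralConditionrto0}. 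As $r\to0$ we have $\xi/\xi'=r+O(r^3)$ and $S_m(r)={n-1\choose m}r^{-m}+O(r^{-(m-1)})$, so $(\xi/\xi')^{m-1}S_m(r)={n-1\choose m}r^{-1}+O(1)$. Since $m<n$, the coefficient ${n-1\choose m}$ is positive, and this term is exactly the obstruction. The first term in the bracket is bounded as $r\to0$, because $\chi\to1$, $\sin\varphi\to\sin\phi_0\neq0$ and $(\xi/\xi')^{m-1}\to0$.

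Suppose first that $\phi_0\neq\pm\pi/2$. Then $\tan\varphi\to\tan\phi_0\neq0$, and
\[
\frac{d\varphi}{dr}=-\frac{{n-1\choose m}}{{n-1\choose m-1}}\,\frac{\tan\phi_0+o(1)}{r}+O(1).
\]
Integrating from $r$ to $\delta$ gives $|\varphi(\delta)-\varphi(r)|\geq C\log(\delta/r)\to+\infty$ as $r\to0$, with $C>0$. This contradicts $\varphi(r)\to\phi_0$. Equivalently, the total variation of $\varphi$ along $\gamma$ near $r=0$ is infinite, while $\varphi$ stays inside a single quadrant, so it cannot exceed $\pi/2$.

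The case $\phi_0=\pm\pi/2$ is the main obstacle. There $\tan\varphi$ blows up and $\dot r=\cos\phi$ vanishes, so $\gamma$ need not be a graph over $r$. In this case I would use $\phi$ as the parameter through \eqref{F-Big-ODE}. Near $(0,\pm\pi/2)$,
\[
\frac{dr}{d\phi}=F(r,\phi)\approx -\frac{{n-1\choose m-1}}{{n-1\choose m}}\,r\cot\phi\,(1+o(1)),
\]
since the $S_m$ term dominates the bracket. Hence $|d\log r/d\phi|\le C|\cot\phi|$, which is bounded near $\phi=\pm\pi/2$. So $\log r$ stays bounded along any piece of the orbit with $\phi$ in a neighbourhood of $\pm\pi/2$, and $r$ cannot tend to $0$ there.

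To make this rigorous, one has to rule out oscillation of $\phi$ around $\pm\pi/2$ while $r\to0$. I would do this with the $\dot\phi$ equation in \eqref{Soliton-AngleAlpha}, which near $\phi=\pm\pi/2$ gives $\dot\phi\approx -\frac{{n-1\choose m}}{{n-1\choose m-1}}\,r^{-1}\sin\phi$. Thus $\dot\phi$ has a definite sign and is large near $r=0$, so $\phi$ is monotone in $\tau$ and $\phi$ is a legitimate parameter. In $\tau$, one has $\dot r=\cos\phi$, so $r$ changes by at most the arc length. Combining this with $d\log r/d\phi$ bounded on a neighbourhood where $\phi$ is monotone yields the contradiction.
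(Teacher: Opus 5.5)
Your argument is correct and uses the same mechanism as the paper. The cylinder term $(\xi/\xi')^{m-1}S_m=\binom{n-1}{m-1}\chi'/\chi+\binom{n-1}{m}\xi'/\xi\sim\binom{n-1}{m}r^{-1}$ is nonzero exactly because $m<n$, and it makes $\int|d\varphi/dr|\,dr$ diverge logarithmically at $r=0$. The paper gets this by bounding $d\varphi/dr$ with its coefficients evaluated at $\phi_1=\varphi(\epsilon)$ and integrating $(\log\xi)'$ explicitly, whereas you use the asymptotic expansion directly.

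Your separate treatment of $\phi_0=\pm\pi/2$ goes beyond the paper, which tacitly assumes the orbit is a graph over $r$ there. You use $r\dot\phi\to-\binom{n-1}{m}\binom{n-1}{m-1}^{-1}\sin\phi$ to make $\phi$ a monotone parameter, and then bound $d\log r/d\phi$.
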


\noindent \textit{Proof}. Suppose that $\gamma$ is an orbit with one endpoint at $(0,\phi_0)$. Let us suppose that $\phi_0\in Q_1$. The proof is essentially the same for $\phi_0$ in $Q_2$, $Q_3$ or $Q_4$. For $\epsilon>0$ sufficiently small, the trace of $\gamma$ contains the graph of a decreasing function $\varphi:[0,\epsilon]\to Q_1$, where $\lim_{r\to0} \varphi(r) = \phi_0$. Let $\varphi(\epsilon)=\phi_1$. By \eqref{1/F-Big-ODE} and using the fact that $\varphi$ is a decreasing function,
one has
\begin{align*}
     \frac{d\varphi}{dr} =&  
     {n-1 \choose m-1}^{-1} \\
     &\times\left\{
     \frac{\cos^{\frac{1}{\alpha} - 1}\varphi}{\sin^{m-1}\varphi}\left(\frac{\xi(r)}{\xi^\prime(r)}\right)^{m-1} (c\chi(r))^{1/\alpha} -\tan\varphi
     \left[{n-1 \choose m-1} \frac{\chi^\prime}{\chi}+{n-1 \choose m} \frac{\xi^\prime}{\xi}\right]
     \right\}
     \\
     <&   
     {n-1 \choose m-1}^{-1} \bigg\{
     \frac{\cos^{\frac{1}{\alpha} - 1}\phi_1}{\sin^{m-1}\phi_1}\left(\frac{\xi(r)}{\xi^\prime(r)}\right)^{m-1} (c\chi(r))^{1/\alpha} \\
     &-\tan\phi_1
     \left[{n-1 \choose m-1} \frac{d}{dr} \log \chi+{n-1 \choose m} \frac{d}{dr} \log \xi\right]
     \bigg\} \Rightarrow \\
     \phi_1 - \phi_0 =& \int^{\phi_1}_{\phi_0} d\varphi 
     < 
     {n-1 \choose m-1}^{-1} \bigg\{
     \frac{\cos^{\frac{1}{\alpha} - 1}\phi_1}{\sin^{m-1}\phi_1}\int_{0}^{\epsilon}\left(\frac{\xi(\rho)}{\xi^\prime(\rho)}\right)^{m-1} (c\chi(\rho))^{1/\alpha} \,d\rho \\
     &-\tan\phi_1
     \left[{n-1 \choose m-1}  \log \chi+{n-1 \choose m}  \log \xi\right]_{0}^{\epsilon}
     \bigg\} = -\infty
\end{align*}
for $m<n$, which is a contradiction. \hfill $\square$

\begin{proposition}
\label{proposition-yesendpoint}
    Let $m=n$. For each quadrant $Q_i$ and each $\phi_0\in  Q_i$, there exists a unique orbit in $[0,+\infty)\times Q_i$ with an endpoint at $(0,\phi_0)$. Moreover, $(0,\pm\pi/2)$ is an equilibrium point.
\end{proposition}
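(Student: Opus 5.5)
The plan is to remove the singularity at $r=0$ by working with the autonomous field $M(r,\phi)$ of \eqref{ODE-Matrix} rather than with \eqref{1/F-Big-ODE}. For $m=n$ we have ${n-1\choose n}=0$ by Notation \ref{notation-Sm}, so \eqref{S-nformula-} turns the second component into
\begin{align*}
\dot\phi = \Big(\frac{\xi}{\xi'}\Big)^{n-1}(c\chi)^{1/\alpha}\,\frac{\cos^{1/\alpha}\phi}{\sin^{n-1}\phi} - \frac{\chi'}{\chi}\,\sin\phi .
\end{align*}
By Structural Condition \ref{StructuralConditionrto0}, $\xi/\xi' = r + O(r^3)$ and $\chi'/\chi = r+O(r^3)$ (or $\equiv 0$). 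Smoothness of the metric at the pole makes $\xi$ odd and $\chi$ even, so both quotients extend to smooth odd functions on a neighbourhood $(-\delta,\delta)$ of $r=0$. The first step is therefore to observe the following. On every strip where $\sin\phi\neq 0$, i.e.\ away from $\phi\in\{-\pi,0,\pi\}$, the field $M$ extends to a $C^1$ (in fact smooth) vector field on $(-\delta,+\infty)\times(\text{strip})$. For $\alpha=1/n$ the factor $\cos^{n}\phi$ is smooth; for $\alpha=1$ nothing is needed. On the axis $r=0$ this field equals $(\cos\phi,0)$. In particular $M(0,\pm\pi/2)=0$, which is the second assertion of the statement: $(0,\pm\pi/2)$ is an equilibrium point.

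Next comes the case $\phi_0$ in the interior of $Q_i$, so $\cos\phi_0\neq0$. Here the extended field at $(0,\phi_0)$ is $(\cos\phi_0,0)$, which is transversal to the line $r=0$. Picard--Lindel\"of gives a unique integral curve through $(0,\phi_0)$. It crosses the axis, and for $\tau$ on the appropriate side it enters $r>0$: the side $\tau>0$ if $\cos\phi_0>0$, the side $\tau<0$ otherwise. That half is an orbit of \eqref{ODE-Matrix} with endpoint $(0,\phi_0)$, and by continuity it stays in $Q_i$ for short time. For uniqueness, suppose an orbit $\gamma(\tau)$ in $(0,\infty)\times Q_i$ satisfies $\gamma(\tau)\to(0,\phi_0)$ as $\tau\to\tau_*$. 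Since $\dot r=\cos\phi$ is bounded away from zero near $\phi_0$, the parameter $\tau_*$ is finite. Since the extended field is continuous, $\gamma$ extends to $\tau_*$ as a solution of the extended system through $(0,\phi_0)$. Uniqueness for the Lipschitz extended field then forces $\gamma$ to coincide with the curve above.

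For the boundary points $\phi_0=\pm\pi/2$, which are equilibria, "orbit with endpoint" must mean an orbit converging to the equilibrium. Here I would linearize. At $(0,\pi/2)$ the partial derivatives are $\partial_r\dot r=0$ and $\partial_\phi\dot r=-1$. Next, $\partial_\phi\dot\phi=0$, because at $r=0$ both terms carry a vanishing factor. Finally, $\partial_r\dot\phi=-1$: the first term vanishes at $\phi=\pi/2$, since $\cos^{1/\alpha}\phi=0$ there, and the second contributes $-(\chi'/\chi)'(0)\sin\phi=-1$ in the nonparallel case. The Jacobian is thus $\begin{pmatrix}0&-1\\-1&0\end{pmatrix}$, with eigenvalues $\pm1$. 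Hence $(0,\pi/2)$ is a hyperbolic saddle. The stable and unstable manifold theorem gives exactly one orbit tangent to $(1,-1)$ and one tangent to $(1,1)$ leaving into $r>0$, one in $Q_1$ and one in $Q_2$. This yields existence and uniqueness in each quadrant; $(0,-\pi/2)$ is handled symmetrically. By Remark \ref{rmk-phasespace}, for $n$ even and $\alpha=1/n$ only $Q_1\cup Q_4$ is relevant.

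I expect the main obstacle to be justifying that the extension of $M$ across $r=0$ is genuinely $C^1$, uniformly near $\phi_0$, and in particular for $\alpha=1/n$ near $\phi=\pm\pi/2$, where $\cos^{n}\phi$ changes sign. I would settle this by insisting on the odd/even parity of $\xi,\chi$ coming from regularity of $\bar g$ at the axis of rotation. I would also double-check that the derivative $\partial_r\dot\phi$ at the saddle is indeed nonzero, which uses that $X$ is nonparallel.
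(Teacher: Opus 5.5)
Your proposal is correct and is essentially the paper's own argument, made rigorous. The paper's proof also rewrites $\dot\phi=(\xi/\xi')^{n-1}(c\chi)^{1/\alpha}\cos^{1/\alpha}\phi/\sin^{n-1}\phi-(\chi'/\chi)\sin\phi\to 0$ as $r\to 0$, concludes that the system is well defined on $\{0\}\times Q_i$, invokes existence and uniqueness for the Cauchy problem at $(0,\phi_0)$, and notes that $(0,\pm\pi/2)$ is an equilibrium. Your smooth extension across $r=0$, the transversality and finite-time argument for uniqueness, and the saddle analysis at $(0,\pm\pi/2)$ fill in details the paper leaves implicit.

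The one caveat is that your stable/unstable-manifold treatment of $\phi_0=\pm\pi/2$ genuinely needs $X$ nonparallel, since it uses $(\chi'/\chi)'(0)=1$. When $\chi\equiv 1$ the whole line $\phi=\pm\pi/2$ consists of equilibria and the linearization at $(0,\pi/2)$ is nilpotent. That boundary case must then be handled separately, for instance through the explicit integration of Proposition \ref{PropXparallelGeneral}; the paper's one-line proof does not address it beyond observing the equilibrium.
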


\noindent \textit{Proof.} For $m=n$,
\begin{align*}
    S_n(r) = \frac{\chi^\prime}{\chi}\left(\frac{\xi^\prime}{\xi}\right)^{n-1} \Rightarrow
    \dot \phi =  \sin\phi \left[ \left(\frac{\xi}{\xi^\prime}\right)^{n-1} (c\chi)^{1/\alpha} \frac{\cos^{1/\alpha}\phi}{\sin^n\phi} - \frac{\chi^\prime}{\chi} \right]\to 0
\end{align*}
when $r\to0$. See Remark \ref{Sm-behavior} and Structural  Condition \ref{StructuralConditionrto0}. Therefore, the ODE system is well-defined in $\{0\}\times Q_i$. Then there exists a unique orbit that contains $(0,\phi_0)$ in $[0,+\infty)\times Q_i$. It is immediate to check that $(0,\pm\pi/2)$ is an equilibrium point.

\hfill $\square$

\begin{figure}[H]
    \centering
    \includegraphics[width=0.40\linewidth]{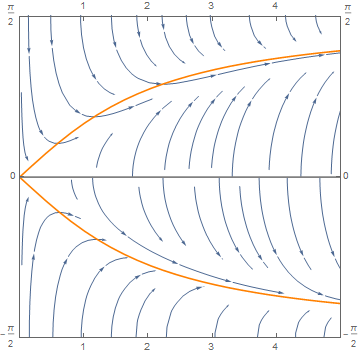}
    \includegraphics[width=0.40\linewidth]{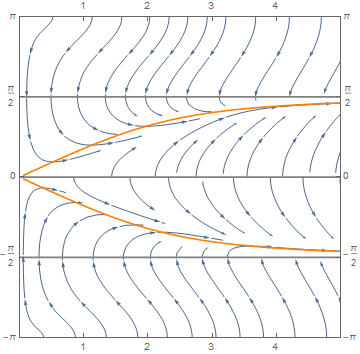}
    \caption{$\alpha=1/m$ (left), $\alpha=1$ (right) and $m$ even phase portraits. Blue oriented lines represent trace of orbits. Orange continuous lines are graphs of $\Phi_{m,\alpha}$ and $-\Phi_{m,\alpha}$.}
    \label{fig:alpha-m-even}
\end{figure}

\begin{figure}[H]
    \centering
    \includegraphics[width=0.30\linewidth]{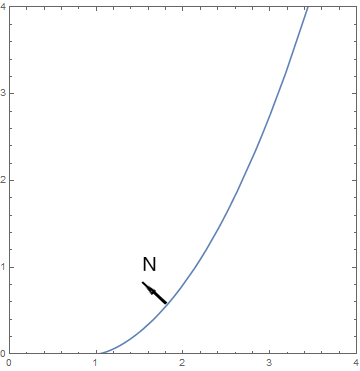}
    \includegraphics[width=0.30\linewidth]{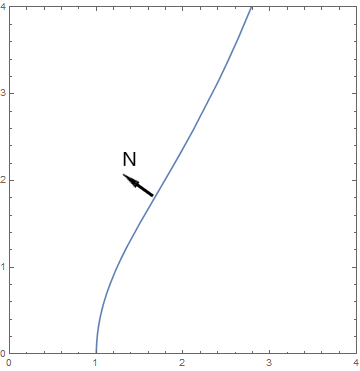}
    \includegraphics[width=0.232\linewidth]{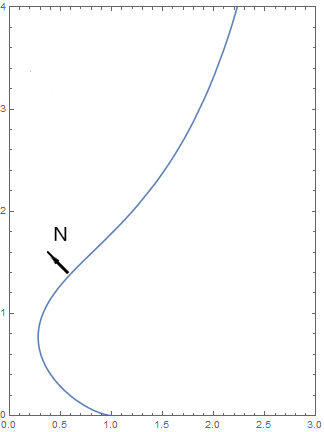}
    \caption{From left to right: profile curve of rotational translators in $\mathcal{C}^1$, $\mathcal{C}^2$ and $\mathcal{C}^4$, respectively.}
    \label{fig:profile-curves-m-even}
\end{figure}

\begin{figure}[H]
    \centering
    \includegraphics[width=0.40\linewidth]{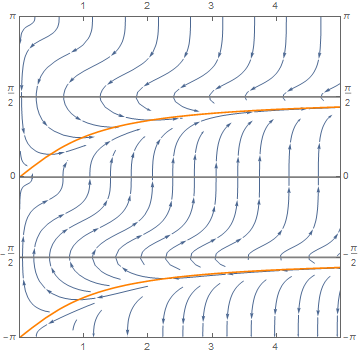}
     \includegraphics[width=0.225\linewidth]{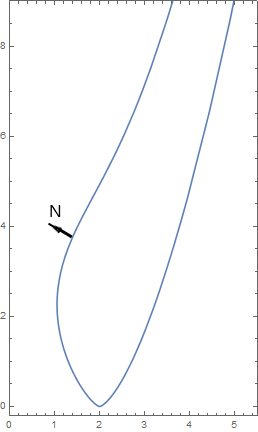}
    \caption{On left, $\alpha\in\{1/m,1\}$ and $m$ odd phase plane. Blue oriented lines represent trace of orbits. Orange continuous lines are graphs of $\Phi_{m,\alpha}$ and $\Phi_{m,\alpha}-\pi$. Profile curve of a rotational translator in $\mathcal{C}^3$ on right.}
    \label{fig:enter-label}
\end{figure}

\noindent \textit{Proof of Theorem \ref{ExistenceTheorem-m<nXnonparallel}, cases (b), (c) and (d)}. 
In order  to prove (b), we set $m$ even and $\alpha=1/m$ and then consider $\Theta_{m,\alpha}= (0,+\infty)\times(Q_1\cup Q_4)$. See Remark \ref{rmk-phasespace}. The translating soliton $\Sigma^1_{r_0}$ corresponds to an orbit in $(0,+\infty)\times Q_1$ or $(0,+\infty)\times Q_4$ with an endpoint at $(r_0,0)$, see Propositions \ref{proposition-orbitTrapped} and \ref{proposition-endpoint-raxis}. The translating soliton $\Sigma_{r_0}^{2}$ corresponds to the orbits starting at $(r_0,\pm \pi/2)$. In both cases, the orbit is the graph of a function $\varphi(r)$, $r\geq   r_0$, see Proposition \ref{proposition-orbitTrapped}. For each pair of orbits corresponding to $\Sigma^1_{r_0}$ or $\Sigma^2_{r_0}$, one of these two orbits is obtained from the other by reflection \eqref{reflection-symmetry}. See Remark \ref{remark-symmetries-reflection-orientation}.

To prove (c) we set $m$ odd. The translating soliton $\Sigma_{r_1}^3$ corresponds to the union of two distinct orbits with the same endpoint $(r_0,0)$, $r_0>r_1$ and $r_0$ depending on $r_1$, or the union of other pair of orbits, one with an endpoint in $(r_0,\pi)$ and the other with and endpoint in $(r_0,-\pi)$, $r_0>r_1$. These two situations correspond to a reversion of orientation \eqref{reversion-of-orientation}. We focus on the first situation. One of the orbits is $\gamma_-$  that contains $(r_1,-\pi/2)$, $r_1>0$. This orbit in $\mathbb{R}_+\times Q_4$ has an endpoint in $(r_0,0)$, see Proposition \ref{proposition-orbitScapingQ}, and in $\mathbb{R}_+\times Q_3$ it is defined for all $r>r_1$, see Proposition \ref{proposition-orbitTrapped}. The other orbit $\gamma_+$ is the one in $\mathbb{R}_+\times Q_1$ with an endpoint in $(r_0,0)$. By Proposition \ref{proposition-orbitTrapped}, $\gamma_+\subset \mathbb{R}_+\times Q_1$. These two orbits match orientation at $(r_0,0)$, therefore their union corresponds to a translating soliton. 

To prove the statement 3 of (c), and analogously the statement 4 of (d), we define, for $m$ odd and specifically in this context, $r_0=r_0(r_1)$, see Remark \ref{rmk-continuityofendpoints-orbits}. If $m<n$, suppose that there exists $r_n$ as in the statement. Then $\lim_{r_1\to 0} r_0(r_1) = r_n$. An orbit in $\mathbb{R}_+\times Q_4$ with an endpoint in $(r_0,0)$, $0<r_0<r_n$, must converge to some $(0,\beta(r_0))$, $\beta(r_0)\in (-\pi/2,0)$, since two orbits do not intersect in the phase space, which contradicts Proposition \ref{propostion-noendpoint}. If $m=n$ and $X$ is nonparallel, the orbit $\gamma_\beta$ starting at $(0,\beta)$, $\beta\in(-\pi/2,0)$, see Proposition \ref{proposition-yesendpoint}, have the other endpoint at $(r_0(\beta), 0)$ and $r_0(\beta)>0$ is decreasing, see Remark \ref{rmk-continuityofendpoints-orbits}. Therefore $r_0(\beta) \to r_n>0$ as $\beta \to -\pi/2$. Finally, the orbit $\gamma_n$ with an endpoint at $(r_n,0)$ in $\mathbb{R}_+\times Q_4$ converges to $(-\pi/2,0)$ and this orbit works as a barrier for orbits $\gamma_{r_0}$ in $\mathbb{R}_+\times Q_4$ with an endpoint at $(r_0,0)$: if $0<r_0<r_n$, $\gamma_{r_0}$ has another endpoint in $(0,\beta(r_0))$, $\beta(r_0)\in(-\pi/2,0)$, and if $r_0>r_n$, $\gamma_{r_0}$ pass through $(r_1(r_0),-\pi/2)$, $0<r_1(r_0)<r_0$, since two orbits do not intersect in the phase space. The proof of statement 4 in (c) is analogous to the proof of statement 5 of (d) which is presented in the sequel.

To prove (d) we set $m$ even and $\alpha=1$. The translating soliton $\Sigma_{r_1}^1$ corresponds to the orbit with an endpoint at $(r_1,0)$ in $\mathbb{R}_+\times Q_1$ or $\mathbb{R}_+\times Q_4$. See Propositions \ref{proposition-orbitTrapped} and \ref{proposition-endpoint-raxis}. The translating soliton $\Sigma_{r_1}^4$ corresponds to the orbit passing through $(r_1,\pm\pi/2)$. This orbit in $\mathbb{R}_+ \times Q_2$, respectively in $\mathbb{R}_+ \times Q_3$, is the graph of an increasing, respectively decreasing, function that hits a point $(r_0,\pm\pi)$ for some $r_0>r_1$. See Proposition \ref{proposition-orbitTrapped}. This orbit in $\mathbb{R}_+ \times Q_1$, respectively $\mathbb{R}_+ \times Q_4$, is defined for all $r>r_1$. See Proposition \ref{proposition-orbitScapingQ}. The proof of statement 4 in (d) is analogous to the proof of the statement 3 in (c). For each pair of orbits corresponding to a single solution in $\mathcal{C}^2$ or $\mathcal{C}^4$, one is obtained from the other by a reflection \eqref{reflection-symmetry}. Now we prove the statement 5 in (d). The hypersurfaces $\Sigma_{\phi_0}^1$ and $\Sigma_{\phi_0}^4$ correspond to the orbit starting at $(0,\phi_0)$, if $\phi_0\in Q_1 \cup Q_4$ or $\phi_0 \in Q_2 \cup Q_3$, respectively. Finally, $\Sigma_{\phi_0}^4$ is bounded for $\phi_0\neq  \pm \pi/2$  because $s'(r)= \tan\phi / \chi$ is bounded, and is unbounded for $\phi_0 =\pm \pi/2$, since
\begin{align*}
    |s(r) - s(r_-)| =& \left| \int_{r_-}^{r} \frac{ds}{dr} d\rho \right|
    = \left|\int_{r_-}^{r}  \tan\varphi \, \frac{d\varphi}{\varphi'} \right| 
    \\ 
    \geq & \left|
    \int_{\phi(r_-)}^{\phi(r)}
    {n-1 \choose m-1} \frac{1}{(c\chi)^{1/\alpha}}\left(\frac{\xi^\prime(\rho)}{\xi(\rho)}\right)^{n-1}  
    \frac{\sin^{m}\varphi}{ \cos^{1/\alpha} \varphi} \, d\varphi
    \right|
    \\
    \geq&
    \Lambda \left|
    \int_{\phi(r_-)}^{\phi(r)}  
     \frac{\sin^{m}\varphi}{ \cos^{1/\alpha} \varphi} \, d\varphi
    \right|
     \underset{\phi\to\pi/2}{\longrightarrow} +\infty,
\end{align*}
where
\begin{align*}
    \Lambda = \min_{r\in[r_0,r_1]} 
    \left\{ 
    {n-1 \choose m-1} \frac{1}{(c\chi)^{1/\alpha}}\left(\frac{\xi^\prime(\rho)}{\xi(\rho)}\right)^{n-1} 
    \right\}.
\end{align*}
These solutions are $C^2$-regular at points $(r,\phi)$, $\phi\not\in\{-\pi,0,\pi\}$, and $C^2$-singular otherwise by \eqref{kappa-eq}. The convexity is given by \eqref{kappa-eq}. This ends the proof. \hfill $\square$

\subsection{Bowl solitons for higher order mean curvature flows}

We start the proof of case (a) in Theorem \ref{ExistenceTheorem-m<nXnonparallel} with a preliminary existence result in the case when $X$ is a nonparallel vector field.

\begin{lemma}
\label{orbit-bowl-Xparallel}
    For $m=n$ and $X$ nonparallel, there exists an orbit in $\mathbb{R}_+\times Q_1$ with an endpoint at $(r,\phi)=(0,0)$.
\end{lemma}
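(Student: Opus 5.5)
The plan is a shooting argument in the first quadrant. Two earlier facts drive it. First, for $m=n$ and $X$ nonparallel, Proposition \ref{proposition-yesendpoint} gives, for every $\beta\in(0,\pi/2)$, a unique orbit $\gamma_\beta$ with endpoint $(0,\beta)$. Second, Proposition \ref{proposition-endpoint-raxis} gives, for every $r_0>0$, a unique orbit $\eta_{r_0}$ in $\mathbb{R}_+\times Q_1$ with endpoint $(r_0,0)$.

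By Proposition \ref{proposition-orbitTrapped}, each $\eta_{r_0}$ is the graph of a function $\varphi$ on $[r_0,+\infty)$ lying below $\Phi_{n,\alpha}$ and increasing, since $d\varphi/dr>0$ whenever $0<\varphi<\Phi_{n,\alpha}$. Because orbits do not cross, the family $\{\eta_{r_0}\}$ is monotone in $r_0$: as $r_0$ decreases, the orbits move up and to the left. The same holds for $\{\gamma_\beta\}$, which is monotone in $\beta$. I will take the candidate orbit to be the limit of $\eta_{r_0}$ as $r_0\to 0^+$, or equivalently of $\gamma_\beta$ as $\beta\to 0^+$. The aim is to show these two limits coincide along a single orbit that separates the two families.

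Concretely, let $\mathcal{U}$ be the union of the regions lying below some $\eta_{r_0}$, and let $\mathcal{V}$ be the union of the regions lying above some $\gamma_\beta$ with $\beta$ small. Both sets are open. They are disjoint, since an orbit of one family cannot cross an orbit of the other. Pick a point $p=(r_*,\phi_*)\in\mathbb{R}_+\times(0,\pi/2)$ on the boundary of $\mathcal{U}$, and let $\gamma$ be the orbit through $p$. By properness, $\gamma$ has its endpoints on $\partial\Theta_{n,\alpha}$.

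I then rule out every possible left endpoint of $\gamma$ except $(0,0)$.
\begin{itemize}
\item It cannot be $(r,0)$ with $r>0$. Such a $\gamma$ would be some $\eta_{r}$ by the uniqueness in Proposition \ref{proposition-endpoint-raxis}, and $\eta_{r/2}$ would then lie above it, contradicting that $p$ is on the boundary of $\mathcal{U}$.
\item It cannot be $(0,\beta)$ with $\beta>0$. Then $\gamma=\gamma_\beta$, and $\gamma_{\beta/2}$ would lie below it, putting $p$ in $\mathcal{V}$ and hence outside $\overline{\mathcal U}$. This step needs a little care.
\item It cannot be on $\phi=\pi/2$ with $r>0$. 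Going leftward from such a point, $\dot r=\cos\phi>0$ forces $\varphi$ to hit $Q_1$'s boundary in a way incompatible with the sign analysis of \eqref{ODE-ffunction(+-)}.
\end{itemize}
Hence the left endpoint is $(0,0)$, and $\gamma$ is the desired orbit.

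The main obstacle is showing that the limits of $\eta_{r_0}$ and of $\gamma_\beta$ do not leave a gap region filled with orbits that accumulate at $(0,0)$ in some other way. This amounts to controlling the behavior near the corner $(0,0)$, where $\dot\phi$ blows up because of the factor $\cos^{1/\alpha}\phi/\sin^n\phi$. I would first try the asymptotics of Remark \ref{Sm-behavior}, which give $S_n\sim r^{-(n-2)}$ for nonparallel $X$, together with $\xi/\xi'\sim r$. From these I expect the inequality $d\varphi/dr\gtrsim r^{n-1}/\varphi^{n-1}$ near the corner. Integrating $\varphi^{n-1}\,d\varphi\gtrsim r^{n-1}\,dr$ shows that any orbit reaching small $r$ must satisfy $\varphi\gtrsim r$, which excludes spurious behavior such as an orbit approaching $(0,0)$ tangent to the $r$-axis with a different profile. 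Any orbit of the gap other than $\gamma$ would then have to hit the $r$-axis or the $\phi$-axis away from the origin, contradicting uniqueness. So the boundary orbit $\gamma$ is well defined and ends at $(0,0)$.
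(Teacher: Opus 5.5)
Your core mechanism works, and it is a genuinely different route from the paper's. The paper writes the orbits $\eta_{r_k}$ ($r_k\searrow 0$) as graphs $r=\rho_k(\phi)$ and takes their monotone limit. It uses the regularity of the system up to $r=0$ (Proposition \ref{proposition-yesendpoint}) to get locally uniform convergence, so the limit solves \eqref{F-Big-ODE}, and it reads off the endpoint from $\rho(0)=\lim r_k=0$.

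You take the same limiting curve, realized as $\partial\mathcal U$, but you never pass derivatives to a limit. Instead, the second family $\gamma_\beta$ serves as an upper barrier. The endpoint is then located by case elimination, using only non-crossing of orbits and the uniqueness parts of Propositions \ref{proposition-endpoint-raxis} and \ref{proposition-yesendpoint}. This makes explicit something the paper leaves implicit: why the limit of the $\eta_{r_k}$ cannot run up the segment $\{0\}\times[0,\beta]$ and then follow $\gamma_\beta$. The paper's version, in exchange, produces the orbit directly as a solution $\rho(\phi)$ of the ODE near $\phi=0$. You should also say why $\partial\mathcal U$ meets the open strip: the strip is connected, and $\emptyset\neq\mathcal U\neq$ strip because $\mathcal V\neq\emptyset$.

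Your third bullet, however, is wrong as justified. Orbits in $Q_1$ with left endpoint $(r_1,\pi/2)$, $r_1>0$, do exist. At such a point $\dot r=0$ and $\dot\phi=-\chi'(r_1)/\chi(r_1)<0$, so the orbit enters $Q_1$ and runs to the right above $\Phi_{n,\alpha}$ with $\varphi'<0$. This is exactly what the sign analysis of \eqref{ODE-ffunction(+-)} predicts; for $n$ even and $\alpha=1/n$ these are the orbits of the catenoids $\Sigma^2_{r_0}$. So nothing is incompatible.

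The correct reason uses the barrier you already have. Every $\gamma_\beta$ satisfies $\varphi_\beta(r_1)<\pi/2$, so such an orbit lies above every $\gamma_\beta$, hence inside $\mathcal V$, and cannot contain $p$. Your case list also omits the equilibrium $(0,\pi/2)$ of Proposition \ref{proposition-yesendpoint} as a possible left endpoint. It is excluded the same way, since near $r=0$ such an orbit stays close to $\pi/2>\beta$. With these two repairs the elimination is complete.

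Finally, your last paragraph addresses a non-issue. The elimination applies to every point outside $\mathcal U\cup\mathcal V$, so every orbit in the ``gap'' ends at $(0,0)$, and existence needs nothing more. Whether the gap is a single orbit is a uniqueness question; so is your unproved ``equivalently'' claim that the limits of $\eta_{r_0}$ and $\gamma_\beta$ coincide. The paper settles uniqueness separately in the proof of case (a), via $\partial G/\partial y<0$ for \eqref{G-ODE-yr}. The lower bound $\varphi\gtrsim r$ you sketch would not settle it anyway.
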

\noindent \textit{Proof.} Let $\gamma_k$ be an orbit in $[0,+\infty)\times Q_1$ with an endpoint at $(r_k,0)$, where $r_k\searrow 0$, see Proposition \ref{proposition-endpoint-raxis}. Let $\rho_k:[0,\epsilon]\to[0,+\infty)$ be functions whose graphs correspond to $\gamma_k$. Therefore, $\rho_k$ are monotone sequences that converge pointwisely to $\rho:[0,\epsilon]\to [0,+\infty)$, with $\rho(0) = 0$. Since $\rho(\phi)$ satisfies \eqref{F-Big-ODE}, we conclude that it converges uniformly to $\rho(\phi)$ in compact intervals of $(0,\epsilon)$, see Proposition \ref{proposition-yesendpoint}. Therefore $\rho(\phi)$ satisfies \eqref{F-Big-ODE} and the corresponding orbit has an endpoint at $(0,0)$, which concludes the proof of the Lemma. \hfill $\square$

\medskip

Now, we follow with the the proof of case (a) that asserts the existence of bowl-solitons for higher order mean curvature flows.

\medskip

\noindent
\textit{Proof of Theorem \ref{ExistenceTheorem-m<nXnonparallel}, case (a)}. We fix the orbits in $\mathbb{R}_+\times Q_1$. To show uniqueness, we define
\[
    y= \cos\phi.
\]
If $(r, y(r))$ is the trace of an orbit, then
\begin{align}
\label{G-ODE-yr}
    \frac{dy}{dr} = {n-1 \choose m-1}^{-1} \left(\frac{\xi}{\xi^\prime} \right)^{m-1}(c\chi)^{1/\alpha} \frac{1-y^2}{y} \left[ f(\Gamma(r)) - f(y) \right] =: G(r,y),
\end{align}
where $\Gamma(r) = \cos \Phi_{m,\alpha}$ is defined implicitly by
\begin{align}
    \label{Gamma-function}
    \frac{\Gamma^{1/\alpha}(r)}{(1-\Gamma^2(r))^{m/2}} =f(\Gamma(r)) = \frac{S_m(r)}{(c\chi)^{1/\alpha}},
\end{align}
where $f$ is given by \eqref{f-function}. Therefore $\Gamma(r)$ is non-increasing, see Remark \ref{Sm-behavior} and Lemma \ref{lemma-fdiffeomorphism}, and by \eqref{G-ODE-yr},
\begin{align*}
    \frac{\partial G}{\partial y} =& 
    {n-1 \choose m-1}^{-1} \left(\frac{\xi}{\xi} \right)^{m-1}(c\chi)^{1/\alpha}  \left[ -\frac{1+y^2}{y^2}(f(\Gamma(r)) - f(y)) + \frac{1-y^2}{y}(-f^\prime(y)) \right] \\
    =&-{n-1 \choose m-1}^{-1} \left(\frac{\xi}{\xi} \right)^{m-1}(c\chi)^{1/\alpha}
    \\
    & \times \left[ \frac{1+y^2}{y^2}(f(\Gamma(r)) - f(y)) + \frac{1-y^2}{y} \frac{1+y^2(m\alpha-1)}{\alpha y (1-y^2)} f(y) \right] 
    \\
    =&-{n-1 \choose m-1}^{-1} \left(\frac{\xi}{\xi} \right)^{m-1}(c\chi)^{1/\alpha}
    \\
    &\times \left[ \frac{1+y^2}{y^2}f(\Gamma(r)) +  \left(\frac{1+y^2(m\alpha-1)}{\alpha y^2} -\frac{1+y^2}{y^2}\right) f(y) \right]  
    \\
    =&-{n-1 \choose m-1}^{-1} \left(\frac{\xi}{\xi} \right)^{m-1}(c\chi)^{1/\alpha}
    \\
    &\times \left[ \frac{1+y^2}{y^2}f(\Gamma(r)) +  \left(\frac{1}{\alpha} - (1+y^2) + y^2 \left(m-\frac{1}{\alpha}\right)\right) \frac{f(y)}{y^2} \right]  <0.
\end{align*}
Suppose that $y_-(r)$ and $y_+(r)$ are two distinct functions that satisfy \eqref{G-ODE-yr} and $\lim_{r\to0}y_{-}(r)= 0 =\lim_{r\to0}y_{+}(r)$. Therefore their graphs are trace of orbits in $(0,+\infty) \times Q_1$ with one end in $(0,0)$. Since two orbits do not intersect, we may assume that, without loss of generality, $y_-(r)<y_+(r)$ for $r>0$. However this leads to a contradiction, since
\[
    y^\prime_-(r)=G(r,y_-)>G(r,y_+)=y^\prime_+(r), \ \ \ r>0.
\]
Now we prove existence, which has been already proved for $m=n$ and $X$ nonparallel, see Lemma \ref{orbit-bowl-Xparallel}. We focus on the case $m<n$.
We observe that $y'(r)$ is positive, negative or null if and only if $\Gamma(r)>y(r)$, $\Gamma(r)<y(r)$ or $\Gamma(r) = y(r)$, respectively, by \eqref{G-ODE-yr}. Therefore, an orbit that contains a point in $\{(r,y); \, y>\Gamma(r)\}$, for $m<n$, must be contained in it. Otherwise, since $\Gamma(r)$ is non-increasing, when the graph of $y(r)$ intersects the graph of $\Gamma(r)$, $y(r)$ becomes non-decreasing, which is a contradiction.

Let $\gamma_{r_0}$ be the orbit in $(0,+\infty)\times Q_1$ with an endpoint at $(r_0,1)$, see Proposition \ref{proposition-endpoint-raxis}. Therefore, its trace may be written as a graph of $y_{r_0}:(r_0,+\infty)\to(0,1)$, where $\Gamma_\alpha(r)<y_{r_0}(r)<1$. We define the sequence of functions $\psi_{r_0}:(0,+\infty)\to(0,1)$
\begin{align*}
    \psi_{r_0}(r):=y_{r_0}(r_0+r).
\end{align*}
Observe that
\begin{align}
\label{psiprimer0}
    \psi^\prime_{r_0}(r) = y_{r_0}^\prime(r_0+r)=G(r_0+r,y_{r_0}(r_0+r))=G(r_0+r,\psi_{r_0}(r)).
\end{align}
Since $\chi'/\chi = O(r)$ and $\xi'/\xi = O(r^{-1})$ for $r$ sufficiently closed to $0$, one gets
\begin{align*}
    \frac{\partial G}{\partial r} =& {n-1 \choose m-1}^{-1} \frac{1-y^2}{y} \bigg\{ {n-1\choose m-1} \left(\frac{\chi'}{\chi}\right)' + {n-1\choose m} \left(\frac{\xi'}{\xi}\right)' \\
    & \qquad \qquad \qquad \quad - \left(\frac{\xi}{\xi'}\right)^{m-2}\chi^{\frac{1}{\alpha}}\left[(m-1)\left(\frac{\xi}{\xi'}\right)' + \frac{1}{\alpha}\left(\frac{\xi}{\xi'}\right)\left(\frac{\chi'}{\chi}\right)\right]
    \bigg\} \\
    =&{n-1 \choose m-1}^{-1} \frac{1-y^2}{y} \left[ -{n-1 \choose m}\frac{1}{r^2} + O(1) \right] \ \ \  \mbox{ as } r\to 0, \ \ \ \mbox{ since } m<n.
\end{align*}
Therefore $G(r,y)$ is a decreasing function of $y$ and, for some $r^*>0$, $r\mapsto G(r,y)$ is a decreasing function for $0<r<r^*$. For $r_0^*<r_0$ and $r+r_0<r^*$ we have $\psi_{r_0}(0)=1=\psi_{r_0^{*}}(0)$ and
\begin{align*}
    \psi^\prime_{r_0^{*}}(r) = G(r_0^{*}+r,\psi_{r_0^*}(r)) > G(r_0+r,\psi_{r_0^*}(r)) \ \ \ \mbox{and} \ \ \   
    \psi^\prime_{r_0}(r) =  G(r_0+r,\psi_{r_0}(r))
\end{align*}
for all $r<r^*-r_0$. By standard comparison argument, $\psi_{r_0^*}(r)\leq\psi_{r_0}(r)$ for all $r<r^*-r_0$. Moreover, $r\mapsto\psi_{r_0}(r)$ is a decreasing function and $0<\Gamma(r_0+r)<\psi_{r_0}(r)<1$. Hence, by \eqref{G-ODE-yr}, $\psi_{r_0}$ converges uniformly in any compact $I\subset(0,r^*)$ to a function $y_0$ given by
\begin{align*}
    y_0(r):=\lim_{r_0\to0^+} \psi_{r_0}(r).
\end{align*}
By \eqref{G-ODE-yr}, the derivatives $\psi_{r_0}^\prime$ converge uniformly to the function $r\mapsto G(r,y_0(r))$ in any compact interval $I\subset (0,r^*)$ as $r_0\to0$. Therefore, $y_0$ is differentiable and satisfies
\begin{align}\label{y0Prime}
    y_0^\prime(r) = \lim_{r_0\to0^+} \psi^\prime_{r_0}(r) = \lim_{r_0\to0^+} G(r_0+r,\psi_{r_0}(r)) = G(r,y_0(r)).
\end{align}
Since $y(r)$ satisfy \eqref{y0Prime}, its graph is the trace of an orbit in $(0,+\infty)\times (0,1]$, and we can extend $y_0:(0,+\infty)\to(0,1)$. See Proposition \ref{proposition-orbitTrapped} for $Q_1$. 

From now on we consider $m\leq n$.
By Remark \ref{Sm-behavior}, the following implications hold:
\begin{align*}
m<n\Rightarrow& 
    0<\frac{1-\Gamma}{r^2} = \frac{(c\chi)^{\frac{2}{m\alpha}}\Gamma^{\frac{2}{m\alpha}}}{(1+\Gamma)r^2} 
    \left[ (c\chi)^{-1/\alpha}
    \frac{(1-\Gamma^2)^{m/2}}{\Gamma^{1/\alpha}}
    \right]^{\frac{2}{m}} 
    \\
    &\quad =\frac{(c\chi)^{\frac{2}{m\alpha}}\Gamma^{\frac{2}{m\alpha}}}{(1+\Gamma)r^2} S_m^{-\frac{2}{m}} \to \frac{1}{2} c^{\frac{2}{m\alpha}} {n-1 \choose m}^{-\frac{2}{m}} =: C_m
    \\
m=n \Rightarrow&
    0<\frac{1-\Gamma}{r^{2-2/n}} = \frac{(c\chi)^{\frac{2}{n\alpha}}\Gamma^{\frac{2}{n\alpha}}}{(1+\Gamma)r^{2-2/n}} 
    \left[ (c\chi)^{-1/\alpha}
    \frac{(1-\Gamma^2)^{n/2}}{\Gamma^{1/\alpha}}
    \right]^{\frac{2}{n}} 
    \\
    &\quad=\frac{(c\chi)^{\frac{2}{n\alpha}}\Gamma^{\frac{2}{n\alpha}}}{(1+\Gamma)r^{2-2/n}} S_n^{-\frac{2}{n}} \to \frac{1}{2} c^{\frac{2}{n\alpha}} =: C_n
\end{align*}
as $r\to0$.
We define
\begin{align*}
    z_m(r) =& \frac{1-\Gamma(r)}{r^2} - C_m, \quad m<n
    \\
     z_n(r) =& \frac{1-\Gamma(r)}{r^{2-2/n}} - C_n,
\end{align*}
and we have that $z_m(r)\geq0$ and $\lim_{r\to0} z_m(r)=0$, $m\in\{2,\dots,n\}$. Observe that
\begin{align}
    0<1-y_0(r)<1-\Gamma(r) = 
    \begin{cases}
        r^2(C_m+z_m), \qquad m<n, \\
        r^{2-2/n}(C_n+z_n), \ \, m=n.
    \end{cases}
\end{align}
therefore $\lim_{r\to0}y_0(r) =0$ and, for $m<n$ or $m = n \geq 3$, 
\[
    \lim_{r\to0}\frac{1-y_0(r)}{r} = 0.
\]
For $m=n=2$, by Remark \ref{Sm-behavior} and \eqref{G-ODE-yr} we have $\lim_{r\to0} y_0^\prime(r) = 0$. We conclude that $y_0$ is differentiable at $r=0$ and $y_0^\prime(0) =0$. Now we define
\begin{align*}
    w_m := 1-D_m r^2,
\end{align*}
$D_m>0$ to be determined. Therefore $w_m^\prime = -2D_mr$ and, for $m<n$,
\begin{align*}
    G(r,w_m) =& {n-1 \choose m-1}^{-1} \left(\frac{\xi}{\xi^\prime}\right)^{m-1} (c\chi)^{1/\alpha} \frac{1-w_m^2}{w_m}
    \left[  
    \frac{S_m}{(c\chi)^{1/\alpha}} - \frac{w_m^{1/\alpha}}{(1-w_m^2)^{m/2}} 
    \right] 
    \\
    =& {n-1 \choose m-1}^{-1} \frac{(c\chi)^{1/\alpha}}{(\xi^\prime)^{m-1}}\left(\frac{\xi}{r}\right)^{m-1}  \frac{1+w_m}{w_m} D_m r^{m+1}
    \\
    &\times\left\{  
    \frac{1}{(c\chi)^{1/\alpha}}{n-1 \choose m}r^{-m} + O(r^{-(m-1)})  - \frac{w_m^{1/\alpha}}{(1+w_m)^{m/2}} \frac{1}{(D_mr^2)^{m/2}} 
    \right\}
    \\
    =& {n-1 \choose m-1}^{-1} \frac{(c\chi)^{1/\alpha}}{(\xi^\prime)^{m-1}}\left(\frac{\xi}{r}\right)^{m-1}  \frac{1+w_m}{w_m} D_m
    \\
    &\times\left\{  
    \left[ 
    \frac{1}{(c\chi)^{1/\alpha}}{n-1 \choose m} - \frac{w_m^{1/\alpha}}{(1+w_m)^{m/2}}\frac{1}{D_m^{m/2}}
    \right] r + O(r^{2}) 
    \right\}.
\end{align*}
For $m=n$,
\begin{align*}
    G(r,w_n) =& \left(\frac{\xi}{\xi^\prime}\right)^{n-1} (c\chi)^{1/\alpha} \frac{1-w_n^2}{w_n}
    \left[  
    \frac{S_n}{(c\chi)^{1/\alpha}} - \frac{w_n^{1/\alpha}}{(1-w_n^2)^{m/2}} 
    \right] 
    \\
    =&  \frac{(c\chi)^{1/\alpha}}{(\xi^\prime)^{n-1}}\left(\frac{\xi}{r}\right)^{n-1}  \frac{1+w_n}{w_n} D_n r^{n+1}
    \left\{  
    O(r^{-(n-2)})  - \frac{w_n^{1/\alpha}}{(1+w_n)^{n/2}} \frac{1}{(D_nr^2)^{n/2}} 
    \right\}
    \\
    =& \frac{(c\chi)^{1/\alpha}}{(\xi^\prime)^{n-1}}\left(\frac{\xi}{r}\right)^{n-1}  \frac{1+w_n}{w_n} D_n
    \left\{  
    - \frac{w_m^{1/\alpha}}{(1+w_n)^{m/2}}\frac{1}{D_n^{n/2}}
    r + O(r^{3}) 
    \right\}.
\end{align*}
We conclude that there exists $\epsilon>0$ such that $0\leq r < \epsilon \Rightarrow w'(r) > G(r,w(r))$, respectively $w'(r) < G(r,w(r))$, if and only if

\begin{align*}
    &{n-1 \choose m-1}^{-1} c^{1/\alpha} \cdot 2D_m \left[\frac{1}{c^{1/\alpha}} {n-1 \choose m} -\frac{1}{2^{m/2}}\frac{1}{D_m^{m/2}}\right]   <   -2D_m \Leftrightarrow
    \\
    &(2D_m)^{-m/2} - \frac{1}{c^{1/\alpha}} {n-1 \choose m} > {n-1 \choose m-1} c^{-1/\alpha} \Leftrightarrow
    \\
    &(2D_m)^{-m/2} >\left[{n-1 \choose m-1}+{n-1 \choose m} \right] c^{-1/\alpha} = {n \choose m} c^{-1/\alpha} \Leftrightarrow
    \\
    &D_m < \frac{1}{2}c^{\frac{2}{m\alpha}} {n\choose m}^{-2/m}, \quad 
    \mbox{(respectively) }  D_m > \frac{1}{2}c^{\frac{2}{m\alpha}} {n\choose m}^{-2/m},
\end{align*}
where $m=2,\dots,n$. 

In order to finish the proof, we will need the following technical result.

\medskip

\noindent {\bf Claim.}
 \textit{   Fix $D<\frac{1}{2}c^{\frac{2}{m\alpha}} {n\choose m}^{-2/m}$ (respectively $D>\frac{1}{2}c^{\frac{2}{m\alpha}} {n\choose m}^{-2/m}$) and take $\epsilon>0$ such that $0<r<\epsilon\Rightarrow$ $w'(r)>G(r,w(r))$ (respectively  $w'(r)<G(r,w(r))$). Then, $0<r<\epsilon\Rightarrow$ $w(r)\geq y(r)$ (respectively $w(r)\leq y(r)$).
}
    \medskip

\noindent \textit{Proof of the claim}.
 Fix $D<\frac{1}{2}c^{\frac{2}{m\alpha}} {n\choose m}^{-2/m}$ and $\epsilon>0$ as in the statement of the Claim. Suppose that there exists $0<r_1<\epsilon$ such that $w(r_1) < y(r_1)$. Define $r_2=\inf\{r\in[0,r_1] ; \, t\in(r,r_1) \Rightarrow  w(t)<y(t) \}$. By definition, $w(r_2)=y(r_2)$ and $w'(r_2) < y'(r_2)$. If $r_2=0$, $0=w'(0)<y'(0)=0$, a contradiction. If $r_2>0$, $w'(r_2) > G(r_2,w(r_2))=G(r_2,y(r_2))=y'(r_2)$, a contradiction. The proof is analogous for $D>\frac{1}{2}c^{\frac{2}{m\alpha}} {n\choose m}^{-2/m}$ and the proof of the claim is completed.

\medskip

By the claim above, for all $\delta>0$ there exists $\epsilon>0$ such that $0<r<\epsilon$ implies
\[
    \left[\frac{1}{2}c^{\frac{2}{m\alpha}}{n\choose m}^{-2/m} - \delta  \right] r^2 
    < 1-y_0(r) < \left[\frac{1}{2}c^{\frac{2}{m\alpha}}{n\choose m}^{-2/m}+\delta\right]r^2.
\]
Since $\lim_{r\to0}\phi(r)\to 0$, we may take $\epsilon>0$ such that $r<\epsilon\Rightarrow$ $\phi^2(r) < 6\delta^\prime$ for a $\delta^\prime>0$ fixed. 
A straightforward computation shows that
\begin{align*}
    &\left(1 - \frac{\phi^2(r)}{6}\right) \phi(r)\phi^\prime(r) <-y_0^{\prime}(r) = \sin (\phi(r)) \phi^\prime(r) = \frac{\sin(\phi(r))}{\phi(r)} \phi(r) \phi'(r)  < \phi(r)\phi'(r) \Rightarrow
    \\
    &(1- \delta') \frac{(\phi^2)'(r)}{2}  <  -y_0^{\prime}(r)  < \frac{(\phi^2)'(r)}{2} \Rightarrow
    \\
     &(1- \delta') \frac{\phi^2(r)}{2} < \int_{0}^{r} (-y_0^{\prime}(\rho)) d\rho = 1-y_0(r) < \frac{\phi^2(r)}{2} \Rightarrow
     \\
     &\sqrt{\frac{1-\delta'}{2}} \phi(r) < r\sqrt{\frac{1}{2}c^{\frac{2}{m\alpha}}{n\choose m}^{-2/m} + \delta} 
     \quad \mbox{ and } \quad
    \frac{1}{\sqrt{2}} \phi(r) > r\sqrt{\frac{1}{2}c^{\frac{2}{m\alpha}}{n\choose m}^{-2/m} - \delta}.
\end{align*}
Since $\delta,\delta'$ are arbitrarily small, we conclude by Squeeze Theorem that 
$$
\phi'(0) = \lim_{r\to0} \frac{\phi(r)}{r} = c^{\frac{1}{m\alpha}}{n\choose m}^{-1/m}.
$$
Using \eqref{1/F-Big-ODE}, 
\begin{align*}
    \phi'(r) =& {n-1 \choose m-1}^{-1} \frac{(c\chi)^{1/\alpha}}{(\xi')^{m-1}} \left(\frac{\xi}{r}\right)^{m-1}  \bigg[ \cos^{\frac{1}{\alpha}-1}\phi \left( \frac{r}{\sin\phi} \right)^{m-1} 
    \\
    &\qquad \qquad \qquad \qquad \qquad \quad  - \frac{\tan\phi}{r} \frac{1}{(c\chi)^{1/\alpha}}  {n-1 \choose m} +O(r)
    \bigg] \to c^{\frac{1}{m\alpha}} {n\choose m}^{-1/m}
\end{align*}
as $r\to0$, which proves that the bowl soliton is $C^2$ for either $m<n$ or $m=n$ and $X$ nonparallel, with
\begin{align}
    s'(r) = \frac{1}{\chi(r)} \tan\phi(r) =  rc^{\frac{1}{m\alpha}} {n\choose m}^{-1/m} + O(r^2),
\end{align}
by Structural Condition \ref{StructuralConditionrto0} and $\tan\phi$ expansion near $\phi=0$. Integrating this expression yields \eqref{bowl-expansion}.

To show that $\Sigma_0$ is strictly convex, we use \eqref{kappa-eq} to deduce that
\begin{align*}
    \sin\phi_0 \chi(r) \kappa_\tau = \chi(r) (-\dot y_0) + \chi'(r) (1-y_0^2))>0,
\end{align*}
since $\chi(r)$ is a positive non-decreasing function of $r$. \hfill $\square$

\subsection{Gauss-Kronecker curvature case for parallel vector fields}

In the case when  $X$ is a parallel vector field, the Gauss-Kronecker curvature of the cylinders vanishes, that is, 
\begin{align}
    S_n(r)\equiv 0,
\end{align}
by \eqref{S-nformula-} and from the fact that  $\chi^\prime\equiv0$.
In this case, the ODE system (\ref{ODE-Matrix}) becomes 
\begin{align}
\label{ODESystem-Parallel}
    \begin{pmatrix}
    \dot r \\
    \dot \phi
    \end{pmatrix}
    =
    \begin{pmatrix}
    \cos \phi\\
    \left(\frac{\xi}{\xi'}\right)^{n-1} c^{1/\alpha} \sin\phi\frac{\cos^{\frac{1}{\alpha}}\phi}{\sin^n\phi}
    \end{pmatrix}
\end{align}
a particular case that can be explicitly solved as discussed in the following proposition.

\begin{proposition}
\label{PropXparallelGeneral}
    Fixing $(r_0,\phi_0)$, $\phi \not\equiv \pi/2 \mbox{ (mod } \pi)$, in the phase portrait, the trace of the orbit passing through $(r_0,\phi_0)$ solution of \eqref{ODESystem-Parallel} is the graph of the function
    \begin{align}
    \label{IntegralSolution-Parallel}
    r(\phi)  = 
    \begin{cases}
        I_{r_0}^{-1}\left(c^{-1}\int_{\phi_0}^{\phi} \sin^{n-1} \varphi \, d\varphi   \right) \ \ \mbox{ if } \alpha=1, \\
        I_{r_0}^{-1}\left( c^{-n}\int_{\phi_0}^{\phi} \tan^{n-1} \varphi \, d\varphi   \right) \ \ \mbox{ if } \alpha=1/n,
    \end{cases}
\end{align}
where $\phi$ is in the same quadrant of $\phi_0$ and $I_{r_0}:[0,+\infty)\to [b,+\infty)$ is the function given by
\begin{align*}
    I_{r_0}(r) = \int_{r_0}^{r} \left(\frac{\xi(\rho)}{\xi'(\rho)}\right)^{n-1}  \, d\rho, \ \ \ b=\int^0_{r_0}\left(\frac{\xi(\rho)}{\xi'(\rho)}\right)^{n-1}  \, d\rho \leq 0.
\end{align*}
Therefore $r(\phi)$ is bounded for $\alpha=1$ and unbounded for $\alpha=1/n$ as $\phi\to \pm \pi/2$.
\end{proposition}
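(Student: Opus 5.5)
The plan is to separate variables in the system \eqref{ODESystem-Parallel}. Away from $\phi\equiv\pi/2 \pmod \pi$ we have $\dot r=\cos\phi\neq 0$, so the orbit is locally the graph of a function $r=r(\phi)$. This function satisfies \eqref{F-Big-ODE} with $m=n$, $\chi\equiv 1$ and $S_n\equiv 0$:
\begin{align*}
\frac{dr}{d\phi}=\frac{\cos\phi}{\dot\phi}=c^{-1/\alpha}\left(\frac{\xi'(r)}{\xi(r)}\right)^{n-1}\frac{\sin^{n-1}\phi}{\cos^{\frac{1}{\alpha}-1}\phi}.
\end{align*}
For $\alpha=1$ the right-hand side is $c^{-1}(\xi'/\xi)^{n-1}\sin^{n-1}\phi$. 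For $\alpha=1/n$ it is $c^{-n}(\xi'/\xi)^{n-1}\sin^{n-1}\phi/\cos^{n-1}\phi=c^{-n}(\xi'/\xi)^{n-1}\tan^{n-1}\phi$. Multiplying by $(\xi/\xi')^{n-1}$ gives
\begin{align*}
\frac{d}{d\phi}I_{r_0}(r(\phi))=c^{-1}\sin^{n-1}\phi \quad\text{or}\quad c^{-n}\tan^{n-1}\phi .
\end{align*}
Integrating from $\phi_0$, where $r(\phi_0)=r_0$ and hence $I_{r_0}(r_0)=0$, yields the identity $I_{r_0}(r(\phi))=c^{-1/\alpha}\int_{\phi_0}^{\phi}(\cdots)$, with the integrand as in \eqref{IntegralSolution-Parallel}.

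Next I check that $I_{r_0}$ is invertible on the stated range. Its derivative $(\xi/\xi')^{n-1}$ is positive, because $\mathbb{P}$ is Hadamard and so $\xi'>0$. By Structural Condition \ref{StructuralConditionrto0}, $\xi/\xi'=r+O(r^3)$ near $0$, so the integrand is integrable at $0$ and $b$ is finite and $\le 0$. As $r\to\infty$, Structural Condition \ref{StructuralConditionxi2} says $\xi'/\xi$ is decreasing, so $(\xi/\xi')^{n-1}$ is bounded below by a positive constant on $[1,\infty)$. Hence $I_{r_0}(r)\to+\infty$, and $I_{r_0}:[0,\infty)\to[b,\infty)$ is an increasing homeomorphism. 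Its inverse is defined whenever the right-hand side lies in $[b,\infty)$; this holds along the orbit by construction. By uniqueness for the Cauchy problem, the explicit graph coincides with the trace of the orbit through $(r_0,\phi_0)$ as long as $\phi$ stays in the same open quadrant. The quadrant is preserved because $\dot\phi$ cannot change sign before $\phi$ reaches a multiple of $\pi/2$.

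Finally, the boundedness statement comes from the behaviour of the right-hand side as $\phi\to\pm\pi/2$. For $\alpha=1$, $\int_{\phi_0}^{\phi}\sin^{n-1}$ stays bounded. Therefore $I_{r_0}(r(\phi))$ is bounded, and since $I_{r_0}\to\infty$ only as $r\to\infty$, $r(\phi)$ is bounded. For $\alpha=1/n$, $\tan^{n-1}\varphi\sim(\pi/2-\varphi)^{-(n-1)}$ is not integrable for $n\ge 2$, so the integral diverges to $+\infty$ in absolute value. When $\phi$ moves toward $\pm\pi/2$ in the direction where $r$ increases (the sign is fixed by the quadrant), this forces $I_{r_0}(r)\to\infty$, hence $r\to\infty$.

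The main obstacle I expect is bookkeeping rather than analysis. I need to track signs in each quadrant (for instance in $Q_2$, $\cos\phi<0$ and $\cos^{1/\alpha-1}$ for $\alpha=1/n$ requires care with odd and even $n$). I also need to justify that the range constraint $\ge b$, which corresponds to the orbit possibly reaching $r=0$, is compatible with the formula; this matches Proposition \ref{proposition-yesendpoint} in the endpoint case.
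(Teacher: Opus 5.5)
Your proposal is correct and follows the paper's own route. The paper simply observes that $d\phi/dr=(\xi/\xi')^{n-1}c^{1/\alpha}\tan\phi\,\cos^{1/\alpha}\phi/\sin^{n}\phi$ is separable and integrates it. You additionally verify that $I_{r_0}$ is an increasing bijection onto $[b,+\infty)$, using $\xi'>0$, Structural Condition \ref{StructuralConditionrto0} near $0$ and the monotonicity of $\xi'/\xi$, and you derive the (un)boundedness claim, which the paper leaves implicit.

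One small slip: writing the orbit as a graph $r=r(\phi)$ requires $\dot\phi\neq 0$, not $\dot r\neq 0$. This also holds in the interior of each quadrant, since $\cos\phi\,\sin\phi\neq 0$ there.
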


\noindent \textit{Proof}. It follows from (\ref{ODESystem-Parallel}) that
\begin{align}
\label{phi'm=n}
    \frac{d\phi}{dr} = \left(\frac{\xi}{\xi'}\right)^{n-1} c^{1/\alpha} \tan\phi\frac{\cos^{\frac{1}{\alpha}}\phi}{\sin^n\phi},
\end{align}
a separable first order differential equation whose solution is given by (\ref{IntegralSolution-Parallel}). \hfill $\square$

\medskip





Now, we summarize the existence results for translating solitons in the case when $X$ is parallel vector field, which corresponds to Riemannian products $\mathbb{P}\times\mathbb{R}$.

\begin{theorem}
\label{ExistenceTheorem-m=nXparallel}
Let $\mathbb{P}^n\times \mathbb{R}$ be the Riemannian product for which  the functions $\xi$ and $\chi=1$ in \eqref{riem-amb-met} satisfy the structural conditions \ref{StructuralConditionChi1} to \ref{StructuralConditionrtoinfty}.  Let $\alpha\in\{1,1/n\}$. Then the  rotationally symmetric Gauss-Kronecker curvature solitons  with respect to the parallel vector field $X$ and $\alpha$ are classified as follows:
    \begin{itemize}
        \item[(a)] Vertical cylinders of the form $\partial B(o,r) \times \mathbb{R}$, for any $r>0$.
        \item[(b)] Bowl solitons, each one  given by a  strictly convex $C^2$ graph  contained in the half-space $\mathbb{P}\times [s_0, +\infty)$, where $s_0$ is the minimum of the function $s(r)$ that defines the graph. Moreover, each bowl soliton is contained in a vertical cylinder for $\alpha=1$, which asymptotes it; for $\alpha=1/n$, it is not contained in any vertical cylinder and $s(r)$ is defined over $\mathbb{P}$. Finally,  $s(r)$ satisfies \eqref{bowl-expansion} as $r\to 0$.
        \item[(c)] In the case when $n$ be even and $\alpha=1/n$,  there exists two one-parameter families $\mathcal{C}_0=\{\Sigma_{r_0}; \ r_0>0\}$ and $\mathcal{C}_1=\{\Sigma_{\phi_0}; \ \phi_0\in Q_1\}$ of properly embedded convex rotationally invariant translating solitons, with nonempty boundary and the following properties:
    \begin{itemize}
        \item[1.] For each $r_0>0$, the soliton $\Sigma_{r_0}$ is an unbounded graph contained in $\mathbb{P}\times [s_0,+\infty)$ where $s_0$ is its minimum height; $\Sigma_{r_0}$ is the graph of a function $s(r)$ defined on the complement of the ball $B(o,r_0)\times \{s_0\}\subset \mathbb{P}_{s_0}$; the graph is $C^2$ except at its boundary points, where it is  tangent to $\mathbb{P}_{s_0}$. Its orientation $N$ satisfies $ \langle X,N \rangle > 0$ everywhere. 
        \item[2.] For each $\phi_0\in Q_1$, the soliton $\Sigma_{\phi_0}$ is a conic graph contained in the half-space $\mathbb{P}\times [s_0,+\infty) \times \mathbb{P}$, where $s_0$ is the minimum of the function $s(r)$ in $\mathbb{P}$ that defines the graph and that satisfies $s'(r)=\tan\phi_0$; this function is strictly convex and $C^2$ at the points outside $\partial \Sigma_{\phi_0} = \partial B(o, r_0)\times \{s_0\}\subset \mathbb{P}_{s_0}$.
    \end{itemize}
        \item[(d)] For $n$ even and $\alpha=1$, there exist four one-parameter families $\mathcal{C}^\pm=\{\Sigma_{r_0}^\pm; \, r_0>0\}$, $\mathcal{C}^+_0=\{\Sigma_{\phi_0}^+; \, \phi_0 \in Q_1\cup Q_3\}$ and $\mathcal{C}^-_0=\{\Sigma_{\phi_0}^-; \, \phi_0 \in Q_2\cup Q_4\}$ of properly embedded convex rotationally invariant translating solitons, with the following properties:
    \begin{itemize}
        \item[1.] For each $r_0>0$, the soliton $\Sigma^{\pm}_{r_0}$ is contained in $\mathbb{P}\times [s_0,+\infty)$ and it is the graph of a function defined on the annulus $B(o, r_1) \backslash B(o, r_0) \subset \mathbb{P}$ for some $r_1>r_0$.
        \item[2.] Along their boundaries, $\Sigma^{\pm}_{r_0}$ is tangent to the horizontal hyperplane $\mathbb{P}_{s_0}$ at $r=r_0$ or $r=r_1$, respectively. Moreover, $\Sigma^+_{r_0}$ and $\Sigma^-_{r_0}$ asymptote the cylinders $\partial B(o, r_1)\times \mathbb{R}$ or $\partial B(o, r_0)\times \mathbb{R}$, respectively.
        \item[3.] For each $\phi_0\in Q_1 \cup Q_3$, the soliton $\Sigma_{\phi_0}^+$ is a conic graph whose profile curve $s(r)$ is defined for $r\in[0,r_1)$, $r_1>r_0$, where $s'(0) = \tan \phi_0$ and $\Sigma_{\phi_0}^+$ asymptotes the cylinder $\partial B(o, r_1) \times \mathbb{R}$.
        \item[4.] For each $\phi_0\in Q_2 \cup Q_4$, the soliton $\Sigma_{\phi_0}^-$ is a conic graph whose profile curve $s(r)$ is defined for $r\in(0,r_1]$, $r_1>r_0$, where $s'(r) = \tan\phi_0$ and $s'(r_1) = 0$. If $\phi_0=\pm\pi/2$, the graph of $s(r)$ asymptotes the line $r=0$. 
    \end{itemize}
        \item[(e)] For $n$  odd and $\alpha=1/n$, there exist two  one-parameter families $\mathcal{C}_1=\{\Sigma^+_{\phi_0}; \ \phi_0 \in Q_1\}$ and $\mathcal{C}_2=\{\Sigma^-_{\phi_0}; \ \phi_0 \in Q_2\}$ of properly embedded convex rotationally invariant  translating solitons, with the following properties:
    \begin{itemize}
        \item[1.] For each $\phi_0 \in Q_1$, the soliton $\Sigma^+_{\phi_0}$ is a conic graph of a function defined on $\mathbb{P}$ whose profile curve $s(r)$ satisfies $s'(0)=\tan \phi_0$ and  $\Sigma^+_{r_0}$ is unbounded and contained in $ \mathbb{P}\times [s_0,+\infty)$.
        \item[2.] For each $\phi_0 \in Q_2$, the soliton $\Sigma^-_{\phi_0}$ is a conic graph of a function defined in $\mathbb{P}$ whose profile curve $s(r)$ satisfies $s'(0)=\tan \phi_0$ and  $\Sigma^-_{r_0}$ is unbounded, $C^2$-singular at its minimum height, and contained in  $\mathbb{P}\times [s_0,+\infty)$.
    \end{itemize}
        \item[(f)] For $n$ odd and $\alpha=1$, there exist three  one-parameter families $\mathcal{C}=\{\Sigma^+_{r_-}; \ r_- \geq 0\}$, $\mathcal{C}_1=\{\Sigma^+_{\phi_0}; \ \phi_0 \in Q_1\}$ and $\mathcal{C}_2=\{\Sigma^-_{\phi_0}; \ \phi_0 \in Q_2\}$ of properly embedded convex and rotationally invariant translating solitons, with the following properties:
    \begin{itemize}
        \item[1.] For each $r_-\geq 0$, the soliton $\Sigma_{r_-}$ is an unbounded graph contained on $(B(o,r_+) \setminus B(0, r_-)) \times [s_0,+\infty)$ for some $r_+>r_0>r_-$. Moreover,  $\Sigma_{r_-}=\Sigma^+_{r_0} \cup \Sigma^-_{r_0}$, where $\Sigma^+_{r_0}\subset (B(o,r_+) \setminus B(0, r_0)) \times [s_0,+\infty)$ is tangent to $\mathbb{P}_{s_0}$ at $\partial\Sigma^{+}_{r_0} = \partial B(o,r_0)\times \{s_0\}\subset \mathbb{P}_{s_0}$ and asymptotes the cylinder $ \partial B(o,r_+)\times \mathbb{R}$, and $\Sigma^-_{r_0}\subset (B(o,r_0) \setminus B(0, r_-)) \times [s_0,+\infty)$ is tangent to $\mathbb{P}_{s_0}$ at $\partial\Sigma^{-}_{r_0} = \partial B(o,r_0)\times \{s_0\}\subset \mathbb{P}_{s_0}$ and asymptotes the cylinder $\partial B(o,r_-)\times \mathbb{R}$
        \item[2.] For each $\phi_0 \in Q_1$, there exists $r_1>0$ such that $\Sigma^+_{\phi_0}$ is a conic unbounded graph contained in $B(o,r_1)\times [s_0,+\infty)$ of a function defined on some ball $B(o,r_1) \subset\mathbb{P}$ whose profile curve $s(r)$ satisfies $s'(0)=\tan \phi_0$ and  $\Sigma^+_{\phi_0}$ asymptotes the cylinder $\partial B(o,r_1)\times \mathbb{R}$.
        \item[3.] For each $\phi_0 \in Q_2$, $\Sigma^-_{\phi_0}$ is a conic graph of a function defined in $B(o,r_1)$, for some $r_1>0$, whose profile curve $s(r)$ satisfies $s'(0)=\tan \phi_0$ and $\Sigma^-_{\phi_0}$ is tangent to $\mathbb{P}_{s_0}$ at  $\partial \Sigma^-_{\phi_0} = B(o,r_1)\times \{s_0\}$. Moreover, $\Sigma^-_{\phi_0}$ is unbounded if and only if $\phi_0= \pm \pi/2$. 
    \end{itemize}
    \end{itemize}
\end{theorem}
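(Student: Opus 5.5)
The plan is to work directly with the explicit integration of the reduced system \eqref{ODESystem-Parallel} given in Proposition \ref{PropXparallelGeneral}, instead of the barrier/comparison arguments used for Theorem \ref{ExistenceTheorem-m<nXnonparallel}. Since $\chi\equiv 1$ and $S_n\equiv 0$, the $\phi$-equation reads $\dot\phi=(\xi/\xi')^{n-1}c^{1/\alpha}\cos^{1/\alpha}\phi/\sin^{n-1}\phi$. Hence the lines $\phi=\pm\pi/2$ are lines of equilibria, for both $\alpha=1$ and $\alpha=1/n$. Along them $\dot r=0$ and $\langle X,N\rangle=0=S_n$, which yields the vertical cylinders of item (a). All remaining orbits lie in a single open quadrant, where $d\phi/dr\neq0$. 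Along them the separated equation \eqref{phi'm=n} integrates to
\begin{align*}
I_{r_0}(r)=c^{-1}\int_{\phi_0}^{\phi}\sin^{n-1}\varphi\,d\varphi \quad(\alpha=1),\qquad I_{r_0}(r)=c^{-n}\int_{\phi_0}^{\phi}\tan^{n-1}\varphi\,d\varphi \quad(\alpha=1/n).
\end{align*}
The profile height is recovered from $ds/dr=\tan\phi$. The classification then reduces to three questions about this relation: where each orbit starts, whether $r$ stays finite as $\phi\to\pm\pi/2$, and whether $s$ blows up there.

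For the bowl in item (b) I take the orbit with endpoint $(0,0)$ in $Q_1$, namely $I_0(r)=c^{-1/\alpha}\int_0^\phi(\cdot)$. Structural Condition \ref{StructuralConditionrto0} gives $I_0(r)=r^n/n+O(r^{n+2})$, and $\int_0^\phi\sin^{n-1}$ and $\int_0^\phi\tan^{n-1}$ both equal $\phi^n/n+O(\phi^{n+2})$. Therefore $\phi=c^{1/(n\alpha)}r+O(r^2)$, and integrating $s'=\tan\phi$ gives \eqref{bowl-expansion} with $\binom{n}{n}=1$. Near $r=0$, $C^2$ regularity and strict convexity follow exactly as in case (a) of Theorem \ref{ExistenceTheorem-m<nXnonparallel}, using \eqref{kappa-eq} with $\chi'=0$, which gives $\kappa_\tau=\dot\phi>0$ and $\kappa_\theta>0$.

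The dichotomy at $\phi\to\pi/2$ comes from comparing integrals. For $\alpha=1$, $\int_0^{\pi/2}\sin^{n-1}<\infty$, so $r\to r_1<\infty$. Here I use that $I_0$ is increasing and unbounded, since $\xi/\xi'\ge$ a positive constant for large $r$ by Structural Condition \ref{StructuralConditionrtoinfty}. Moreover $ds/d\phi=\tan\phi\,(\xi'/\xi)^{n-1}c^{-1}\sin^{n-1}\phi$ is not integrable at $\pi/2$. Thus $s\to\infty$ and the graph asymptotes $\partial B(o,r_1)\times\mathbb{R}$. For $\alpha=1/n$, $\int^{\pi/2}\tan^{n-1}=\infty$ forces $r\to\infty$, so the graph is entire.

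The families in (c)--(f) are obtained by repeating this procedure from the two other kinds of initial data.
\begin{itemize}
\item[$\bullet$] Orbits with an endpoint $(r_0,0)$ or $(r_0,\pm\pi)$, given by $I_{r_0}(r)=c^{-1/\alpha}\int_{0}^{\phi}(\cdot)$. These produce the surfaces tangent to $\mathbb{P}_{s_0}$ along $\partial B(o,r_0)$, which are $C^1$ but not $C^2$ there by \eqref{kappa-eq}.
\item[$\bullet$] Orbits starting at $(0,\phi_0)$, where the system is regular as in Proposition \ref{proposition-yesendpoint}. These produce the conic graphs with $s'(0)=\tan\phi_0$.
\end{itemize}
For $\alpha=1/n$ with $n$ even, Remark \ref{rmk-phasespace} restricts the phase space to $Q_1\cup Q_4$, and by the reflection of Remark \ref{remark-symmetries-reflection-orientation} only $Q_1$ is needed. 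For $\alpha=1$, or for $n$ odd, all quadrants occur. I then read off the direction of motion from the sign of $\cos^{1/\alpha}\phi/\sin^{n-1}\phi$ in each quadrant. In the quadrants where this factor has the sign opposite to $\cos\phi$, $r$ decreases as $\phi$ tends to $\pm\pi/2$.

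The step I expect to be the main obstacle is deciding, in these decreasing quadrants, whether an orbit reaches $r=0$, stops at some $r_->0$, or exits through $\phi=0,\pm\pi$. This is decided by comparing $c^{-1/\alpha}\int_{\phi_0}^{\pm\pi/2}$ with $b=I_{r_0}(0)\le0$. When $\alpha=1$ the integral is finite, so both outcomes occur depending on $r_0$. This gives the annular pieces asymptotic to $\partial B(o,r_\pm)\times\mathbb{R}$ in (d) and (f), and the threshold $r_-\ge0$ in (f). When $\alpha=1/n$ the integral diverges, so every orbit reaches the boundary $r=0$ or $r=\infty$. I would first tabulate, quadrant by quadrant and by parity of $n$, the sign of $d\phi/dr$ and the finiteness of the two integrals. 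Then I would glue orbit pieces with matching orientation at points $(r_0,0)$, as in the proof of case (c) of Theorem \ref{ExistenceTheorem-m<nXnonparallel}. Boundedness of $s$ is settled at the end by the integrability of $\tan\phi\,dr$ near $\phi=\pm\pi/2$, as in the last display of the proof of (d) there.
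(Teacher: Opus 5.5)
Your plan is essentially the paper's proof. Both rely on the explicit integration of Proposition \ref{PropXparallelGeneral}, on the dichotomy $\int_0^{\pi/2}\sin^{n-1}\varphi\,d\varphi<\infty$ versus $\int_0^{\pi/2}\tan^{n-1}\varphi\,d\varphi=\infty$, on gluing orbits with matching orientation at $(r_0,0)$, and on the non-integrability of $\tan\phi\,dr$ near $\phi=\pm\pi/2$. The one real difference is in (d) and (f). There the paper obtains the threshold radius from the monotone, continuous dependence of endpoints (Remark \ref{rmk-continuityofendpoints-orbits}). You read it off the first integral $I_0(r)-c^{-1/\alpha}\int_{\phi_b}^{\phi}(\cdot)$, where $\phi_b\in\{0,\pm\pi\}$ is the boundary angle of the quadrant. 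This gives the threshold explicitly, e.g.\ $I_0(r_n)=c^{-1}\int_0^{\pi/2}\sin^{n-1}\varphi\,d\varphi$ for $\alpha=1$. It also shows directly that your two kinds of initial data exhaust all orbits.

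One correction: your rule for the quadrants in which $r$ decreases as $\phi\to\pm\pi/2$ is wrong as stated. Along an orbit $dr/d\phi=\cos\phi/\dot\phi$, and moving toward $\pm\pi/2$ means $\phi$ moves in the direction given by the sign of $\sin\phi\cos\phi$. Hence $r$ decreases toward $\pm\pi/2$ exactly when the sign of $\dot\phi$, i.e.\ of $\cos^{1/\alpha}\phi/\sin^{n-1}\phi$, is opposite to that of $\sin\phi$, not $\cos\phi$. Equivalently, $\cos^{1/\alpha}\phi$ and $\sin^n\phi$ must have opposite signs, which is the condition of Proposition \ref{proposition-orbitScapingQ}. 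Your rule agrees with this in $Q_1$ and $Q_3$ but is reversed in $Q_2$ and $Q_4$:
\begin{itemize}
\item For $\alpha=1$ in $Q_2$, the factor and $\cos\phi$ are both negative. Yet $\dot r<0$ and $\dot\phi<0$, so $r$ decreases toward $\pi/2$; this is the behaviour behind $\Sigma^-_{r_0}$ in (d).
\item For $n$ even and $\alpha=1$ in $Q_4$, they have opposite signs. Yet $r$ increases toward $-\pi/2$, as it must, since $Q_4$ is the reflection of $Q_1$.
\end{itemize}
With the correct rule the decreasing quadrants are $Q_2,Q_3$ for $n$ even and $\alpha=1$, and $Q_2,Q_4$ for $n$ odd. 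For $n$ even and $\alpha=1/n$ there are none. With that fixed, your tabulation and the comparison with $b=I_{r_0}(0)$ go through as you describe.
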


\noindent \textit{Proof of Theorem \ref{ExistenceTheorem-m=nXparallel}.} (a) The unique equilibrium points of (\ref{ODESystem-Parallel}) are $(r_0,\pm\pi/2)$, where $r_0$ is a positive constant. These points correspond to vertical cylinders.

\medskip

\noindent (b)
Take the limits $r_0\to0$ and $\phi_0\to0$ in (\ref{IntegralSolution-Parallel}) and we have
\begin{align*}
        r(\phi) =
         \begin{cases}
        I_{0}^{-1}\left(c^{-1}\int_{0}^{\phi} \sin^{n-1} \varphi \, d\varphi   \right) \ \ \mbox{ if } \alpha=1, \\
        I_{0}^{-1}\left( c^{-n}\int_{0}^{\phi} \tan^{n-1} \varphi \, d\varphi   \right) \ \ \mbox{ if } \alpha=1/n.
    \end{cases}.
    \end{align*}
The orbit whose trace is the graph of the above function is the one corresponding to the bowl soliton of the Gaussian curvature. It is strictly convex by \eqref{kappa-eq}. Since $\int_{0}^{\pi/2} \sin^{n-1}\phi \, d\phi$ converges and $\int_{0}^{\pi/2} \tan^{n-1}\phi \, d\phi$ diverges, the bowl soliton is contained in a cylinder, if $\alpha=1$, or it is a graph on $\mathbb{P}$, if $\alpha=1/n$.
A straightforward computation shows that, if $H(\phi) = \int_{0}^{\phi} \sin^{n-1}\varphi d\varphi$, for $\alpha=1$, then
\begin{align*}
    H(0) = H'(0) = \dots= H^{(n-1)}(0)=0, H^{(n)}(0) = (n-1)! 
\end{align*}
what implies that
\begin{align*}
    \frac{r^n}{n} + O(r^{n+1}) = I_0(r) = \frac{1}{c}H(\phi) = \frac{1}{c}\left(\frac{H^{(n)}(0)}{n!}\phi^n + O(\phi^{n+1})\right) =\frac{1}{nc}\phi^{n}+O(\phi^{n+1})
\end{align*}
By \eqref{IntegralSolution-Parallel}, $\lim_{r\to0} \phi(r) =0$, therefore for all $\delta>0$ there exists $\epsilon>0$ such that $0 < r < \delta$ yields
\begin{align*}
    \frac{\phi^n}{r^n}(1-\epsilon) < c +O(r) < \frac{\phi^n}{r^n}(1+\epsilon)
\end{align*}
Finally, $\alpha=1 \Rightarrow$ $\phi'(0) = c^{1/n}$. For $\alpha=1/n$, $\phi'(0) = c$ and the proof is essentially the same. Finally, as $r\to0$ we have \eqref{bowl-expansion}.

\medskip

\noindent (c) Since $\cos\phi>0$, we consider $\phi\in Q_1 \cup Q_4$.
Let us consider an orbit in $\mathbb{R}_+ \times Q_1$ with an endpoint at $(r_0,0)$. By Proposition \ref{PropXparallelGeneral}, this orbit is a graph of a function $\phi(r)$ for $r \geq r_0$. This orbit corresponds to $\Sigma_{r_0}$. An orbit with an endpoint $(0,\phi_0)$, $\phi_0\in(0,\pi/2)$, is again the graph of a function $\phi(r)$ for $r\geq 0$. This orbit corresponds to $\Sigma_{\phi_0}$. It is unbounded, since 
\begin{align*}
    \frac{ds}{dr}= \tan\phi\Rightarrow s(r)-s(r_1)=\int_{r_1}^r \tan\phi \, d\rho > C \int_{r_1}^{r}d\rho \underset{r\to+\infty}{\longrightarrow}+\infty
\end{align*}

\medskip

\noindent (d)
Let us consider an orbit in $\mathbb{R}_+ \times Q_1$ with an endpoint at $(r_0,0)$. By Proposition \ref{PropXparallelGeneral}, this orbit is a graph of a function $\phi(r)$ for $r_0\leq r\leq r_1$ with the other endpoint at $(r_1,\pi/2)$. This orbit corresponds to $\Sigma_{r_0}^+$. An orbit with an endpoint $(0,\phi_0)$, $\phi_0\in(\pi/2,\pi)$, has the other endpoint at some $(r_1,\pi)$, where $r_1(\phi_0)$ is a continuous decreasing function. See Remark \ref{rmk-continuityofendpoints-orbits}. Therefore, $r_n\doteq\lim_{\phi_0\to\pi/2} r_1(\phi_0)$ is such that an orbit with an endpoint at $(r_1,\pi)$, $r_1>r_n$, has the other endpoint at $(r_0,\pi/2)$, $0<r_0<r_1$. This orbit corresponds to $\Sigma_{r_0}^{-}$. An orbit with an endpoint at $(r_1,\pi)$, $r_1<r_n$, has another endpoint at some $(0,\phi_0)$, $\phi_0\in Q_2$. This orbit corresponds to $\Sigma^-_{\phi_0}$. The other orbit with an endpoint at $(0,\phi_0)$, $\phi_0\in Q_1$, corresponds to $\Sigma^+_{\phi_0}$.

Now we show that $s(r)$ is unbounded. We fix $\phi\in Q_1$. The other cases are similar. Let $r_0<r_-<r \leq r_1$. Then $r\to r_1$ $\Leftrightarrow$ $\phi\to \pi/2$ and
\begin{align*}
&     s(r) - s(r_-) = \int_{r_-}^{r} \frac{ds}{dr} d\rho 
    \\
    & = \int_{r_-}^{r}\tan\varphi \, d\rho 
    = \frac{1}{c}\int_{\phi(r_-)}^{\phi(r)}\left(\frac{\xi^\prime(\rho)}{\xi(\rho)}\right)^{n-1} \frac{\sin^n\varphi}{ \cos \varphi} \, d\varphi\geq \int_{\phi(r_-)}^{\phi} \Lambda \frac{\sin^n\varphi}{\cos\varphi}\, d\varphi \underset{\phi\to\pi/2}{\longrightarrow} +\infty,
\end{align*}
where
\begin{align*}
    \Lambda = \min_{r\in[r_0,r_1]} \left\{ \frac{1}{c} \left(\frac{\xi^\prime(r)}{\xi(r)}\right)^{n-1} \right\}.
\end{align*}

\noindent (e)
We claim that an orbit in $\mathbb{R}_+ \times Q_2$ or $\mathbb{R}_+ \times Q_4$ must have an endpoint at some $(0,\phi_0)$, where $\phi_0\in Q_2$ or $\phi_0\in Q_4$, respectively. Suppose that the orbit in $\mathbb{R}_+\times Q_4$ has an endpoint at $(r_-,-\pi/2)$ for some $r_->0$. Observe that, for $r>r_-$, we have by \eqref{phi'm=n} that
$$
    +\infty =\left| \int_{\phi}^{-\pi/2} |\tan \varphi|^{n-1} \, d\varphi \right|= c^n \left|\int_{r}^{r_-} \left(\frac{\xi(\rho)}{\xi^\prime(\rho)}\right)^{n-1}\, d\rho \right|,
$$
which is a contradiction. The orbits in $\mathbb{R}_+ \times Q_1$ and $\mathbb{R}_+ \times Q_4$ with same endpoint $(r_0,0)$ match orientation and their union corresponds to $\Sigma_{\phi_0}^-$, where $\phi_0\in Q_4$ and $(0,\phi_0)$ is the other endpoint of the orbit in $\mathbb{R}_+\times Q_4$. An orbit with an endpoint at $(0,\phi_0)\in \mathbb{R}_+\times Q_1$ is contained in $\mathbb{R}_+\times Q_1$ by Proposition \ref{PropXparallelGeneral} and corresponds to $\Sigma_{\phi_0}^+$. Finally, both $\Sigma^-_{\phi_0}$ and $\Sigma^+_{\phi_0}$ are unbounded by Proposition \ref{PropXparallelGeneral}.

\medskip

\noindent (f) 
An orbit in $\mathbb{R}_+\times Q_4$ with an endpoint at $(r_-,-\pi/2)$, $r_->0$, has the other endpoint at some $(r_0,0)$, $r_+>r_0$, by Proposition \ref{PropXparallelGeneral}. This orbit corresponds to $\Sigma^-_{r_0}$. Now the orbit in $\mathbb{R}_+\times Q_1$ with an endpoint at $(r_0,0)$ has an endpoint at $(r_+,\pi/2)$, $r_-<r_+<r_+$. This orbit corresponds to $\Sigma^+_{r_0}$ and these orbits match orientation at $(r_0,0)$. An orbit with an endpoint at $(0, \phi_0)$, $\phi_0\in Q_1$, converges to some $(r_1,\pi/2)$, $r_1>0$. This orbit corresponds to $\Sigma_{\phi_0}^{+}$. The analogous in $\mathbb{R}_+\times Q_2$ gives $\Sigma^{-}_{\phi_0}$. This finishes the proof of the theorem. \hfill $\square$

\begin{proposition}
    The only rotationally symmetric translating solitons with respect to $X$ are open sets of the solutions described by Theorems \ref{ExistenceTheorem-m<nXnonparallel} and \ref{ExistenceTheorem-m=nXparallel}, up to reflection \eqref{reflection-symmetry}, if $m$ is even, or reversion of orientation \eqref{reversion-of-orientation}, if $m$ is odd.
\end{proposition}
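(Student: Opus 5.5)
Any rotationally symmetric soliton reduces to a trajectory of the autonomous system \eqref{ODE-Matrix} in the phase space $\Theta_{m,\alpha}$. I would therefore prove the proposition by showing that every orbit of that system has already been accounted for.

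First I reduce to the phase portrait. A rotationally invariant hypersurface $M$ has a profile curve $\tau\mapsto(r(\tau),s(\tau))$, which I parametrize by arc length. On the open set where $M$ is not tangent to a slice $\mathbb{P}_s$ (that is, $\phi\not\equiv 0 \pmod \pi$), the computations of Section \ref{Section3} show that the soliton equation \eqref{sol-eq-scalar} is equivalent to the system \eqref{Soliton-AngleAlpha}. Hence $(r,\phi)$ traces an arc of some orbit of \eqref{ODE-Matrix}, and $s$ is recovered from $\chi\dot s=\sin\phi$ up to a vertical translation, which is an isometry preserving the equation. Points where $\phi\equiv 0 \pmod\pi$ lie on $\partial\Theta_{m,\alpha}$. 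They can only occur as endpoints of orbits, since a slice is not a soliton and $\phi\equiv k\pi$ on an open set would force $\langle X,N\rangle = \pm\chi$ while $S=0$. Consequently $M$ is a union of open pieces, each corresponding to an arc of an orbit. By uniqueness for the Cauchy problem, each piece is an open subset of the hypersurface generated by the maximal orbit through any one of its points.

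Next I check that the maximal orbits have all been classified. The sign constraints of Remark \ref{rmk-phasespace} limit the possible quadrants, and Remark \ref{remark-symmetries-reflection-orientation} lets me work modulo reflection ($m$ even) or reversion of orientation ($m$ odd). I then case-split on the behaviour of an orbit through an arbitrary point $(r_0,\phi_0)$.
\begin{itemize}
\item \emph{Leftward end.} Following $\gamma^-$, the orbit ends at $r=0$ or on $\phi\equiv 0\pmod\pi$. Ending at $r=0$ forces $\phi\in\{0,\pm\pi\}$ when $m<n$ (Proposition \ref{propostion-noendpoint}); for $m=n$ any $\phi_0$ is allowed (Proposition \ref{proposition-yesendpoint}). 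Ending on $\phi\equiv0$ at some $r>0$ is covered by the uniqueness in Proposition \ref{proposition-endpoint-raxis}.
\item \emph{Rightward end.} Propositions \ref{proposition-orbitTrapped} and \ref{proposition-orbitScapingQ} decide whether $\gamma^+$ stays in its quadrant for all $r$ or crosses $\phi=\pm\pi/2$ and leaves.
\end{itemize}
Every orbit is therefore determined by one of the following data: an endpoint $(r_0,k\pi)$, a crossing point $(r_1,\pm\pi/2)$, an endpoint $(0,\phi_0)$ when $m=n$, or the bowl endpoint $(0,0)$, which is unique by the monotonicity argument with $\partial G/\partial y<0$ in case (a). These are exactly the parameters of the families $\Sigma_0$, $\mathcal{C}^1,\dots,\mathcal{C}^4$, $\mathcal{C}^i_n$ in Theorem \ref{ExistenceTheorem-m<nXnonparallel}. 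For $X$ parallel and $m=n$, the explicit integration in Proposition \ref{PropXparallelGeneral} gives every orbit in closed form, and the equilibria $(r,\pm\pi/2)$ give the cylinders. This matches Theorem \ref{ExistenceTheorem-m=nXparallel}.

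The step I expect to be the main obstacle is gluing at $\phi\equiv0\pmod\pi$. A connected soliton may pass through a boundary point of $\Theta_{m,\alpha}$ and continue along a different orbit. I will show that the continuation is forced. At an endpoint $(r_0,k\pi)$ there is exactly one orbit on each side with that endpoint (Proposition \ref{proposition-endpoint-raxis}). Among these, only pairs whose orientations match yield a $C^1$ hypersurface satisfying \eqref{sol-eq-scalar}, as in the constructions of $\Sigma^3_{r_1}$ and $\Sigma^4_{r_1}$; the remaining pairing is the reflection or orientation-reversed copy. Rotational symmetry together with the fact that the tangency set is $\partial B(o,r_0)\times\{s_0\}$ then shows that $M$ is an open subset of one of the listed hypersurfaces.
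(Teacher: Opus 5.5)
Your argument is the paper's argument: a rotational soliton is an open piece of the hypersurface generated by an orbit of \eqref{ODE-Matrix}, and the orbit types are exhausted by the families in Theorems \ref{ExistenceTheorem-m<nXnonparallel} and \ref{ExistenceTheorem-m=nXparallel}. The paper states this in one line, while you also spell out the end-behaviour case analysis and the gluing at horizontal tangencies. One small slip: $\Sigma^4_{r_1}$ is a single orbit through $(r_1,\pm\pi/2)$, not a gluing, and for $m$ even no orientation-consistent gluing at $\phi\equiv 0 \pmod \pi$ exists at all, since there $|\dot\phi|\to\infty$ with $\phi$ moving away from $k\pi$ on both sides, so only the $\Sigma^3$-type gluings for $m$ odd occur.
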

\noindent \textit{Proof.} Each rotationally symmetric translating soliton corresponds to a subset of an orbit and all the possible classes of orbits are fully described in the statements of Theorems \ref{ExistenceTheorem-m<nXnonparallel} and \ref{ExistenceTheorem-m=nXparallel}. \hfill $\square$

\begin{remark}
    In \cite{Ronaldo}, de Lima and Pipoli consider translators in $\mathbb{R}^{n}\times \mathbb{R}$ and $\mathbb{H}^n\times \mathbb{R}$ for $\alpha=1$. In their Theorem 1, they obtain families of rotationally symmetric translators $\mathcal{C}_r$, for $r<n$ odd, and $\mathcal{C}_r^1$, $\mathcal{C}_r^2$, for $r<n$ even, which correspond to $\mathcal{C}^3$, $\mathcal{C}^1$ and $\mathcal{C}^4$, respectively, obtained in our Theorem \ref{ExistenceTheorem-m<nXnonparallel}. Their methods are different from ours, and their translators in $\mathcal{C}^2_r$ are defined only for $\langle X,N \rangle \geq0$, being zero at its boundary, where it is orthogonal to horizontal plane. Our translators in $\mathcal{C}^4$ have a neck, where $\langle X,N \rangle = 0$ and it is orthogonal to the horizontal plane, and contains a region where $\langle X,N \rangle < 0$, being $-1$ at its boundary, where it is tangent to the horizontal plane. See Figure \ref{fig:profile-curves-m-even}.
\end{remark}

\subsection{Asymptotic limits}\label{asymptotic}

In this section, we describe the asymptotic behavior of translating solitons in terms of the  function
\begin{equation}
    \label{Gamma}
    \Gamma(r) \doteq \cos \Phi_{m, \alpha}(r),
\end{equation}
where the function $\Phi_{m, \alpha}$ is defined in \eqref{f-Phima}.

\begin{lemma}
\label{lemma-Gammabehavior}
     For either $m<n$ or $m=n$ and $X$ nonparallel, $\Gamma(r)$ is a nonincreasing positive function that converges to some $\Gamma_\infty\in[0,1)$. For $m<n$, it holds that
     \begin{itemize}
         \item 
     If the metric is asymptotic to the Euclidean model $\mathbb{R}^{n+1}$, then $\Gamma(r)= r^{-m\alpha}/(c\chi_\infty) + O(r^{-(m\alpha+1)})$; 
     \item If the metric is asymptotic to the Riemannian product $\mathbb{H}^n\times\mathbb{R}$ model, then $\Gamma(r) \searrow \Gamma_\infty$, where $\Gamma_\infty\in(0,1)$; and
     \item If the metric is asymptotic to the Hyperbolic model $\mathbb{H}^{n+1}$, then $\Gamma(r) =  e^{-r}[S_\infty^{\alpha}/c+O(r^{-1})]$. 
          \end{itemize}
     Moreover, if $m=n$ and the metric is asymptotic to the hyperbolic model, we also conclude that $\Gamma(r) = e^{-r}[S_\infty^{\alpha}/c+O(r^{-1})]$.
\end{lemma}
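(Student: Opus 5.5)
The plan is to work directly from the defining relation \eqref{Gamma-function}, namely $f(\Gamma(r)) = S_m(r)/(c\chi(r))^{1/\alpha}$, together with Lemma \ref{lemma-fdiffeomorphism} and the asymptotics of $S_m$ in Remark \ref{Sm-behavior}.

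\emph{Monotonicity and the limit.} By Remark \ref{Sm-behavior}, $S_m$ is non-increasing. By Structural Condition \ref{StructuralConditionChi1}, $\chi$ is non-decreasing. Hence the right-hand side $S_m/(c\chi)^{1/\alpha}$ is non-increasing. It is also positive: for $m<n$ both terms in \eqref{Sm-formula-general} are positive since $\xi'>0$ on a Hadamard manifold, and for $m=n$ with $X$ nonparallel, $\chi'\geq 0$ is not identically zero.

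Since $f:(0,1)\to(0,\infty)$ is an increasing diffeomorphism, $\Gamma=f^{-1}(\cdot)$ is positive and nonincreasing. It therefore converges to some $\Gamma_\infty = f^{-1}(L)\in[0,1)$, where $L=\lim S_m/(c\chi)^{1/\alpha}$, with the convention $f^{-1}(0)=0$. The limit is strictly less than $1$ because $L<\infty$: the limit $S_m(\infty)$ is finite in every model, and $\chi\geq 1$.

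\emph{Asymptotic expansions.} The key expansion is $f(x)=x^{1/\alpha}(1+O(x^2))$ as $x\to 0$, which inverts to $f^{-1}(y)=y^{\alpha}(1+O(y^{2\alpha}))$. Each model is then handled by inserting the corresponding asymptotics of $S_m$.

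In the Euclidean model, $S_m={n-1\choose m}r^{-m}+O(r^{-m-1})$ and $\chi=\chi_\infty+O(r^{-1})$. This gives $S_m/(c\chi)^{1/\alpha}\to 0$ at rate $r^{-m}$, so $\Gamma = (S_m)^{\alpha}/(c\chi_\infty)\,(1+O(r^{-1}))$, which is of order $r^{-m\alpha}$. Here I will keep track of the constant ${n-1\choose m}^{\alpha}$: the statement's leading coefficient $1/(c\chi_\infty)$ appears to absorb or omit it, and I will record it as written up to that normalization.

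In the $\mathbb{H}^n\times\mathbb{R}$ model, $S_m\to S_\infty>0$ and $\chi\to\chi_\infty$. Hence $L=S_\infty/(c\chi_\infty)^{1/\alpha}>0$, which gives $\Gamma_\infty=f^{-1}(L)\in(0,1)$, approached monotonically from above.

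In the hyperbolic model, $\chi=e^r+O(r^{-1})$ and $S_m=S_\infty+O(e^{-r})$, so $S_m/(c\chi)^{1/\alpha} = S_\infty c^{-1/\alpha}e^{-r/\alpha}(1+O(r^{-1}e^{-r}))$. Applying $f^{-1}$ yields $\Gamma = S_\infty^{\alpha}e^{-r}/c\,\bigl(1+O(e^{-2r})+O(r^{-1}e^{-r})\bigr)$, which is within the stated $e^{-r}[S_\infty^\alpha/c+O(r^{-1})]$.

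The case $m=n$ in the hyperbolic model is identical, since Remark \ref{Sm-behavior} gives the same form $S_n=S_\infty+O(e^{-r})$.

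\emph{Main obstacle.} The main difficulty is bookkeeping the error terms through $f^{-1}$ and the power $\alpha\in\{1,1/m\}$. For $\alpha=1/m$, the correction $O(y^{2\alpha})$ decays slowly, but it is still dominated by the stated error orders, and I will check this for each model. I will also need to verify positivity of $S_m$ for $m=n$ where $\chi'$ may vanish on an initial interval; in that case positivity holds only eventually, and $\Gamma\geq 0$ still suffices.
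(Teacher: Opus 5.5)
Your proposal is correct and follows essentially the same route as the paper's proof: monotonicity and the limit come from $f(\Gamma)=S_m/(c\chi)^{1/\alpha}$ and Lemma \ref{lemma-fdiffeomorphism}, and the expansions come from inverting $f(x)=x^{1/\alpha}(1+O(x^2))$ near $0$ with the asymptotics of Remark \ref{Sm-behavior}; you are also right that the Euclidean leading coefficient should be ${n-1\choose m}^{\alpha}/(c\chi_\infty)$, a factor the paper's proof drops as well. Two small fixes: in the hyperbolic case the relative error is $O(e^{-r})$ (coming from $S_m=S_\infty+O(e^{-r})$), which is still within the stated bound; and for $m=n$ your positivity worry is backwards, because $\xi'\chi'/(\xi\chi)\geq 0$ is non-increasing by Structural Condition \ref{StructuralConditionxi2}, so in the nonparallel case $\chi'>0$ near $0$ and can only vanish on a terminal interval, where $\Gamma\equiv 0$ would contradict positivity (this is why the paper tacitly assumes $\chi'>0$).
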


\noindent  \textit{Proof}. 
By Remark \ref{Sm-behavior}, the function $S_m(r)/(c\chi)^{1/\alpha}$ is a nonincreasing positive function, therefore it converges to zero or to a positive number as $r\to+\infty$. Since $f(\Gamma)=S_m(r)/(c\chi)^{1/\alpha}$ and $f$ is an increasing function, we conclude that $\Gamma(r)\searrow\Gamma_\infty\in[0,1)$. 

Again by Remark \ref{Sm-behavior}, for a metric asymptotic to the Euclidean model $\mathbb{R}^{n+1}$ and $m<n$, $f(\Gamma(r))=S_m(r)/(c\chi)^{1/\alpha}= r^{-m}/(c\chi_\infty)^{1/\alpha} + O(r^{-(m+1)})\searrow0$. Since $f(x)\approx x^{\frac{1}{\alpha}}$ for $x$ sufficiently closed to 0, $\Gamma(r)= r^{-m\alpha}/(c\chi_\infty) + O(r^{-(m\alpha+1)})$ as $r\to+\infty$. 
 For a metric asymptotic to the Riemannian product $\mathbb{H}^{n}\times\mathbb{R}$ model and $m<n$, $f(\Gamma(r))=S_m(r)/(c\chi(r))^{1/\alpha}\searrow S_\infty>0\Rightarrow \Gamma(r)\searrow\Gamma_\infty \in(0,1)$ .
For a metric asymptotic to the Hyperbolic model $\mathbb{H}^{n+1}$ and $m\leq n$, $f(\Gamma(r))=S_m(r)/(c\chi(r))^{1/\alpha} =S_\infty/(c e^{r})^{1/\alpha} + O(e^{-r/\alpha}/r) \searrow0 \Rightarrow \Gamma(r)=e^{-r}[S_\infty^{\alpha}/c+O(r^{-1})]\searrow 0$. \hfill $\square$

\begin{proposition}
    Let $h(r)$ be a positive function, $h\geq1$, such that
    \begin{align*}
        \frac{\Gamma(r)(1-\Gamma^2(r))}{1+\Gamma^2(r)(m\alpha-1)}\left[\log \left(\frac{\chi^{1/\alpha}(r)}{S(r)}\right)\right]^\prime = o\left(\frac{1}{h(r)}\right) \ \mbox{ and } \
        \frac{h'(r)}{h(r)}\to0
    \end{align*}
    as $r\to+\infty$. Then, for $\alpha\in\{1/m,1\}$ and either $m\in\{2,\dots, n-1\}$ or $m=n$, $X$ nonparallel and the metric asymptotic to the hyperbolic model, 
    \begin{align*}
        y(r) = \Gamma(r) + o\left(\frac{1}{h(r)}\right).
    \end{align*}
    For a metric asymptotic to $\mathbb{R}^{n+1}$, the function $h(r)=r^{(m+\delta)\alpha}$, for $\delta\in(0,1)$, satisfies the hypotheses above. For a metric asymptotic to $\mathbb{H}^n\times\mathbb{R}$ or to $\mathbb{H}^{n+1}$, the function $h(r)=e^{r^\delta}$, for $\delta\in(0,1)$, satisfy the hypotheses above.
\end{proposition}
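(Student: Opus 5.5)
Here $y(r)=\cos\varphi(r)$ is the profile of an orbit that stays in $\mathbb{R}_+\times Q_1$ for all large $r$ (bowl, $\mathcal C^1$, $\mathcal C^2$, or the $Q_1$ branch of $\mathcal C^3$). By \eqref{G-ODE-yr},
\[
y'=A(r)\,\frac{1-y^2}{y}\,\bigl[f(\Gamma)-f(y)\bigr],\qquad
A(r)={n-1\choose m-1}^{-1}\Bigl(\frac{\xi}{\xi'}\Bigr)^{m-1}(c\chi)^{1/\alpha}.
\]
The plan is to compare $y$ with the moving equilibrium curve $\Gamma$. I will show that the deviation $u=y-\Gamma$ obeys a linear-type inequality with a strong restoring term, and that $\Gamma'$ is a small forcing of order $o(1/h)$.

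\textbf{Step 1: equation for $u$.} By the mean value theorem, $f(\Gamma)-f(y)=-f'(\eta)\,u$ with $\eta$ between $y$ and $\Gamma$. Hence
\[
u'=-B(r)\,u-\Gamma',\qquad B=A\,\frac{1-y^2}{y}\,f'(\eta)>0 .
\]
Differentiating $f(\Gamma)=S_m/(c\chi)^{1/\alpha}$ and using the formula for $f'$ in Lemma \ref{lemma-fdiffeomorphism} gives
\[
\Gamma'=-\frac{\alpha\,\Gamma(1-\Gamma^2)}{1+\Gamma^2(m\alpha-1)}\Bigl[\log\frac{\chi^{1/\alpha}}{S_m}\Bigr]'.
\]
So the first hypothesis says exactly $\Gamma'=o(1/h)$.

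\textbf{Step 2: lower bound on $B$.} I need $B\ge b_0>0$ for large $r$. On $\{y>\Gamma\}$ this is handled by $\tfrac{1-y^2}{y}f'(\eta)\ge (1-y^2)f(\eta)/(\alpha\,\eta\, y)$. On the other side I use monotonicity: Proposition \ref{proposition-orbitTrapped} and the sign analysis show that $y$ stays in $(0,1)$ and eventually stays away from $1$. The factor $A$ is bounded below because $\xi/\xi'$ is increasing or $\to1$ and $\chi\ge1$. The main obstacle is near $0$: when $\Gamma\to0$ (Euclidean or hyperbolic model), $f'(\eta)\sim \eta^{1/\alpha-1}/\alpha$ degenerates for $\alpha=1/m$. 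There I will instead use the ratio form $(f(\Gamma)-f(y))/y$ together with the convexity of $f$. I will also use the growth of $A$: it grows like $r^{m-1}$ in the Euclidean case and like $e^{r/\alpha}$ in the hyperbolic case, which should compensate for $f'(\eta)\sim\Gamma^{1/\alpha-1}$. Concretely, I will check that $A\,\Gamma^{1/\alpha-1}/\Gamma\gtrsim A\,f(\Gamma)/\Gamma^2\sim S_m/\Gamma^2$, and that this is bounded below using Lemma \ref{lemma-Gammabehavior}. If this bound fails I will fall back to showing $B\ge b_0\,h'/h$-independent of decay, which is enough for Step 3.

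\textbf{Step 3: Grönwall with weight $h$.} Set $v=h\,u$. Then
\[
v'=\Bigl(\frac{h'}{h}-B\Bigr)v-h\,\Gamma' .
\]
Since $h'/h\to0$ and $B\ge b_0$, for $r\ge R$ we get $v'\le -\tfrac{b_0}{2}v+o(1)$ where $v>0$, and the symmetric inequality where $v<0$. The standard comparison/variation-of-constants estimate then gives $\limsup|v|\le \limsup \tfrac{2}{b_0}|h\Gamma'|=0$, that is, $y=\Gamma+o(1/h)$.

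\textbf{Step 4: the examples of $h$.} For the Euclidean model, Lemma \ref{lemma-Gammabehavior} and Remark \ref{Sm-behavior} give $\Gamma\sim r^{-m\alpha}$ and $[\log(\chi^{1/\alpha}/S_m)]'=O(1/r)$. So the left side is $O(r^{-m\alpha-1})$, which is $o(r^{-(m+\delta)\alpha})$ since $\delta\alpha<1$; moreover $h'/h=O(1/r)$. For the $\mathbb H^n\times\mathbb R$ and $\mathbb H^{n+1}$ models, the remainders $O(r^{-2})$ and $O(e^{-r})$ in Remark \ref{Sm-behavior} make the bracket decay like $O(r^{-2})$ or better, after differentiating the expansions (I assume the $O$-terms may be differentiated). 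The constant part of $\log\chi$ and $\log S$ has vanishing derivative, while $\log\chi^{1/\alpha}=r/\alpha$ in the hyperbolic case is offset by the factor $\Gamma\sim e^{-r}$. This is $o(e^{-r^\delta})$ only where the decay is exponential. In the product model I expect to need the actual decay of the error terms, so this is a point to verify. Finally, $h'/h=\delta r^{\delta-1}\to0$.
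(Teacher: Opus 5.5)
Your Steps 1 and 3 are, in substance, the paper's proof. The paper also sets $\psi=y-\Gamma$ and splits $\psi'$ into the restoring term $A\frac{1-y^2}{y}[f(\Gamma)-f(y)]$ plus a forcing term equal to $-\Gamma'$. Your formula for $\Gamma'$ is correct; the sign printed in the paper is off, which is harmless. It then multiplies by $h$ and uses that the restoring coefficient beats $h'/h$. The paper closes with a two-stage barrier/contradiction argument: first $\psi\to0$, then $h\psi\to0$, with $\limsup$ and $\liminf$ treated separately. Your variation-of-constants estimate $\limsup|v|\le\limsup|h\Gamma'|/\liminf(B-h'/h)$ does both at once and is cleaner. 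The price is that you need $\liminf B>0$ along the whole orbit. The paper's first stage lets it estimate the restoring coefficient only once $y$ is already close to $\Gamma$, via $f'(\Gamma)$ and the growth of $A$.

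That uniform bound is exactly what Step 2 leaves open, and the concrete check you propose is mis-simplified. One has $Af(\Gamma)/\Gamma^2={n-1\choose m-1}^{-1}(\xi/\xi')^{m-1}S_m/\Gamma^2$. Without the factor $(\xi/\xi')^{m-1}$, the quantity $S_m/\Gamma^2$ behaves like $r^{2-m}\to0$ in the Euclidean model for $\alpha=1/m$, $m\ge3$. A global bound is elementary:
\begin{itemize}
\item For $\alpha=1$, $f'(x)=\frac{1+(m-1)x^2}{(1-x^2)^{(m+2)/2}}\ge1$, so $B\ge A(1-y^2)$.
\item For $\alpha=1/m$, write $f(x)=x^m(1-x^2)^{-m/2}$, where the second factor is increasing and $\ge1$. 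Then $|f(y)-f(\Gamma)|\ge|y^m-\Gamma^m|$, and $\frac{y^m-\Gamma^m}{(y-\Gamma)\,y}\ge\Gamma^{m-2}$ whether $y>\Gamma$ or $y<\Gamma$. Hence $B\ge A(1-y^2)\Gamma^{m-2}$.
\end{itemize}
You argue correctly that $y\le1-\epsilon$ eventually. Moreover, $A\Gamma^{m-2}$ is of order $r$, $e^{2r}$ and $1$ in the Euclidean, hyperbolic and product models respectively. So $B\ge b_0>0$, and your Step 3 goes through.

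On Step 4, the paper's proof does not verify the examples of $h$ at all. Your Euclidean and hyperbolic checks are fine under the same tacit assumption the paper makes in Remark \ref{Sm-behavior}, namely that the expansions may be differentiated.

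Your doubt about the $\mathbb{H}^n\times\mathbb{R}$ model is justified. There $\Gamma\to\Gamma_\infty\in(0,1)$, so the hypothesis demands $[\log(\chi^{1/\alpha}/S_m)]'=o(e^{-r^\delta})$. This does not follow from $\chi=\chi_\infty+O(r^{-1})$ and $S_m=S_\infty+O(r^{-2})$. Suppose, say, $\chi_\infty-\chi\sim 1/r$. Then $-\Gamma'$ is of exact order $r^{-2}$. Since $B$ tends to a positive limit, the variation-of-constants formula from your Step 3 shows that $y-\Gamma$ is itself of exact order $r^{-2}$. So $h=e^{r^\delta}$ cannot work in that generality. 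The claim does hold for the exact product ($\chi\equiv1$, $\xi=\sinh r$), where the bracket equals $2m/\sinh 2r$.
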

\noindent \textit{Proof}.
Let us fix an orbit in $\mathbb{R}_+\times (0,1]$ for \eqref{G-ODE-yr}. By Lemma \ref{lemma-Gammabehavior}, there exist $R,\epsilon>0$ such that $r>R$ yields $0\leq y < 1-\epsilon$. We define
\begin{align*}
    \psi := y-\Gamma.
\end{align*}
A straightforward computation shows that
\begin{align*}
    \psi' = \frac{1}{{n-1 \choose m-1}} \left(\frac{\xi}{\xi'}\right)^{m-1} (c\chi)^{1/\alpha}\frac{1-y^2}{y} [f(\Gamma)-f(y)]
    +\alpha\frac{\Gamma(1-\Gamma^2)}{1+\Gamma^2(m\alpha-1)} \left[\log \left( \frac{S_m(r)}{\chi^{1/\alpha}} \right)\right]'.
\end{align*}
Suppose that $\underset{r\to+\infty}{\limsup}\, \psi(r) = \delta$ for some $\delta>0$. Then, for all $r_1>0$ there exists $r_*>r_1$ such that
$\psi(r_*)>\delta.$

For $\alpha=1$, $f'(x)$ attains its minimum at some $x_0\in[0,1)$ and $f'(x_0)>0$, since $f\in C^1$, $\underset{x\to0}{\lim}\,f'(0)=1$, $f'(x)>0$ for $x\in(0,1)$ and $\underset{x\to1}{\lim}f'(x)=+\infty$. Therefore, for $x_1<x_2$ and $x_2-x_1>\delta$,
\begin{align*}
    f(x_2)-f(x_1)=\int_{x_1}^{x_2}f'(x)dx  \geq f'(x_0)(x_2-x_1)>f'(x_0)\delta.
\end{align*}
For $\alpha=1/m$, since $f(x)$ is a strictly increasing convex function in $[0,1)$, $x\mapsto f(x+\delta)-f(x-\delta)$ is a positive increasing function wherever it is defined. Therefore,
\begin{align*}
    f(x_2)-f(x_1)\geq f(x_2-x_1)-f(0)>f(\delta)>0.
\end{align*}
In any case, there exists $\delta'>0$ such that 
\begin{align*}
    y(r_*)-\Gamma(r_*) >\delta \Rightarrow f(y(r_*))-f(\Gamma(r_*)) >\delta'.
\end{align*}
Therefore
\begin{align*}
    \psi'(r_*) <& -\frac{\delta'}{{n-1 \choose m-1}} \left(\frac{\xi(r_*)}{\xi'(r_*)}\right)^{m-1} (c\chi(r_*))^{1/\alpha}\frac{1-y^2(r_*)}{y(r_*)} \\
    &+\alpha\frac{\Gamma(r_*)(1-\Gamma^2(r_*))}{1+\Gamma^2(r_*)(m\alpha-1)}\left[\log \left( \frac{S_m(r)}{\chi^{1/\alpha}} \right)\right]'_{r=r_*} < -\delta'' <0
\end{align*}
for some $\delta''>0$ and $r_*>r_2>r_1$, $r_2$ sufficiently large, since $\left(\xi/\xi'\right)^{m-1} \chi^{1/\alpha}(1-y^2)/y$ is bounded from below and 
\begin{align*}
    \frac{\Gamma(1-\Gamma^2)}{1+\Gamma^2(m\alpha-1)}\left[\log \left( \frac{S_m(r)}{\chi^{1/\alpha}} \right)\right]'\to0
\end{align*}
as $r\to+\infty$. Define
\begin{align*}
    r_3 := \sup \{r>r_* | \, \psi(t) >\delta \ \forall \ t\in(r_*,r)  \}.
\end{align*}
Then
\begin{align*}
    \psi(r_3) =\delta \ \ \  \psi'(r_3) \leq -\delta''.
\end{align*}
Define
\begin{align*}
    r_4 := \sup \{r>r_3 | \, \psi(t) <\delta \ \forall \ t\in(r_3,r)  \}.
\end{align*}
since $\limsup \psi(r) > \delta$, $r_4$ is finite. Then
\begin{align*}
    \psi(r_4) = \delta \Rightarrow \psi'(r_4) \leq -\delta'',
\end{align*}
which implies $\psi(r)>\delta$ for $r<r_4$ sufficiently closed to $r_4$, which is a contradiction. Analogously, we prove that $\liminf_{r\to + \infty} \psi(r) = 0$ and we conclude that
\begin{align*}
    \lim_{r\to+\infty} \psi(r) = 0.
\end{align*}
Now we estimate the rate of convergence $\psi\to0$. Let $h(r)$ be a function as in the statement of the theorem and set
\begin{align*}
    \lambda = h\psi.
\end{align*}
Therefore
\begin{align*}
    \lambda' =& \frac{1}{{n-1 \choose m-1}} \left(\frac{\xi}{\xi'}\right)^{m-1} (c\chi)^{1/\alpha}\frac{1-y^2}{y} [f(\Gamma)-f(y)]h \\
    &+\alpha\frac{\Gamma(1-\Gamma^2)}{1+\Gamma^2(m\alpha-1)}\left[\log \left( \frac{S_m(r)}{\chi^{1/\alpha}} \right)\right]'h
    + \frac{h'}{h} h \psi. \\
    =&\left[ {n-1 \choose m-1}^{-1} \left(\frac{\xi}{\xi'}\right)^{m-1} (c\chi)^{1/\alpha} \frac{1-y^2}{y}  \frac{f(\Gamma) - f(y)}{\Gamma - y} 
    - \frac{h^\prime}{h}\right] (-\lambda) 
    \\
    &+\alpha\frac{\Gamma(1-\Gamma^2)}{1+\Gamma^2(m\alpha-1)}\left[\log \left( \frac{S_m(r)}{\chi^{1/\alpha}} \right)\right]'h.
\end{align*}
Suppose that $\limsup \lambda(r) > \delta >0$. Then for all $r_1> 0$ there exists $r_*>r_1$ such that
\[
    \lambda(r_*) > \delta.
\]
Then $y>\Gamma$ in $r=r_*$. For $\alpha=1$,
\[
    f(y) - f(\Gamma) > f'(x_0)(y-\Gamma),
\]
where $f'(x_0)>0$ is the minimum of $f'$. For $\alpha=1/m$, since $f(x)$ is an increasing convex function, we have for $x_1<x_2$ that
\[
    f'(x_1) < \frac{f(x_2)-f(x_1)}{x_2-x_1}< f'(x_2).
\]
Therefore
\[
    \frac{f(y)-f(\Gamma)}{y-\Gamma} > f'(\Gamma) = \frac{m}{\Gamma(1-\Gamma^2)}f(\Gamma) \ \ \ \mbox{for } \alpha=1/m.
\]
If the metric asymptotes $\mathbb{H}^n\times \mathbb{R}$ model, $f'(\Gamma)$ converges to a positive constant. If the metric asymptotes the $\mathbb{R}^{n+1}$ model, $\Gamma= O(r^{-1})$ as $r\to + \infty$, therefore $\lim_{r\to 0 } y(r) = 0$ and
\begin{align*}
    -\left(\frac{\xi}{\xi'}\right)^{m-1} (c\chi)^{m} \frac{1-y^2}{y}  \frac{f(\Gamma) - f(y)}{\Gamma - y} <-\left(\frac{\xi}{\xi'}\right)^{m-1}  (c\chi)^{m} \frac{1-y^2}{y}  f'(\Gamma) 
    \to -\infty.
\end{align*}
If the metric asymptotes $\mathbb{H}^{n+1}$ model,
\begin{align*}
    -\left(\frac{\xi}{\xi'}\right)^{m-1} (c\chi)^{m} \frac{1-y^2}{y}  \frac{f(\Gamma) - f(y)}{\Gamma - y} < -\left(\frac{\xi}{\xi'}\right)^{m-1} (c\chi)^{m} \frac{1-y^2}{y}  f'(\Gamma) \\
    = -|O(e^{2r})|\to-\infty.
\end{align*}
In any case, there exists $\delta'>0$ such that
\[
    \lambda(r_*) < - \delta' <0,
\]
which leads to a contradiction. Analogously, $\liminf \lambda(r)< - \delta <0$ leads to a contradiction and the proof is complete. \hfill $\square$

\section{Some explicit solutions in terms of integrals}
\label{Section5}

In the particular case of scalar flow curvature solitons (that is, $m=2$), it is possible to obtain explicit graphical parameterizations of these solitons in terms of integrals involving the geometric data $\xi$ and $\chi$.

\begin{proposition}
\label{PropAnaliticalpha2}
    There is a one-parameter family of rotationally symmetric 2-curvature flow solitons with respect to a Killing vector field $X$  for $\alpha=1/2$ and $n\geq 3$ whose profile curves are given by
    \begin{align}
    \label{Analitic-alpha2nGeral}
        s(r)= \int_{r_0}^{r} \sqrt{\frac{\xi^{n-2}G(\lambda)}{C_0+\int_{0}^{\lambda} (\chi^2(\rho)\xi^{n-2}(\rho))^\prime G(\rho) \, d\rho} -\frac{1}{\chi^2(\lambda)}} \, d\lambda,
    \end{align}
    where $C_0\in\mathbb{R}$, $r_0\geq0$ is the minimum radius which the integrand is defined, $r\geq r_0$ and
    \begin{equation}
    \label{Gn}
        G(r) = \exp \bigg(\frac{2c^2}{n-1}\int^r_0 \chi^2(\varrho)\frac{\xi(\varrho)}{\xi'(\varrho)}d\varrho\bigg). 
    \end{equation}
    The bowl soliton is given when $C=0$ and $r_0=0$, whose profile curve is the graph of \eqref{Analytic-Bowl-m2alpha2}. For $C_0<0$,
    \begin{align}
    \label{Analitic-alpha2nC2}
        s(r) = \int_{r_0}^{r} \sqrt{\frac{\xi^{n-2}G(\lambda)}{\int_{r_0}^{\lambda} (\chi^2(\rho)\xi^{n-2}(\rho))^\prime G(\rho) \, d\rho} -\frac{1}{\chi^2(\lambda)}} \, d\lambda, \ \  r\geq r_0,
    \end{align}
which correspond to the profile curves of rotational translating solitons in the family $\mathcal{C}^2$.
For $C_0>0$,
\begin{align}
\label{Analitic-alpha2nC1}
    s(r) = \int_{r_0}^{r} \sqrt{\frac{\xi^{n-2}G(\lambda)}{\chi^2(r_0)\xi^{n-2}(r_0)G(r_0)+\int_{r_0}^{\lambda} (\chi^2(\rho)\xi^{n-2}(\rho))^\prime G(\rho) \, d\rho} -\frac{1}{\chi^2(\lambda)}} \, d\lambda, \ \  r\geq r_0,
\end{align}
which correspond to the profile curves of rotational translators in the family $\mathcal{C}^1$.
\end{proposition}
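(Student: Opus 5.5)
The plan is to reduce the third equation of \eqref{Soliton-AngleAlpha}, for $m=2$ and $\alpha=1/2$, to a \emph{linear} first order ODE in $q:=\cos^2\phi$, regarded as a function of $r$. We then integrate this ODE explicitly and read off $s(r)$ from $\chi\, ds/dr=\tan\phi$.

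On an arc where the profile is a graph over $r$ we have $\dot\phi=\cos\phi\,\phi'(r)$, and with $m=2$, $1/\alpha=2$ the system becomes
\begin{align*}
(n-1)\frac{\xi'}{\xi}\sin\phi\cos\phi\,\phi'=c^2\chi^2\cos^2\phi-S_2(r)\sin^2\phi .
\end{align*}
Since $q'=-2\sin\phi\cos\phi\,\phi'$, this is the linear equation
\begin{align*}
q'=-\frac{2}{n-1}\frac{\xi}{\xi'}\Big(c^2\chi^2q-S_2(1-q)\Big).
\end{align*}
The key observation is that, by \eqref{Sm-formula-general} with $m=2$,
\begin{align*}
\frac{2}{n-1}\frac{\xi}{\xi'}S_2=2\frac{\chi'}{\chi}+(n-2)\frac{\xi'}{\xi}=\frac{(\chi^2\xi^{n-2})'}{\chi^2\xi^{n-2}}.
\end{align*}
Also, $G$ in \eqref{Gn} satisfies $G'/G=\frac{2c^2}{n-1}\chi^2\xi/\xi'$. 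Setting $D:=\chi^2\xi^{n-2}G\,q$, a direct computation gives
\begin{align*}
D'=(\chi^2\xi^{n-2})'Gq+(\chi^2\xi^{n-2})'G(1-q)=(\chi^2\xi^{n-2})'G,
\end{align*}
so $D(r)=C_0+\int_0^r(\chi^2\xi^{n-2})'G$. Then
\begin{align*}
\chi^2 (s')^2=\tan^2\phi=\frac1q-1=\frac{\chi^2\xi^{n-2}G}{D}-1,
\end{align*}
which yields \eqref{Analitic-alpha2nGeral} after choosing the nonnegative root, i.e.\ $\langle X,N\rangle>0$, consistent with Remark \ref{rmk-phasespace}.

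Next comes the case analysis in $C_0$. Here $r_0$ is the smallest radius with $0<q\le 1$, equivalently $0<D\le\chi^2\xi^{n-2}G$.
\begin{itemize}
\item If $C_0=0$, then $D(0)=0$, and one checks that $q\to1$ as $r\to0$ using Structural Condition \ref{StructuralConditionrto0} (both $D$ and $\chi^2\xi^{n-2}G$ behave like $r^{n-2}$). So the curve starts horizontally at $r_0=0$. By the uniqueness of the bowl orbit proven in case (a) of Theorem \ref{ExistenceTheorem-m<nXnonparallel}, this is the bowl.
\item If $C_0<0$, $D$ vanishes at some $r_0>0$ where it first becomes positive, so $q\to0^+$ and $\phi\to\pi/2$. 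This gives the orbit starting at $(r_0,\pi/2)$, i.e.\ $\mathcal C^2$, and rewriting $C_0=-\int_0^{r_0}(\chi^2\xi^{n-2})'G$ gives \eqref{Analitic-alpha2nC2}.
\item If $C_0>0$, the integrand vanishes at the radius $r_0$ where $q=1$, i.e.\ $D(r_0)=\chi^2\xi^{n-2}G(r_0)$. This gives the orbit ending at $(r_0,0)$, i.e.\ $\mathcal C^1$, and \eqref{Analitic-alpha2nC1} follows by rewriting $C_0$ accordingly.
\end{itemize}
Matching with the families of Theorem \ref{ExistenceTheorem-m<nXnonparallel}(b) uses the uniqueness statements of Proposition \ref{proposition-endpoint-raxis} and of case (a).

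The main obstacle is the endpoint bookkeeping. We must show that for each sign of $C_0$ the admissible interval really is $[r_0,\infty)$, i.e.\ that $0<q\le1$ persists for $r>r_0$. I would get this from Proposition \ref{proposition-orbitTrapped} (orbits in $Q_1$ stay in $Q_1$) rather than by estimating the integrals directly. The case $C_0=0$ near $r=0$ additionally needs the small-$r$ asymptotics, which I would handle by L'Hôpital.
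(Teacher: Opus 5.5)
Your proposal is correct and follows the paper's proof. The paper likewise rewrites the $m=2$, $\alpha=1/2$ equation as the linear ODE \eqref{ODE-Linear} in $y^2=\cos^2\phi$, integrates it with the factor $\chi^2\xi^{n-2}G$, and splits by the sign of $C_0$, fixing $r_0$ implicitly exactly as you do. Your endpoint analysis is more detailed than the paper's, and the persistence of $0<q\le 1$ on $[r_0,\infty)$ also follows directly from $D'=(\chi^2\xi^{n-2})'G>0$ and $(\chi^2\xi^{n-2}G-D)'=\chi^2\xi^{n-2}G'>0$, without Proposition \ref{proposition-orbitTrapped}. One small correction: the nonnegative root does not encode $\langle X,N\rangle>0$, since $\langle X,N\rangle=\chi\cos\phi\ge 0$ holds automatically for $m$ even and $\alpha=1/m$; the sign of the root only selects $Q_1$ over $Q_4$, which fixes the reflection \eqref{reflection-symmetry}.
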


\noindent \textit{Proof}. Expression \eqref{G-ODE-yr} yields
\begin{align}
    y'=\frac{1}{n-1}\frac{\xi}{\xi'}(c\chi)^2\frac{1-y^2}{y}\left[ \frac{S_2(r)}{(c\chi)^2} - \frac{y^2}{1-y^2} \right],
    \end{align}
    what implies that
\begin{align}
    (y^2)'+\frac{2}{n-1}\frac{\xi}{\xi'}\left[S_2(r)+(c\chi)^2\right]y^2
    = \frac{2}{n-1}\frac{\xi}{\xi'}S_2(r),
    \label{ODE-Linear}
\end{align}
a linear first order ODE in $y^2$. Solving it and using
\begin{align*}
    y^2=\cos^2\phi=\frac{1}{1+\tan^2\phi}=\frac{1}{1+(\chi s')^2},
\end{align*}
we obtain expression (\ref{Analitic-alpha2nGeral}), where $C_0\in\mathbb{R}$ is a constant of integration, $r_0\geq0$ is the minimum radius where the integrand is defined and $\chi^2(r)\xi^{n-2}(r)G(r)$ is an integrating factor of \eqref{ODE-Linear}, the function $G(r)$ given by \eqref{Gn}.

Now we split our analysis into three cases, according to the sign of $C_0$. The case $C_0=0$ corresponds to bowl soliton, where $r_0=0$. Replacing into (\ref{Analitic-alpha2nGeral}), one gets
\begin{align}
\label{Analytic-Bowl-m2alpha2}
    s(r) = \pm\int_{0}^{r} \sqrt{\frac{\xi^{n-2}G(\lambda)}{\int_{0}^{\lambda} (\chi^2(\rho)\xi^{n-2}(\rho))^\prime G(\rho) \, d\rho} -\frac{1}{\chi^2(\lambda)}} \, d\lambda, \ \ r\geq 0.
\end{align}
For $C_0<0$ we define $r_0>0$ implicitly by
\begin{align*}
    C_0=-\int_{0}^{r_0} (\chi^2(\rho)\xi^{n-2}(\rho))'G(\rho) \, d\rho.
\end{align*}
Replacing this expression into \eqref{Analitic-alpha2nGeral}, one obtains  \eqref{Analitic-alpha2nC2}.
Finally, $C_0>0$ one defines $r_0>0$ implicitly by
\begin{align*}
    C_0=\chi^2(r_0)\xi^{n-2}(r_0)G(r_0) -\int_{0}^{r_0} (\chi^2(\rho)\xi^{n-2}(\rho))'G(\rho) \, d\rho.
\end{align*}
Replacing this expression  into \eqref{Analitic-alpha2nGeral}, one gets  \eqref{Analitic-alpha2nC1}. \hfill $\square$

\medskip

In the particular case when $m=n=2$, the solitons described above are translating solitons for the Gaussian curvature of surfaces. The previous proposition can be rewritten in this case as in the following propositions.

\begin{proposition}
    There is a one-parameter family of rotationally symmetric Gaussian curvature flow solitons with respect to a nonparallel $X$ vector field for $\alpha=1/2$ and $n = 2$ whose profile curves are given by
    \begin{align}
    \label{Analitic-alpha2n2Nonparallel}
        s(r)= \int_{r_0}^{r} \sqrt{\frac{G(\lambda)}{C_0+\int_{0}^{\lambda} (\chi^2(\rho))^\prime G(\rho) \, d\rho} -\frac{1}{\chi^2(\lambda)}} \, d\lambda,
    \end{align}
    where $C_0\in\mathbb{R}$, $r_0\geq0$ is the minimum radius which the integrand is defined, $r\geq r_0$ and $G(r)$ is given by \eqref{Gn}.
    For $C_0<0$,
    \begin{align}
    \label{Analitic-alpha2nC2-n2Nonparallel}
        s(r) = \int_{r_0}^{r} \sqrt{\frac{G(\lambda)}{\int_{r_0}^{\lambda} (\chi^2(\rho))^\prime G(\rho) \, d\rho} -\frac{1}{\chi^2(\lambda)}} \, d\lambda, \ \  r\geq r_0,
    \end{align}
which correspond to the profile curves of rotational translators in $\mathcal{C}^2$.
For $C_0>1$,
\begin{align}
\label{Analitic-alpha2nC1-n2Nonparallel}
    s(r) = \int_{r_0}^{r} \sqrt{\frac{G(\lambda)}{\chi^2(r_0)G(r_0)+\int_{r_0}^{\lambda} (\chi^2(\rho))^\prime G(\rho) \, d\rho} -\frac{1}{\chi^2(\lambda)}} \, d\lambda, \ \  r\geq r_0,
\end{align}
which correspond to the profile curves of rotational translators in $\mathcal{C}^1$.
For $0\leq C_0 \leq 1$,
\begin{align}
\label{Analitic-alpha2nC1-n2NonparallelBowl}
    s(r) = \int_{0}^{r} \sqrt{\frac{G(\lambda)}{\cos^2\phi_0+\int_{0}^{\lambda} (\chi^2(\rho))^\prime G(\rho) \, d\rho} -\frac{1}{\chi^2(\lambda)}} \, d\lambda, \ \  r\geq 0,
\end{align}
where $\phi_0\in[0,\pi/2]$ is the angle that the graph hits $r$-axis at $(r,s)=(0,0)$. The bowl soliton corresponds to \eqref{Analitic-alpha2nC1-n2NonparallelBowl} when $\phi_0=0$.
\end{proposition}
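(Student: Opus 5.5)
The plan is to repeat the derivation of Proposition \ref{PropAnaliticalpha2}, now with $m=n=2$. For $\alpha=1/2$ and $m=n=2$ we have $\binom{n-1}{m-1}=1$ and, by Notation \ref{notation-Sm}, $S_2(r)=\frac{\chi'}{\chi}\frac{\xi'}{\xi}$. Equation \eqref{G-ODE-yr} for $y=\cos\phi$ becomes
\[
y'=\frac{\xi}{\xi'}(c\chi)^2\frac{1-y^2}{y}\Big[\frac{S_2}{(c\chi)^2}-\frac{y^2}{1-y^2}\Big].
\]
Multiplying by $2y$ turns this into the linear equation
\[
(y^2)'+2\frac{\xi}{\xi'}\big[S_2+(c\chi)^2\big]y^2=2\frac{\xi}{\xi'}S_2 .
\]
Since $\frac{\xi}{\xi'}S_2=\chi'/\chi$, the coefficient of $y^2$ is $2\chi'/\chi+2c^2\chi^2\xi/\xi'$. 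An integrating factor is therefore $\chi^2 G$, where $G$ is given by \eqref{Gn} with $n-1=1$. This gives
\[
(\chi^2Gy^2)'=2\chi\chi' G=(\chi^2)'G,
\]
and integrating from $0$,
\[
\chi^2(r)G(r)y^2(r)=C_0+\int_0^r(\chi^2)'G\,d\rho .
\]
Substituting $y^2=1/(1+(\chi s')^2)$ and solving for $s'$ yields
\[
s'^2=\frac{G}{C_0+\int_0^r(\chi^2)'G}-\frac{1}{\chi^2}.
\]
Integrating this gives \eqref{Analitic-alpha2n2Nonparallel}, where $r_0$ is the smallest radius at which the radicand is nonnegative, i.e.\ where $y^2\le1$.

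Next comes the split by the value of $C_0$. By Structural Condition \ref{StructuralConditionrto0} we have $\chi(0)=1$ and $G(0)=1$, so at $r=0$ the identity reads $y^2(0)=C_0$. This is why the threshold here is $1$ instead of $0$, in contrast with Proposition \ref{PropAnaliticalpha2}. In that case the factor $\xi^{n-2}$ vanished at $0$; now it is absent.

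If $0\le C_0\le1$, the solution reaches $r=0$ with $\cos^2\phi_0=C_0$, which is the conic family with an endpoint at $(0,\phi_0)$ given by Proposition \ref{proposition-yesendpoint}. Writing $C_0=\cos^2\phi_0$ gives \eqref{Analitic-alpha2nC1-n2NonparallelBowl}, and $\phi_0=0$ is the bowl of Lemma \ref{orbit-bowl-Xparallel}.

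If $C_0<0$, the right-hand side $C_0+\int_0^r(\chi^2)'G$ is increasing in $r$ because $\chi'\ge0$. It first vanishes at some $r_0>0$, defined by $C_0=-\int_0^{r_0}(\chi^2)'G$. There $y=0$, so the curve is vertical, which is the catenoid-type family $\mathcal{C}^2$; rewriting the integral from $r_0$ gives \eqref{Analitic-alpha2nC2-n2Nonparallel}.

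If $C_0>1$, then $y^2>1$ near $0$, so the curve only starts at the radius $r_0$ where $y^2=1$. This $r_0$ is defined by $C_0=\chi^2(r_0)G(r_0)-\int_0^{r_0}(\chi^2)'G$, and at that point the curve is tangent to $\mathbb{P}_{s_0}$, which is the family $\mathcal{C}^1$; this gives \eqref{Analitic-alpha2nC1-n2Nonparallel}.

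The main obstacle is justifying the endpoints. For $C_0>1$ one must check that $\chi^2Gy^2$ minus the integral actually crosses the value required to reach $y^2=1$ exactly once. This follows because $\frac{d}{dr}\big(\chi^2G-\int_0^r(\chi^2)'G\big)=\chi^2G'>0$. The identification of each case with its family then follows from uniqueness of orbits through a given endpoint (Propositions \ref{proposition-endpoint-raxis} and \ref{proposition-yesendpoint}).
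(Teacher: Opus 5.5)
Your proposal is correct and follows the paper's route. The paper's proof just says it is the same as Proposition \ref{PropAnaliticalpha2}: the linear equation for $y^2$ with integrating factor $\chi^2\xi^{n-2}G$ (here $\chi^2G$), followed by the case split on $C_0$. Your observation that $\chi^2G-\int_0^r(\chi^2)'G\,d\rho$ increases from $1$ supplies the endpoint justification, and explains the threshold $1$, which the paper leaves implicit. As in the paper, the existence of $r_0$ for $C_0<0$ tacitly requires $\int_0^\infty(\chi^2)'G\,d\rho>-C_0$, which you assert rather than check.
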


\noindent \textit{Proof}. The proof is the same as in Proposition \ref{PropAnaliticalpha2}.

\hfill $\square$

\begin{proposition}
    There is a one-parameter family of rotationally symmetric translating solitons for the Gaussian curvature with respect to a parallel vector field $X$ for $\alpha=1/2$ and $n=2$, whose profile curves are given by
    \begin{align}
    \label{G2-+}
        s(r) =  \int_{r_0}^{r} \sqrt{\frac{G(\rho)}{G(r_0)}-1} \, d\rho, \ \ \ r\geq r_0, \\
    \label{G2-Bowl}
        s(r) =  \int_{0}^{r} \sqrt{G(\rho)-1} \, d\rho, \ \ \ r\geq 0,\\
    \label{G2-Menos}
        s(r) =  \int_{0}^{r} \sqrt{\frac{G(\rho)}{\cos^2\phi_0}-1} \, d\rho, \ \ \ r\geq 0,
    \end{align}
    where $G(r)$ is given by \eqref{Gn} for $n=2$, \eqref{G2-+} is tangent to $\mathbb{P}_0$ and $r_0\in\mathbb{R}_+$, \eqref{G2-Bowl} is the bowl soliton and \eqref{G2-Menos} corresponds to a conic surface with an angle $\phi_0 \in (0,\pi/2)$ with respect to $r$-axis at $r=0$.
\end{proposition}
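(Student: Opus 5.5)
The plan is to specialize the linear reduction from the proof of Proposition \ref{PropAnaliticalpha2} to $n=m=2$, $\alpha=1/2$ and $\chi\equiv 1$. Here the equation for $y^2$ decouples completely and can be integrated in closed form. First I will record what the structure becomes. Since $X$ is parallel we have $\chi'\equiv 0$, so by \eqref{S-nformula-} the cylindrical curvature vanishes, $S_2(r)\equiv 0$. Moreover ${n-1\choose m-1}=1$ and $(c\chi)^{1/\alpha}=c^2$. With $y=\cos\phi$, equation \eqref{G-ODE-yr} (equivalently \eqref{phi'm=n} rewritten in $y$) becomes
\begin{align*}
y'=\frac{\xi}{\xi'}\,c^2\,\frac{1-y^2}{y}\Big[0-\frac{y^2}{1-y^2}\Big]=-c^2\frac{\xi}{\xi'}\,y .
\end{align*}
Hence $(y^2)'=-2c^2(\xi/\xi')\,y^2$. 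This is exactly \eqref{ODE-Linear} with $S_2=0$ and $\chi=1$, and it is now homogeneous.

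Integrating, and using \eqref{Gn} with $n=2$, that is $G(r)=\exp\big(2c^2\int_0^r \xi/\xi'\big)$, I get $y^2(r)\,G(r)=K$ for a constant $K>0$. Since $\chi=1$, the relation $y^2=1/(1+s'^2)$ gives
\begin{align*}
s'(r)=\tan\phi=\sqrt{\frac{G(r)}{K}-1},
\end{align*}
with the sign fixed up to the reflection \eqref{reflection-symmetry}. Because $\xi/\xi'>0$, the function $G$ is increasing with $G(0)=1$. Therefore the radicand is nonnegative exactly for $r\ge r_0$, where $G(r_0)=K$ whenever $K\ge 1$. For $K<1$ it is nonnegative for all $r\ge 0$.

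Next I will split into cases according to $K$ and identify each case with the initial data of an orbit.
\begin{itemize}
\item[(i)] If $K=G(r_0)$ with $r_0>0$, then $y(r_0)=1$, i.e.\ $\phi(r_0)=0$. This is the orbit with endpoint $(r_0,0)$, the surface is tangent to $\mathbb{P}_0$ at $\partial B(o,r_0)$, and integrating from $r_0$ gives \eqref{G2-+}.
\item[(ii)] If $K=1$, then $r_0=0$ and $y(0)=1$. This is the orbit ending at $(0,0)$, i.e.\ the bowl of Theorem \ref{ExistenceTheorem-m=nXparallel}(b), and it gives \eqref{G2-Bowl}.
\item[(iii)] If $K=\cos^2\phi_0$ with $\phi_0\in(0,\pi/2)$, then $y(0)=\cos\phi_0$. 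This is the orbit ending at $(0,\phi_0)$, i.e.\ the conic soliton $\Sigma_{\phi_0}$ of Theorem \ref{ExistenceTheorem-m=nXparallel}(c), and it gives \eqref{G2-Menos}.
\end{itemize}
In every case I normalize the additive constant in $s$ so that the minimum height is $0$. Conversely, every rotational solution with $\cos\phi>0$ (the only admissible sign here, by Remark \ref{rmk-phasespace}, since $n$ is even and $\alpha=1/n$) satisfies $y^2G=K$ for some $K>0$. So these three cases exhaust the family.

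The main point needing care is regularity and the endpoint identifications, not the integration itself. At $r=0$, Structural Condition \ref{StructuralConditionrto0} gives $\xi/\xi'=r+O(r^3)$, so $G(r)=1+c^2r^2+O(r^4)$. In case (ii) this yields $s'\sim c\,r$, consistent with \eqref{bowl-expansion} for $m=n=2$ and $\alpha=1/2$. I will also check that each integrand is locally integrable at its lower endpoint: in case (i), $G(r)/G(r_0)-1\sim (G'(r_0)/G(r_0))(r-r_0)$. Finally, the solutions extend to all $r\ge r_0$ and the resulting graphs are unbounded, since $G\to\infty$; this agrees with Proposition \ref{PropXparallelGeneral}.
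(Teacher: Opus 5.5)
Your proposal is correct and follows the paper's route. The paper's proof is exactly the remark that \eqref{G2-+}, \eqref{G2-Bowl} and \eqref{G2-Menos} come from specializing the linear equation \eqref{ODE-Linear} to $\chi\equiv1$, $S_2\equiv0$ and integrating; you carry that integration out explicitly as $y^2G=K$ and split into the cases $K=G(r_0)>1$, $K=1$ and $K=\cos^2\phi_0<1$, and you add checks the paper leaves implicit (the expansion of $G$ near $r=0$ matching \eqref{bowl-expansion}, and unboundedness from $G\to\infty$).
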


\noindent \textit{Proof}. It is a straightforward computation to show \eqref{G2-+}, \eqref{G2-Bowl} and \eqref{G2-Menos} as particular cases of \eqref{ODE-Linear} for which $\chi\equiv1$ and $S_2\equiv0$. \hfill $\square$

\medskip

Still concerning Gaussian curvature solitons (that is, $m = n = 2$) for $\alpha=1$ and a parallel vector field $X$, we can exhibit the right-hand side of equation (\ref{IntegralSolution-Parallel}) in terms of $\tan\phi$ what makes possible to exhibit an explicit  integral expression for the profile curve $s(r)$.

\begin{proposition}
    Non-cylindrical,  rotationally symmetric  Gaussian curvature flow soliton with respect to a parallel $X$ vector field and  $\alpha=1$ are graphs of 
    \begin{align}
    \label{Analitic-alpha1n2}
        s(r) = \int_{r_0}^{r}  \sqrt{\left[c\int_{r_0}^{\rho} \frac{\xi(\lambda)}{\xi^\prime(\lambda)} \, d\lambda -\cos\phi_0\right]^{-2}-1} \, d\rho,
    \end{align}
    where $r_0>0$, $\tan\phi_0=s^\prime(r_0)$ and either $r\in[r_0,r_2)$ or $r\in(\max\{0,r_2\},r_0]$, depending on $\phi_0\in Q_1 \cup Q_4$ or $\phi_0\in Q_2 \cup Q_3$, respectively, where $r_2$ is defined implicitly by 
    \[
        c\int_{r_0}^{r_2} \frac{\xi(\lambda)}{\xi^\prime(\lambda)} \, d\lambda =\cos\phi_0.
    \]
\end{proposition}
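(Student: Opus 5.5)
Since $m=n=2$, $\chi\equiv 1$ and $S_2\equiv 0$, the plan is to specialize Proposition \ref{PropXparallelGeneral} with $\alpha=1$ and rewrite the implicit relation between $r$ and $\phi$ as an explicit formula for $\tan\phi$, hence for $s'(r)$.

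\emph{Step 1: reduce to the separable equation.} By \eqref{phi'm=n} with $n=2$, $\alpha=1$, the profile satisfies
\[
\frac{d\phi}{dr}=c\,\frac{\xi}{\xi'}\,\tan\phi\,\frac{\cos\phi}{\sin^2\phi}=\frac{c\,\xi/\xi'}{\sin\phi},
\]
that is, $\sin\phi\,d\phi=c\,(\xi/\xi')\,dr$. Integrating from $r_0$, where $\phi(r_0)=\phi_0$, gives
\[
\cos\phi_0-\cos\phi(r)=c\int_{r_0}^{r}\frac{\xi(\lambda)}{\xi'(\lambda)}\,d\lambda .
\]
This is exactly \eqref{IntegralSolution-Parallel} with $I_{r_0}$ and $H(\phi)=\int_{\phi_0}^{\phi}\sin\varphi\,d\varphi=\cos\phi_0-\cos\phi$. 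Hence
\[
\cos\phi(r)=\cos\phi_0-c\,I_{r_0}(r),
\]
or equivalently, up to the sign convention in the statement,
\[
\cos\phi = -\Big[c\,I_{r_0}(r)-\cos\phi_0\Big].
\]

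\emph{Step 2: recover $s$.} Since $\chi\equiv 1$, the system \eqref{Soliton-AngleAlpha} gives $s'(r)=\tan\phi$. Wherever $\cos\phi\neq 0$,
\[
\tan^2\phi=\cos^{-2}\phi-1=\big[c\,I_{r_0}(r)-\cos\phi_0\big]^{-2}-1 .
\]
Taking the square root and integrating from $r_0$ yields \eqref{Analitic-alpha1n2}, with $s(r_0)$ normalized to $0$. The sign of the root is fixed by the quadrant of $\phi$. This is where the convention ``up to reflection/reversion of orientation'' of Remark \ref{remark-symmetries-reflection-orientation} is used, so that $s$ is written with the positive root.

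\emph{Step 3: the maximal interval.} The formula is valid as long as $|\cos\phi|\le 1$ and $\cos\phi\neq0$, i.e.\ until $c\,I_{r_0}(r)=\cos\phi_0$, which defines $r_2$. Since $\xi/\xi'>0$, the function $I_{r_0}$ is strictly increasing, so $r_2$ is unique when it exists.
\begin{itemize}
\item If $\phi_0\in Q_1\cup Q_4$, then $\cos\phi_0>0$, so $r_2>r_0$ and the orbit runs over $[r_0,r_2)$. As $r\to r_2$ we get $\phi\to\pm\pi/2$, the integrand blows up, and the graph asymptotes the cylinder of radius $r_2$. This agrees with Theorem \ref{ExistenceTheorem-m=nXparallel}(d).
\item If $\phi_0\in Q_2\cup Q_3$, then $\cos\phi_0<0$, so $r_2<r_0$ (if it exists) and the domain is $(\max\{0,r_2\},r_0]$. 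When no such $r_2>0$ exists, the orbit reaches $r=0$, consistent with Proposition \ref{proposition-yesendpoint}.
\end{itemize}
Non-cylindrical orbits are exactly those with $\phi\not\equiv\pi/2$ (mod $\pi$); the cylinders are the equilibria of Theorem \ref{ExistenceTheorem-m=nXparallel}(a). So every non-cylindrical rotational soliton arises this way.

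The main obstacle is the bookkeeping in Steps 2 and 3: the sign of the square root, and the orientation of the interval relative to $r_0$ across the four quadrants. In particular, one must check that $|c\,I_{r_0}(r)-\cos\phi_0|<1$ throughout the stated interval, so that the radicand stays nonnegative. I would handle this by a monotonicity argument: $\cos\phi$ is monotone in $r$ and hits $0$ exactly at $r_2$. That gives the quantity $|\cos\phi|=|c\,I_{r_0}(r)-\cos\phi_0|$, which on the interval lies in $(0,1]$.
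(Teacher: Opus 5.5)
Your proposal is correct and is essentially the paper's proof. You integrate the separable equation \eqref{phi'm=n} with $n=2$, $\alpha=1$ (equivalently \eqref{IntegralSolution-Parallel}) to get $\cos\phi_0-\cos\phi=c\,I_{r_0}(r)$, and then $\tan^2\phi=\cos^{-2}\phi-1$ together with $s'=\tan\phi$ yields \eqref{Analitic-alpha1n2}. Your sign and interval bookkeeping (positive root up to the reflection of Remark~\ref{remark-symmetries-reflection-orientation}, and monotonicity of $|\cos\phi|$ from $|\cos\phi_0|$ down to $0$ on the stated intervals) fills in details the paper leaves implicit; the side claim that the graph asymptotes the cylinder $r=r_2$ is not needed for the statement, and would require more than blow-up of the integrand, e.g.\ $\cos\phi\sim c\,\frac{\xi}{\xi'}(r_2)\,(r_2-r)$, which makes $\tan\phi$ non-integrable at $r_2$.
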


\noindent \textit{Proof}. By (\ref{IntegralSolution-Parallel}) and using $\sec^2x = 1 + \tan^2x$ identity, we have 
\begin{align*}
    -\frac{1}{c}(\cos\phi - \cos\phi_0) = I_{r_0}(r) = \int_{r_0}^{r} \frac{\xi(\rho)}{\xi^\prime(\rho)} \, d\rho.
    \end{align*}
    Therefore
    \begin{align*}
    \frac{ds}{dr} = \tan\phi = \pm\sqrt{\left[c\int_{r_0}^{r}\frac{\xi(\rho)}{\xi^\prime(\rho)} \, d\rho - \cos\phi_0 \right]^{-2}-1}.
\end{align*}
Integrating this last expression yields (\ref{Analitic-alpha1n2}).  \hfill $\square$



\bibliography{sn-bibliography.bib}

\end{document}